\documentclass[11pt]{article}

\usepackage{amsmath, amssymb, amsthm}
\usepackage{comment}

\usepackage{tikz}
\usetikzlibrary{arrows}

\theoremstyle{plain}
\newtheorem{thm}{Theorem}[section]
\newtheorem*{thm*}{Theorem}
\newtheorem{prop}[thm]{Proposition}
\newtheorem{lem}[thm]{Lemma}
\newtheorem*{lem*}{Lemma}
\newtheorem{defn}[thm]{Definition}

\newtheorem{conj}[thm]{Conjecture}

\oddsidemargin  0pt
\evensidemargin 0pt
\marginparwidth 40pt
\marginparsep 10pt
\topmargin 0pt
\headsep 10pt
\textheight 8.8in
\textwidth 6.6in

\date{}
\title{\vspace{-0.85cm}Compatible Hamilton cycles in Dirac graphs}

\author{
Michael Krivelevich \thanks{School of Mathematical Sciences, Raymond and Beverly Sackler Faculty of Exact Sciences, Tel Aviv University, Tel Aviv,
69978, Israel. krivelev@post.tau.ac.il.
Research supported in part by USA-Israel BSF Grant 2010115 and by grant 912/12 from the Israel Science Foundation.}
\and
Choongbum Lee \thanks{Department of Mathematics,
MIT, Cambridge, MA 02139-4307. Email: cb\_lee@math.mit.edu.
Research supported in part by NSF Grant DMS-1362326.}
\and
Benny Sudakov \thanks{Department of Mathematics, ETH, 8092 Zurich, Switzerland. Email: benjamin.sudakov@math.ethz.ch. Research supported in part by SNSF grant 200021-149111 and by a USA-Israel BSF grant.}
}

\begin{document}

\maketitle

\begin{abstract}
A graph is Hamiltonian if it contains a cycle passing through every vertex
exactly once. A celebrated theorem of Dirac from 1952 asserts that every graph on
$n\ge 3$ vertices with minimum degree at least $n/2$ is Hamiltonian. We refer to such graphs as Dirac graphs.
In this paper we obtain the following strengthening of this result.
Given a graph $G=(V,E)$, an {\em incompatibility system} $\mathcal{F}$ over $G$ is a family $\mathcal{F}=\{F_v\}_{v\in V}$ such that for every $v\in V$, the set $F_v$ is a family of unordered pairs $F_v \subseteq \{\{e,e'\}: e\ne e'\in E, e\cap e'=\{v\}\}$.
An incompatibility system is {\em $\Delta$-bounded} if for every vertex $v$
and an edge $e$ incident to $v$, there are at most $\Delta$ pairs in $F_v$
containing $e$.
We say that a cycle $C$ in $G$ is {\em compatible} with $\mathcal{F}$
if every pair of incident edges $e,e'$ of $C$ satisfies $\{e,e'\} \notin F_v$, where $v=e\cap e'$. This notion is partly motivated by
a concept of transition systems defined by Kotzig in 1968, and can be viewed as a quantitative measure of
robustness of graph properties.
We prove that there is a constant $\mu>0$ such that
for every $\mu n$-bounded incompatibility system $\mathcal{F}$ over a Dirac graph $G$,
there exists a Hamilton cycle compatible with $\mathcal{F}$.
This settles in a very strong form, a conjecture of
H\"{a}ggkvist from 1988.
\end{abstract}

\section{Introduction}

A \emph{Hamilton cycle} in a graph $G$ is a cycle passing through each vertex of
$G$, and a graph is \emph{Hamiltonian} if it contains
a Hamilton cycle. Hamiltonicity, named after Sir Rowan Hamilton who studied it in the 1850s, is a very important
and extensively studied concept in graph theory.
The study of Hamiltonicity has mainly been concerned with looking for simple sufficient conditions
implying Hamiltonicity. One of the most important results in this direction
is Dirac's theorem asserting that every $n$-vertex graph, $n\ge 3$, of minimum
degree at least $\frac{n}{2}$ contains a Hamilton cycle.  In this context,
we define a {\em Dirac graph} as an $n$-vertex graph of minimum
degree at least $\frac{n}{2}$.
Note that the bound $\frac{n}{2}$ is tight,
as can be seen by the following two examples:
first is a graph obtained by taking two vertex-disjoint complete
graphs of order $k$ and identifying one vertex from each of them,
and second is the complete bipartite graph with parts of sizes
$k$ and $k-1$. Both graphs have $2k-1$ vertices and minimum
degree $k-1$, but are not Hamiltonian.


Recently there has been an increasing interest in studying
robustness of graph properties, aiming to strengthen classical results
in extremal and probabilistic combinatorics.
For example, consider the Hamiltonicity property of Dirac graphs.
Then one can ask, ``How strongly
do Dirac graphs possess the property of being Hamiltonian?''.
There are several ways to answer this question using
different measures of robustness. For example, one can try to show
that a Dirac graph has many Hamilton cycles or that Maker can win a
Hamiltonicity game played on edges of a Dirac graph.
These extensions and other similar questions have been answered in \cite{CuKa}
for the number of Hamilton cycles, in \cite{ChKuOs} and \cite{FeKrSu} for
the number of edge-disjoint Hamilton cycles, in \cite{Heinig} for
the cycle space generated by Hamilton cycles, and in \cite{KrLeSu14} for
Hamiltonicity of random subgraphs and for the Maker-Breaker games on Dirac graphs. Also, very recently,
a number of related important problems on regular Dirac graphs, such
as the existence of decomposition of its edge set into
Hamilton cycles, have been settled in a series
of papers starting from \cite{KuLoOsTr}, using a structural result
proved in \cite{KuLoOsSt}. In fact, the study of robustness
of graph properties can be identified as one of the central themes in
extremal graph theory, and its scope extends far beyond
Hamiltonicity and Dirac graphs (see, e.g., \cite{SuVu}).

\medskip

In this paper, we are interested in yet another type of robustness, and study
the robustness of Hamiltonicity of Dirac graphs with respect to a new measure.

\begin{defn}
Let $G=(V,E)$ be a graph.

\vspace{-0.2cm}
\begin{itemize}
  \setlength{\itemsep}{0pt} \setlength{\parskip}{0pt}
  \setlength{\parsep}{0pt}
\item[(i)] An {\em incompatibility system} $\mathcal{F}$ over $G$ is a family $\mathcal{F}=\{F_v\}_{v\in V}$ such that for every $v \in V$, $F_v$ is a family of unordered pairs $F_v \subseteq \{\{e,e'\}: e\ne e'\in E, e\cap e'=\{v\}\}$.

\item[(ii)] If $\{e,e'\} \in F_v$ for some edges $e,e'$ and vertex $v$, then we
say that $e$ and $e'$ are \emph{incompatible} in $\mathcal{F}$.
Otherwise, they are \emph{compatible} in $\mathcal{F}$. A subgraph $H \subseteq G$
is \emph{compatible} in $\mathcal{F}$, if all its pairs of edges $e$ and $e'$ are compatible.
\item[(iii)]  For a positive integer $\Delta$, an incompatibility system $\mathcal{F}$
is {\em $\Delta$-bounded} if for each vertex $v \in V$ and an edge $e$ incident to $v$, there are at most $\Delta$ other edges $e'$ incident to $v$ that are
incompatible with $e$.
\end{itemize}
\end{defn}

The definition is motivated by two concepts in graph theory.
First, it generalizes \emph{transition systems} introduced by Kotzig \cite{Kotzig68}
in 1968, where a transition system is a $1$-bounded
incompatibility system. Kotzig's work was motivated by a problem of Nash-Williams on cycle covering
of Eulerian graphs (see, e.g. Section 8.7 of \cite{Bondy95}).

Incompatibility systems and compatible Hamiton cycles also generalize
the concept of properly colored Hamilton cycles in edge-colored graphs,
The problem of finding properly colored Hamilton cycles in edge-colored
graph was first introduced by Daykin \cite{Daykin}. He asked if there
exists a constant $\mu$ such that for large enough $n$,
there exists a properly colored Hamilton cycle in every
edge-coloring of a complete graph $K_{n}$ where each vertex has at
most $\mu n$ edges incident to it of the same color (we refer to
such coloring as a \emph{$\mu n$-bounded edge coloring}).
Daykin's question has been answered
independently by Bollob\'as and Erd\H{o}s \cite{BoEr76} with $\mu=1/69$,
and by Chen and Daykin \cite{ChDa} with $\mu=1/17$. Bollob\'as
and Erd\H{o}s further conjectured that all $(\lfloor \frac{n}{2}\rfloor-1)$-bounded
edge coloring of $K_{n}$ admits a properly colored Hamilton cycle.
After subsequent improvements by Shearer \cite{Shearer} and by Alon and
Gutin \cite{AlGu}, Lo \cite{Lo12} recently settled the conjecture asymptotically,
proving that for any positive $\varepsilon$, every $(\frac{1}{2}-\varepsilon)n$-bounded
edge coloring of $E(K_n)$ admits a properly colored Hamilton cycle.

Note that a $\mu n$-bounded edge coloring naturally defines a $\mu n$-bounded
incompatibility system, and thus the question mentioned above can
be considered as a special case of the problem of finding compatible
Hamilton cycles. However, in general, the restrictions introduced
by incompatibility systems need not come from an edge-coloring of graphs,
and thus results on properly colored Hamilton cycles
do not necessarily generalize easily to incompatibility systems.

\medskip{}

In this paper we study compatible Hamilton cycles in Dirac graphs.
Our work is motivated by the following conjecture of H\"aggkvist
from 1988 (see \cite[Conjecture 8.40]{Bondy95}).

\begin{conj} \label{conj:haggkvist}
Let $G$ be a Dirac graph. For every $1$-bounded incompatibility
system $\mathcal{F}$ over $G$, there exists a Hamilton
cycle compatible with $\mathcal{F}$.
\end{conj}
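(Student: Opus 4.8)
The plan is to prove the stronger, $\mu n$-bounded statement asserted in the abstract (which contains Conjecture~\ref{conj:haggkvist} as the case $\Delta=1$) by the absorbing method, made to coexist with incompatibilities. The guiding heuristic is that every place where a classical Hamiltonicity argument for Dirac graphs picks a vertex or an edge out of $\Theta(n)$ admissible options survives the perturbation, because a $\mu n$-bounded system forbids only an $O(\mu)$-fraction of the options incident to any vertex, and $1/2-O(\mu)>0$; so once $\mu$ is small enough there is always linear slack. The real content is in re-engineering each ingredient — connecting paths, absorber gadgets, rotations — so that the choices they rely on are genuinely among linearly many candidates, not among constantly many. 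I would reduce the theorem to three pieces: a connecting lemma, an absorbing path, and an almost-spanning compatible path, and then splice them together.

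\textbf{Step 1: a connecting lemma.} First I would show: if $\mu$ is a small enough absolute constant, then for any Dirac graph $G$ with a $\mu n$-bounded system $\mathcal{F}$, any two vertices $u\ne w$, any set $W$ with $|W|\le\varepsilon n$ and $u,w\notin W$, and any prescribed edges $f_u\ni u$, $f_w\ni w$, there are $\Omega(n)$ internally disjoint compatible $u$--$w$ paths of some fixed bounded length, avoiding $W$, whose first edge is compatible with $f_u$ at $u$ and whose last edge is compatible with $f_w$ at $w$. The proof is a greedy/counting argument: from $u$ one steps to an admissible neighbour (avoiding $W$, compatible with $f_u$; at least $n/2-\varepsilon n-\mu n-1$ choices), similarly from $w$, and one then joins the two stubs through one or two more vertices using the minimum-degree condition to guarantee enough common candidates; each internal vertex costs only an extra $O(\mu)n$ term for compatibility. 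Carrying the boundary edges $f_u,f_w$ is precisely what makes the lemma reusable for gluing many pieces together without creating incompatibilities at the seams.

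\textbf{Step 2: the absorbing path.} Using Step~1 I would build a compatible path $P_0$ with $|V(P_0)|=o(n)$ such that for every $U\subseteq V\setminus V(P_0)$ with $|U|$ below a small linear threshold there is a compatible path on exactly $V(P_0)\cup U$ with the same endpoints as $P_0$. Assemble $P_0$, via the connecting lemma, from $\Theta(\delta n)$ constant-size \emph{absorber gadgets}, each a short compatible path between two endpoints that remains a compatible path between the same endpoints after one absorbs an extra vertex, and show that every small leftover set can be routed into spare gadgets by a Hall-type matching, using that each gadget can absorb any one of linearly many vertices. A genuine subtlety, already present for $1$-bounded systems, is forced by the extremal examples: in $K_{n/2,n/2}$ a parity/balance count shows that \emph{no} single vertex can be inserted into a balanced path segment keeping both endpoints, so the gadgets must be of a type that respects the near-bipartite structure — for instance absorbing vertices in suitably balanced batches — or one splits off the near-extremal graphs and handles them by a separate, hands-on argument; getting this case distinction clean is one of the two main hurdles.

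\textbf{Step 3: almost-spanning path, assembly, and the crux.} After deleting $V(P_0)$ (keeping aside a controlled neighbourhood of its ends) the remaining graph $G'$ is still ``almost Dirac'', with minimum degree $\ge|V(G')|/2-o(n)$, and inherits a $\mu n$-bounded system; I would show such a graph has a compatible path covering all but $o(n)$ of its vertices, by a compatible adaptation of the rotation--extension proof of Dirac's theorem (a longest compatible path has $\ge(1/2-o(1))n$ admissible endpoint-neighbours on it, hence is long; Pósa rotations — each swapping one edge at the moving endpoint and one at the pivot — keep the reachable-endpoint set of linear size; then extend or close into a long cycle and mop up via Step~1). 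Finally splice $P_0$ onto the ends of the almost-spanning path, absorb the $o(n)$ leftover vertices (together with the set-aside ones) into $P_0$, and close up into a compatible Hamilton cycle. The hardest point, beyond the extremal analysis, is a failure mode hidden in both the absorbers and the rotations: an incompatible pair may sit not on a freely chosen edge but on an edge the operation is \emph{forced} to delete or keep (the edge $v_{i-1}v_i$ at a rotation pivot $v_i$, or a path-edge at an absorber's attachment point). The adversary cannot block all such operations, since at each vertex it spends only a bounded part of its $\mu n$ budget, but exploiting this requires always carrying two independent ways to perform each step (both orientations of a rotation, two candidate attachment edges per gadget) and proving the adversary cannot simultaneously kill both for more than an $O(\mu)$-fraction of the candidates; making this quantitative while keeping every error term below the $\mu n$ threshold is the crux of the argument.
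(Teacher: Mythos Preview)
Your proposal outlines a genuinely different strategy from the paper's. The paper does not use the absorbing method at all; it pushes rotation--extension all the way to a full Hamilton cycle. The central device is a class of \emph{smooth} paths: compatible paths whose endpoints are $8\sqrt{\mu}$-good (few bad pivots along the path) and $\sqrt{\mu}$-uncorrelated (few common vertices at which the two endpoint-edges are incompatible). The key lemma shows that rotating a smooth path produces at least $d(v_0)-14\sqrt{\mu}\,n$ new smooth paths, so the class is closed under rotation with only an $O(\sqrt{\mu})n$ loss; a structural dichotomy then says that either every smooth path can be closed into a long compatible cycle, or $G$ contains two sets $A,B$ of size $(\tfrac12-O(\sqrt{\mu}))n$ with $e(A,B)=O(\sqrt{\mu})n^2$. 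The extremal case is then handled directly, splitting into near-two-cliques and near-complete-bipartite configurations.

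Your absorbing route is plausible but does not escape this same case split: as you already note, single-vertex absorbers do not exist in $K_{n/2,n/2}$ (the neighbourhood of any vertex is independent), so you would still have to peel off the near-bipartite and near-two-cliques graphs and treat them separately. More importantly, your Step~3 is not free. The failure mode you flag at the end --- that a rotation may be blocked not at the new edge but at the forced edge $\{v_{i-1},v_i\}$ at the pivot, or that the new endpoint may itself be ``bad'' for the next round --- is exactly what the smooth-path bookkeeping is designed to control, and a careful execution of your Step~3 would essentially reconstruct the paper's Section~2. What absorbing would buy you is a cleaner endgame (swallowing the last $o(n)$ vertices without the delicate iterated cycle-enlargement the paper performs), at the price of constructing and analysing compatible absorber gadgets that respect the extremal structure. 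The paper takes the opposite trade: no absorbers, but a more intricate argument to push the cycle from length $(1-o(1))n$ to $n$.
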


Here, we settle this conjecture, in fact in a very strong form.

\begin{thm} \label{thm:main}
There exists a constant $\mu$ such that the following holds
for large enough $n$. For every $n$-vertex Dirac graph $G$
and a $\mu n$-bounded incompatibility system $\mathcal{F}$ defined
over $G$, there exists a Hamilton cycle in $G$ compatible with $\mathcal{F}$.
\end{thm}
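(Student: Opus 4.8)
The plan is to combine the \emph{absorbing method} with a Pósa-type rotation--extension argument adapted to compatible paths. Fix constants $\mu \ll \beta \ll \gamma \ll 1$, with $\mu$ chosen small enough that all the error terms below can be absorbed. The engine of the whole argument is the following claim: \emph{if $P=v_0v_1\cdots v_\ell$ is a longest compatible path in a graph $G$ of linear minimum degree carrying a $\mu n$-bounded incompatibility system, then from $P$ one can reach, by sequences of rotations that keep $v_\ell$ fixed, an endpoint set of size $\Omega(n)$; moreover there are $\Omega(n^2)$ pairs $(x,y)$ of reachable endpoints for which $G$ contains a compatible $x$--$y$ path on $V(P)$ closing up to a compatible cycle.} The proof is the standard Pósa double-counting: a rotation of $P$ that adds the chord $v_0v_i$ and deletes $v_{i-1}v_i$ keeps the path compatible unless $v_0v_i$ is incompatible with $v_0v_1$ at $v_0$ (at most $\mu n$ bad choices of $i$) or with $v_iv_{i+1}$ at $v_i$, so if a set $S$ of reachable endpoints were closed under rotation then $|N_G(S)|$ would exceed $|S|$ by only $O(\mu n)$ per endpoint, and connectivity of $G$ forces $|S|=\Omega(n)$. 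Together with a short \emph{connecting lemma} for compatible paths (also a consequence of rotation--extension), this lets us link prescribed structures by compatible paths and prescribe the endpoints of an almost-spanning compatible path.

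Granting the engine, the three steps are routine. \textbf{Step 1 (absorbing path).} For each vertex $v$ I would exhibit $\Omega(n^2)$ \emph{absorbers}: compatible paths $x_1x_2x_3x_4$ with $x_2v,vx_3\in E$ such that $x_1x_2vx_3x_4$ is again compatible, so that $v$ may be inserted between $x_2$ and $x_3$ without changing endpoints or violating compatibility. A random family of $\beta n$ absorbers, pruned to be vertex-disjoint and then chained together by the connecting lemma, forms an absorbing path $P_{\mathrm{abs}}$ such that any set $W$ of at most $\beta^2 n$ vertices outside $P_{\mathrm{abs}}$ can be swallowed, yielding a compatible path on $V(P_{\mathrm{abs}})\cup W$ with the same endpoints. \textbf{Step 2 (cover the rest).} Write $R=V\setminus V(P_{\mathrm{abs}})$. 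The restriction of $\mathcal F$ to $G[R]$ is still $\mu n$-bounded and $G[R]$ has minimum degree $(\tfrac12-o(1))n$, so the almost-spanning version of the rotation--extension lemma gives a compatible path in $G[R]$ missing fewer than $\beta^2 n$ vertices; using the $\Omega(n^2)$ closing pairs we arrange its two endpoints to be compatibly adjacent to the two ends of $P_{\mathrm{abs}}$. \textbf{Step 3 (finish).} Join the two paths into a compatible cycle through $V(P_{\mathrm{abs}})\cup R$ minus the leftover set $W$, and absorb $W$ using $P_{\mathrm{abs}}$; the result is a compatible Hamilton cycle.

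I expect the real difficulty to lie in two places. The first is the rotation lemma itself: the incompatibility obstructing a rotation at the \emph{far} end of the chord $v_0v_i$ involves the successor edge $v_iv_{i+1}$, whose identity changes with $i$, so this family of obstructions is not bounded by a single invocation of $\mu n$-boundedness. I would deal with this by analyzing an entire sequence of rotations at once --- charging each incompatibility to the edge it actually forbids rather than to a candidate endpoint, or, equivalently, working in an auxiliary digraph whose vertices are admissible path-endpoints and bounding its in/out-degree deficit --- so that over the whole rotation process the total loss is still $O(\mu n^2)$ and the Pósa inequality survives. The second is the near-bipartite extremal case: when $G$ is close to $K_{n/2,n/2}$ the neighbourhood of a vertex is essentially independent, so the simple absorbers of Step~1 do not exist. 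I would handle such $G$ separately, using its approximate bipartition $(A,B)$: build crossing absorbers that alternate between $A$ and $B$, run the rotation--extension argument while tracking the $A/B$ balance of the path, and reduce to an almost-balanced bipartite Hamiltonicity statement; for all $G$ that are $\gamma$-far from bipartite one shows directly that neighbourhoods span $\Omega(n^2)$ edges, so the generic argument applies.
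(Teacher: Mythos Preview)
Your outline takes a genuinely different route from the paper: you propose an absorbing-method framework, whereas the paper uses no absorption at all. It works purely by rotation--extension on a restricted class of paths (``smooth'' paths), proves a dichotomy (either every smooth path can be grown into a long compatible cycle, or $G$ has two large sets $A,B$ with $e(A,B)=o(n^2)$), and then handles the two extremal configurations (near two cliques, near complete bipartite) by separate direct arguments. So the structural split you anticipate at the end is indeed present, but it carries the \emph{entire} proof rather than just patching the absorber step.

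There are two genuine gaps in your plan. The first is the rotation engine. You correctly isolate the obstacle: the far-end incompatibility between $\{v_0,v_i\}$ and $\{v_i,v_{i+1}\}$ is not controlled by one application of $\mu n$-boundedness, and indeed a \emph{single} endpoint $v_0$ can have every rotation blocked this way inside a $1$-bounded system. So your per-endpoint loss is not $O(\mu n)$, and the global $O(\mu n^2)$ charging you sketch does not go through as stated: once you rotate, the successor edge of a vertex changes, so the same pair $(v_0,v_i)$ can be re-examined against a different $\{v_i,v_{i+1}\}$, and the count of ``forbidden edges'' is not tied to a fixed set of $n$ path edges. The paper's fix is a real idea you are missing: call $v_0$ \emph{good} for $P$ if it has $o(n)$ bad pivots in $P$, show by double counting that all but $O(\sqrt{\mu}\,n)$ vertices are good for any given path, and then maintain the invariant that both endpoints stay good (and, crucially, \emph{uncorrelated}, so that the fixed endpoint does not accumulate new bad pivots as the path changes). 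Without something equivalent to this smooth-path invariant, the P\'osa inequality does not survive.

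The second gap is that Step~2 is essentially circular. ``The almost-spanning version of the rotation--extension lemma gives a compatible path in $G[R]$ missing fewer than $\beta^2 n$ vertices'' is precisely what the whole paper is devoted to proving; rotation--extension alone only gives a compatible cycle of length a bit above $n/2$, and pushing it to $n-o(n)$ is where the extremal analysis enters. In particular, when $G$ is close to two cliques meeting in few vertices, $G[R]$ after removing your absorbing path can be essentially disconnected, and no path covers more than half of it. You flag the near-bipartite case but not this one, and neither the absorbers nor Step~2 survive it without a dedicated argument. The paper's structure --- dichotomy first, then bespoke proofs of Hamilton-connectedness in almost-complete and almost-complete-bipartite pieces --- is exactly what is needed to close these cases; an absorbing approach would have to replicate that work inside Step~2 anyway.
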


Our theorem shows that Dirac graphs are very robust against
incompatibility systems, i.e., one can find a Hamilton cycle even after
forbidding a quadratic number of pairs of edges incident to each vertex
from being used together in the cycle.
The order of magnitude is clearly best possible since we can simply
forbid all pairs incident to some vertex from being used together
to disallow a compatible Hamilton cycle.
However, it is not clear what the best possible value of $\mu$
should be. Our proof shows the existence of a positive constant $\mu$ (approximately
$10^{-16}$ although no serious attempt has been made to optimize
the constant), and a variant of a construction of
Bollob\'as and Erd\H{o}s \cite{BoEr76} shows that
$\mu \le \frac{1}{4}$. See Section \ref{sec:conclusion} for
further discussion.

\medskip

The rest of the paper is organized as follows.
In Section \ref{sec:rotation_extension}, we discuss and
develop our main tool, based on P\'osa's rotation-extension technique.
The proof of Theorem \ref{thm:main} consists of several
cases, and will be given in Sections \ref{sec:proof_main}
and \ref{sec:proof_sub}. In Section \ref{sec:conclusion},
we conclude this paper with some remarks.

\medskip

\noindent \textbf{Notation.} A graph $G = (V,E)$ is given by a
pair of its vertex set $V = V(G)$ and edge set $E = E(G)$.
For a set $X$, let $e(X)$ be the number of edges whose both
endpoints are in $X$.
For a pair $X,Y$ of sets of vertices, we define $e(X,Y)$ as the
number of edges with one endpoint in $X$ and the other in $Y$
counted with multiplicity. Hence, $e(X,X) = 2e(X)$.

For a path $P$ (or a cycle $C$), we let $|P|$ (or $|C|$, respectively) be the
number of vertices in the path (cycle, respectively). Also, we define the length
of a path, or a cycle, as its number of edges.
Throughout the paper, we assume that the number of vertices $n$ is
large enough, and omit floor and ceiling signs whenever these are
not crucial.

\section{Rotation-extension technique and its modification}
\label{sec:rotation_extension}

\subsection{P\'osa's rotation-extension technique}

Our main tool in proving Theorem \ref{thm:main}
is P\'osa's rotation-extension technique,
which first appeared in \cite{Posa} (see also \cite[Ch. 10, Problem 20]{Lovasz}).

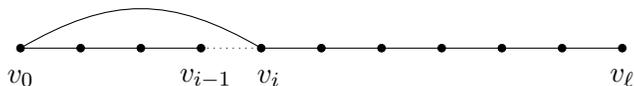
\begin{figure}[h!]
  \centering
  \begin{tikzpicture}

\def\xendtwo{8 cm}
\def\yshifttwo{2.2 cm}

   \foreach \x in {0.0,0.8,...,8.8} {
            \draw [fill=black] (\x cm, \yshifttwo) circle (0.5mm);
        }

   \foreach \x in {0.0,0.8,1.6} {
            \draw (\x cm,\yshifttwo) -- (\x cm + 0.8cm,\yshifttwo);
        }
   \draw [dotted] (2.4 cm,\yshifttwo) -- (3.2cm,\yshifttwo);

   \foreach \x in {3.2,4.0,...,8.0} {
            \draw (\x cm,\yshifttwo) -- (\x cm + 0.8cm,\yshifttwo);
        }
\

    \draw (0, \yshifttwo) .. controls (1.2cm,\yshifttwo + 0.7cm)
            and (2.0cm,\yshifttwo + 0.7cm) .. (3.2cm, \yshifttwo);
    \draw (3.3cm, \yshifttwo - 0.4cm) node {$v_{i}$};
    \draw (2.46cm, \yshifttwo - 0.4cm) node {$v_{i-1}$};

    \draw (0.0cm, \yshifttwo - 0.4cm) node {$v_{0}$};
    \draw (8.0cm, \yshifttwo - 0.4cm) node {$v_{\ell}$};

\end{tikzpicture}
  \caption{Rotating a path.}
  \label{fig_rotation}
\end{figure}

Let $G$ be a connected graph and let $P = (v_0, \ldots, v_\ell)$ be
a path on some subset of vertices of $G$. Consider the neighbors of $v_0$.
If there exists an edge $\{v_0, w\}$ for $w \notin V(P)$, then
we can \emph{extend} $P$ to find a longer path $(w, v_0, \ldots, v_\ell)$.
Otherwise, if $\{v_0, v_\ell\}$ is an edge of the graph, then
we can use it to close $P$ into a cycle, in which case
since $G$ is connected, we either get a Hamilton cycle, or
can find a  path longer than $P$.

Now assume that we cannot directly extend $P$ as above, and that
$G$ contains an edge $\{v_0, v_i\}$ for some $i$.
Then $P' = (v_{i-1}, \ldots, v_0, v_i, v_{i+1}, \ldots, v_{\ell})$
forms another path of length $\ell$ in $G$ (see Figure
\ref{fig_rotation}). We say that $P'$ is obtained from $P$ by a
\emph{rotation} with {\em fixed endpoint} $v_\ell$, \emph{pivot point}
$v_i$, and {\em broken edge} $(v_{i-1}, v_{i})$. Note that after
performing this rotation, we can now close a cycle of length $\ell$
also using the edge $\{v_{i-1}, v_{\ell}\}$ if it exists in $G \cup P$.
As we perform more and more rotations, we will get more such
candidate edges (call them \emph{closing edges}). The
rotation-extension technique is employed by repeatedly rotating the
path until one can extend the path, or find a closing edge
in the graph to find a cycle (in which case we either find a Hamilton cycle,
or can proceed further by finding a longer path).

Let $P''$ be a path obtained from $P$ by several rounds of
rotations. An important observation that we will use later is that
for every interval $I = (v_j, \ldots, v_k)$ of vertices of $P$ ($1
\le j < k \le \ell$), if no edges of $I$ were broken during these
rotations, then $I$ appears in $P''$ either exactly as it does in
$P$, or in the reversed order. We define the {\em orientation}, or
{\em direction}, of a path $P''$ with respect to an interval $I$ to
be {\em positive} in the former situation, and {\em negative} in the
latter situation.

\subsection{The class of smoothly compatible paths}

There are several difficulties in naively applying the rotation-extension
technique to find compatible Hamilton cycles.
Suppose that we are given a graph $G$ and an incompatibility system
$\mathcal{F}$ over $G$. Let $P=(v_{0},v_{1},\ldots,v_{\ell})$
be a path compatible with $\mathcal{F}$.
First of all, even if $v_{0}$ has a neighbor outside $P$, we might
not be able to extend the path $P$ to a longer path, since
the edge connecting $v_{0}$ to its neighbor outside $P$ can be
incompatible with the edge $\{v_0, v_1\}$.
Second, even if all the neighbors of $v_{0}$ are
in $P$, we might not be able to perform a single round of rotation
since the pair of edges $\{v_{0},v_{i}\}$ and $\{v_{i},v_{i+1}\}$
can be incompatible for each $i$.

Note, however, that the first problem is less of a issue, since
we can find a longer path compatible with $\mathcal{F}$ as long as
$v_0$ has greater than $\mu n$ neighbors outside $P$.
The second problem is more serious, and to overcome
the difficulty, we consider only a special type of paths.
First, we identify the problematic vertices with respect to a given path.

\begin{defn}
Let $\gamma$ be a positive real.
Let $P=(v_{0},v_{1},\ldots,v_{\ell})$ be a path
and $w\in V$ be a vertex ($w$ need not be in $V(P)$).
\vspace{-0.2cm}
\begin{itemize}
  \setlength{\itemsep}{0pt} \setlength{\parskip}{0pt}
  \setlength{\parsep}{0pt}
\item[(i)] A vertex $v_{i}\in V(P)$ is a \emph{bad neighbor} of $w$ \emph{in
$P$}, if $\{w, v_i\}$ is incompatible with $\{v_i, v_{i-1}\}$
or $\{v_i, v_{i+1}\}$. A vertex $v_{i} \in V(P)$ is a \emph{good neighbor} of $w$ \emph{in $P$} otherwise.
\item[(ii)] $w$ is \emph{$\gamma$-bad }for $P$ if there are at least $\gamma|P|$
bad neighbors of $w$ in $P$, and $w$ is {\em $\gamma$-good} for $P$
otherwise.
\item[(iii)] Similarly define {\em bad neighbors}, \emph{$\gamma$-bad} and \emph{$\gamma$-good}
vertices with respect to cycles.
\end{itemize}
\end{defn}

The definition above was made with the intention of forcing
both endpoints of the path to be $\gamma$-good (for an appropriately
chosen $\gamma$), throughout the process of rotation and extension, hoping
to resolve the latter problem mentioned above.
Indeed, suppose that $v_{0}$ is $\gamma$-good for $P$. Then by the rotation-extension
technique, all but at most $\gamma|P|$ of the neighbors of $v_{0}$ in $P$
can be used as a pivot point to give another
path $P'$ compatible with $\mathcal{F}$. This (weakly) resolves
the issue mentioned above regarding the rotation-extension
technique. Indeed, we can perform one round of rotation,
but afterwards, we have no
guarantee that the two new endpoints are good with respect to the
new path $P'$ (the new endpoint might be a bad vertex, or
the number of bad neighbors of the fixed endpoint
might have increased).
In order to resolve this issue, we make the following definition.

\begin{defn}
Let $\gamma$ be a positive real.
Two vertices $v_{1}$ and $v_{2}$ are {\em $\gamma$-correlated}
if there are at least $\gamma n$ vertices $w$ such that
the pair of edges $\{v_1, w\}$ and $\{v_2, w\}$ are incompatible.
We say that $v_{1}$ and $v_{2}$ are {\em $\gamma$-uncorrelated}
if they are not $\gamma$-correlated.
\end{defn}

Note that two vertices being $\gamma$-correlated is a global
condition; it does not depend on individual paths.
We now define a special type of paths and cycles
by utilizing the two concepts defined above.

\begin{defn}
Let $G$ be an $n$-vertex graph with a $\mu n$-bounded
incompatibility system $\mathcal{F}$.
A path $P=(v_{0},v_{1},\cdots,v_{\ell})$ is \emph{smoothly compatible
with $\mathcal{F}$} (or \emph{smooth} in short if $\mathcal{F}$ is
clear from the context) if
\vspace{-0.2cm}
\begin{itemize}
  \setlength{\itemsep}{0pt} \setlength{\parskip}{0pt}
  \setlength{\parsep}{0pt}
\item[(i)] $P$ is compatible with $\mathcal{F}$,
\item[(ii)] both endpoints $v_{0}$ and $v_{\ell}$ are $8\sqrt{\mu}$-good
for $P$,
\item[(iii)] the pair of vertices $v_{0}$ and $v_{\ell}$ is $\sqrt{\mu}$-uncorrelated.
\end{itemize}
\end{defn}

Note that we made the two endpoints to be $8\sqrt{\mu}$-good,
instead of making them $\sqrt{\mu}$-good.
This choice was made for technical reasons, and will be crucial
later.
We conclude this subsection with a proposition asserting that
good vertices and uncorrelated pairs are abundant, thus
showing that the definition of smooth paths is
not too restrictive.

\begin{prop} \label{prop:bound_bad_correlated}
Let $G$ be an $n$-vertex graph with a $\mu n$-bounded
incompatibility system $\mathcal{F}$.
\vspace{-0.2cm}
\begin{itemize}
  \setlength{\itemsep}{0pt} \setlength{\parskip}{0pt}
  \setlength{\parsep}{0pt}
\item[(i)] For every path $P$, there are at most $\sqrt{\mu}n$
vertices that are $2\sqrt{\mu}$-bad for $P$.
The same statement holds for a cycle $C$.
\item[(ii)]For every vertex $v$, there are at most $\sqrt{\mu}n$
vertices that are $\sqrt{\mu}$-correlated with $v$.
\end{itemize}
\end{prop}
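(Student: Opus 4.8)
The plan is to deduce both statements from the $\mu n$-boundedness of $\mathcal{F}$ by a short double counting argument.

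For part (i), fix a path $P=(v_0,\dots,v_\ell)$ and let $S$ be the number of ordered pairs $(w,v_i)$ with $w\in V$ and $v_i$ a bad neighbor of $w$ in $P$. Counting by the second coordinate: each $v_i$ is incident to at most two edges of $P$, say $e$ and $e'$, and by $\mu n$-boundedness at most $\mu n$ edges incident to $v_i$ are incompatible with $e$ and at most $\mu n$ with $e'$; hence $v_i$ is a bad neighbor of at most $2\mu n$ vertices $w$, so $S\le 2\mu n\,|P|$. Counting by the first coordinate: every vertex that is $2\sqrt\mu$-bad for $P$ contributes at least $2\sqrt\mu\,|P|$ to $S$. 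Comparing the two bounds yields that there are at most $\sqrt\mu\, n$ such vertices. The same argument works verbatim (in fact slightly more cleanly, since every vertex of a cycle meets exactly two of its edges) to give the statement for a cycle $C$.

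For part (ii), fix a vertex $v$ and let $T$ be the number of ordered pairs $(w,u)$ with $u\ne v$, $w\notin\{u,v\}$, and $\{v,w\}$ incompatible with $\{u,w\}$, i.e.\ $\{\{v,w\},\{u,w\}\}\in F_w$. Counting by $w$: there are at most $n$ neighbors $w$ of $v$, and for each one, $\mu n$-boundedness applied to the edge $\{v,w\}$ at the vertex $w$ bounds the number of admissible $u$ by $\mu n$, so $T\le \mu n^2$. Counting by $u$: every vertex $u$ that is $\sqrt\mu$-correlated with $v$ contributes at least $\sqrt\mu\, n$ to $T$, directly from the definition. Comparing the two bounds gives at most $\sqrt\mu\, n$ vertices $\sqrt\mu$-correlated with $v$.

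I do not anticipate a genuine obstacle here: the proposition is an elementary averaging fact, and the only point requiring care is bookkeeping, namely remembering that a pair of edges is declared incompatible at their common endpoint, so that the boundedness hypothesis is invoked at the correct vertex in each of the two counts.
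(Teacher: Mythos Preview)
Your proposal is correct and follows essentially the same double-counting argument as the paper: for (i) one counts pairs $(w,v_i)$ with $v_i$ a bad neighbor of $w$, bounding above via the at-most-two path edges at each $v_i$ and below via the defining threshold $2\sqrt{\mu}|P|$; for (ii) one counts pairs $(u,w)$ with $\{v,w\}$ and $\{u,w\}$ incompatible, bounding above by applying $\mu n$-boundedness at $w$ and below by the correlation threshold. The paper's proof is identical in substance.
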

\begin{proof}
(i) We prove the claim only for paths as the claim for cycles follows from
the same proof. Let $P=(v_{0},v_{1},\cdots,v_{\ell})$. We count the
number of edges of the form $\{w,v_{i}\}$ such that
$v_{i}$ is a bad neighbor of $w$ in $P$ in two different ways.

First, for a fixed vertex $v_{i}$, since $\mathcal{F}$ is
$\mu n$-bounded, there are at most $2\mu n$
vertices $w$ for which $v_{i}$ is a bad neighbor of $w$;
at most $\mu n$ vertices for which $\{w, v_i\}$ is incompatible with $\{v_i, v_{i+1}\}$,
and at most $\mu n$ vertices for which $\{w, v_i\}$ is incompatible with
$\{v_i, v_{i-1}\}$.
Hence there are at most $|P|\cdot2\mu n$ such edges.
Second, if we define $N$ as the number of vertices with at
least $2\sqrt{\mu}|P|$ bad neighbors in $P$, then by definition,
the number of such edges is at least $N\cdot 2\sqrt{\mu}|P|$.
By combining the two bounds, we see that $N\le \sqrt{\mu}n$.

\medskip
\noindent (ii) Let $M$ be the number of vertices that are $\sqrt{\mu}$-correlated
with $v$. We count the number of pairs of vertices $v', w$ such that
$\{v,w\}$ and $\{v',w\}$ are incompatible
in two different ways. On one hand, each vertex that is $\sqrt{\mu}$-correlated
with $v$ gives at least $\sqrt{\mu}n$ such pairs, and thus
the number of pairs is at least $M\cdot\sqrt{\mu}n$. On the
other hand, for each fixed vertex $w$, there are at most $\mu n$
vertices $v'$ such that $\{v,w\}$ and $\{v',w\}$ are incompatible.
Hence the number of pairs is at most $n\cdot \mu n$.
Therefore $M\le\sqrt{\mu}n$.
\end{proof}

\subsection{Rotating smooth paths}

Our first lemma shows how the rotation part of P\'osa's rotation-extension
technique extends to the class of smooth paths.
This lemma is the most important building block of our proof.

\begin{lem} \label{lem:rotation_1}
Let $\mu$ be a positive real satisfying $\mu \le \frac{1}{225}$.
Let $G$ be an $n$-vertex graph of minimum degree at least $15\sqrt{\mu}n$
with a $\mu n$-bounded incompatibility system $\mathcal{F}$.
Suppose that $P=(v_0, v_1, \ldots,v_\ell)$ is a smooth path in $G$,
where there is no vertex $w \notin V(P)$ for which $(w, v_0, \ldots, v_\ell)$
is a smooth path. Then there exists a subset $X$ of the set of good neighbors of $v_0$ in $P$,
of size at least
\[ |X| \ge d(v_0)-14\sqrt{\mu}n \]
such that for every vertex $v_{i}\in X$, the path
$(v_{i-1},\ldots,v_1,v_{0},v_{i},v_{i+1},\ldots,v_{\ell})$ is smooth.
\end{lem}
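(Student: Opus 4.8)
The plan is to produce the set $X$ by starting from the set of good neighbors of $v_0$ in $P$ and then throwing away every vertex whose use as a pivot point would violate one of the three defining conditions of a smooth path for the rotated path $P' = (v_{i-1},\ldots,v_0,v_i,\ldots,v_\ell)$. Note first that for a good neighbor $v_i$ of $v_0$, the path $P'$ is automatically compatible with $\mathcal{F}$: the only new pair of incident edges in $P'$ (compared to $P$) is $\{v_0,v_i\}$ together with $\{v_0,v_1\}$, and this pair is compatible precisely because $v_i$ is a good neighbor. So condition (i) is free. The endpoint $v_\ell$ is unchanged, so it remains $8\sqrt\mu$-good with respect to $P'$ as long as the set of its bad neighbors does not grow — but rotating only breaks the edge $\{v_{i-1},v_i\}$ and adds the edge $\{v_0,v_i\}$, so the only vertex whose ``badness status'' relative to $v_\ell$ can change is $v_i$ itself (and $v_{i-1}$, whose pair of path-edges changes). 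Hence $v_\ell$ stays $8\sqrt\mu$-good for $P'$ except possibly for the (harmless) adjustment by $O(1)$ in the count; I would absorb this by noting $|P'|=|P|$ and that the threshold $8\sqrt\mu|P|$ leaves ample slack. The real work is in conditions (ii) and (iii) applied to the \emph{new} endpoint $v_{i-1}$.

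For condition (ii) I need $v_{i-1}$ to be $8\sqrt\mu$-good for $P'$. By Proposition~2.8(i), at most $2\sqrt\mu n$ vertices are $2\sqrt\mu$-bad for $P$; discard these from the good-neighbor set. If $v_{i-1}$ is \emph{not} $2\sqrt\mu$-bad for $P$ — i.e.\ it is $2\sqrt\mu$-good for $P$ — then it has fewer than $2\sqrt\mu|P|$ bad neighbors in $P$, and passing from $P$ to $P'$ changes the incident path-edge at only the two vertices $v_i$ and $v_{i-1}$ (one edge broken, one added at $v_0$), so the number of bad neighbors of $v_{i-1}$ in $P'$ is at most $2\sqrt\mu|P| + 2 \le 8\sqrt\mu|P'|$, which is why the definition of smooth was set up with the generous factor $8$ rather than with $\sqrt\mu$. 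So condition (ii) costs me only the $2\sqrt\mu n$ discarded vertices. Wait — I should double-check that I also need the \emph{fixed} endpoint condition to survive, but since $v_\ell$ was already $8\sqrt\mu$-good and the perturbation is $O(1)$, the same slack argument covers it; I would state both together.

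For condition (iii) I need the pair $\{v_0, v_{i-1}\}$ — these are the two endpoints of $P'$ — to be $\sqrt\mu$-uncorrelated. By Proposition~2.8(ii), at most $\sqrt\mu n$ vertices are $\sqrt\mu$-correlated with $v_0$; discard the indices $i$ for which $v_{i-1}$ lies in this set. Combining the three sources of loss: we start with the good neighbors of $v_0$ in $P$; since $v_0$ is $8\sqrt\mu$-good for $P$ (as $P$ is smooth), there are at most $8\sqrt\mu|P| \le 8\sqrt\mu n$ bad neighbors, so at least $d(v_0) - 8\sqrt\mu n$ good neighbors (here I also use that $v_0$ has no neighbor outside $P$, which follows from the extension-impossibility hypothesis together with the fact that $G$ has minimum degree at least $15\sqrt\mu n$ — any outside neighbor $w$ would give a good neighbor structure allowing a smooth extension, contradicting the hypothesis; this needs a short separate argument showing that if $v_0$ had many outside neighbors one of them would yield a smooth extension, but the hypothesis rules out \emph{any} smooth extension, so I must be a bit careful and perhaps only conclude that the number of outside neighbors is small, i.e.\ at most $2\sqrt\mu n$, and fold that loss in too). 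Then subtract at most $2\sqrt\mu n$ for the $2\sqrt\mu$-bad vertices (condition (ii)), at most $\sqrt\mu n$ for the $\sqrt\mu$-correlated vertices (condition (iii)), and at most $2\sqrt\mu n$ for possible outside neighbors of $v_0$, plus a negligible $O(1)$. This totals a loss of at most $14\sqrt\mu n$ beyond $d(v_0)$, matching the claimed bound $|X| \ge d(v_0) - 14\sqrt\mu n$. Throughout, the minimum-degree hypothesis $15\sqrt\mu n$ and $\mu \le 1/225$ guarantee $X$ is non-empty and that all the thresholds are consistent.

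The main obstacle I anticipate is the careful bookkeeping around the fixed endpoint $v_\ell$ and the new endpoint $v_{i-1}$: one must verify precisely which vertices' compatibility relationships change under a single rotation (only those incident to the one broken edge and the one added edge), and check that the factor-$8$ slack in the definition of ``smooth'' genuinely absorbs both the $2\sqrt\mu \to 8\sqrt\mu$ jump and the $O(1)$ additive perturbation — this is the technical point the remark after the definition of smooth paths is flagging. A secondary subtlety is justifying that $v_0$ has few neighbors outside $P$; this should follow from the non-extendability hypothesis, but the hypothesis only forbids extensions to \emph{smooth} paths, so one needs the argument that an outside neighbor $w$ compatible at $v_0$ together with $v_0$ being good and uncorrelated with $w$ would give a smooth extension — and then bound the number of ``bad'' or ``correlated'' outside neighbors using Proposition~2.8 again.
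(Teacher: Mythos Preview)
Your overall architecture is right, but there are two genuine gaps, one of which is the crux of the lemma.

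\textbf{Gap 1: wrong pair for condition (iii).} The endpoints of the rotated path $P'=(v_{i-1},\ldots,v_0,v_i,\ldots,v_\ell)$ are $v_{i-1}$ and $v_\ell$, not $v_{i-1}$ and $v_0$ as you write. So the uncorrelation you must secure is between $v_{i-1}$ and $v_\ell$; you should discard (the successors of) the at most $\sqrt{\mu}n$ vertices $\sqrt{\mu}$-correlated with $v_\ell$, not with $v_0$. The numerics happen to be the same, but as written you are verifying the wrong condition.

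\textbf{Gap 2: there is no slack at the fixed endpoint $v_\ell$.} You write that the threshold $8\sqrt{\mu}|P|$ ``leaves ample slack'' to absorb an $O(1)$ increase in the bad-neighbor count of $v_\ell$. It does not: $v_\ell$ is only assumed to be $8\sqrt{\mu}$-good, so it may already sit one vertex below the threshold, and a single new bad neighbor would destroy smoothness. (The slack in the definition is for the \emph{new} endpoint $v_{i-1}$, which you correctly upgrade from $2\sqrt{\mu}$-good to $8\sqrt{\mu}$-good; there is none left for $v_\ell$.) The paper deals with this by ensuring that the set of bad neighbors of $v_\ell$ in $P'$ is a \emph{subset} of that in $P$. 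Concretely, it discards pivot vertices $v_i$ for which $\{v_0,v_i\}$ is incompatible with $\{v_\ell,v_i\}$ or with $\{v_0,v_\ell\}$; the first condition forbids at most $\sqrt{\mu}n$ vertices precisely because $v_0$ and $v_\ell$ are $\sqrt{\mu}$-uncorrelated (this is where condition (iii) of the \emph{current} path $P$ is actually used), and the second forbids at most $\mu n$. With these extra exclusions the vertices $v_0$ and $v_i$ can never become new bad neighbors of $v_\ell$, so its count does not increase at all.

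\textbf{A smaller slip on compatibility.} There are two new incident pairs in $P'$: $\big(\{v_0,v_1\},\{v_0,v_i\}\big)$ at $v_0$ and $\big(\{v_0,v_i\},\{v_i,v_{i+1}\}\big)$ at $v_i$. The good-neighbor condition handles the pair at $v_i$, not (as you wrote) the pair at $v_0$; the latter requires separately discarding the at most $\mu n$ vertices $w$ with $\{v_0,w\}$ incompatible with $\{v_0,v_1\}$. This is cheap but must be said. Once you make these three corrections, your accounting matches the paper's and the $14\sqrt{\mu}n$ bound goes through.
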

\begin{proof}
Let $V = V(G)$. Define
\begin{eqnarray*}
B_{1} & = & \Big\{ w\,:\, \{v_0, w\} \mbox{ is incompatible with } \{v_0, v_1\}, \{v_\ell, w\},  \mbox{ or } \{v_0, v_{\ell}\} \mbox{ (if exists) } \Big\}, \mbox{ and}\\
B_{2} & = & \Big\{ w\,:\, w\mbox{ is }2\sqrt{\mu}\mbox{-bad for \ensuremath{P}}\Big\} \cup \Big\{ w\,:\, w\mbox{ is }\sqrt{\mu}\mbox{-correlated with \ensuremath{v_{\ell}}}\Big\}.
\end{eqnarray*}
We have $|B_1| \le 2 \mu n + \sqrt{\mu}n$ since $\mathcal{F}$ is $\mu n$-bounded, and
the pair of vertices $v_0, v_\ell$ are $\sqrt{\mu}$-uncorrelated.
We have $|B_2| \le 2\sqrt{\mu} n$ by Proposition \ref{prop:bound_bad_correlated}.

Suppose that $v_{0}$ has a neighbor $w$ in $V\setminus(V(P)\cup B_{1}\cup B_{2})$.
We claim that the path $P'=(w,v_{0},v_{1},\cdots,v_{\ell})$ is smooth.
First, the path $P'$ is compatible since $w\notin B_{1}$ implies that
the pair of edges $\{w,v_0\}$ and $\{v_0,v_1\}$ is compatible.
Second, since $w\notin B_{2}$, we see that $w$ and $v_{\ell}$
are $\sqrt{\mu}$-uncorrelated. It remains to show that $w$
and $v_{\ell}$ are both $8\sqrt{\mu}$-good for $P'$. For $w$,
a vertex $v_{i}$ is a bad neighbor of $w$ in $P'$
if and only if it is in $P$. Hence by the fact that
$w\in B_{2}$, we see that $w$ is $2\sqrt{\mu}$-good for $P'$.
For the other endpoint $v_{\ell}$, its
set of bad neighbors in $P$ can be different from that in $P'$
in at most two vertices $v_0$ and $w$. However, since $w \in B_1$, 
$w$ cannot be a bad neighbor of $v_\ell$ in $P'$, and
$v_0$ can be a bad neighbor of $v_\ell$ in $P'$ if and only if it were
in $P$. Hence $P'$ is a smooth path, contradicting
our assumption.
This shows that all the neighbors of $v_{0}$ are in $V(P)\cup B_{1}\cup B_{2}$.

Further define
\begin{eqnarray*}
B_{3} & = & \Big\{ w \in V(P) \,:\, w \,\text{ is a bad neighbor of }v_{0}\,\mbox{ in}\, P\Big\}.
\end{eqnarray*}
We have $|B_3| \le 8\sqrt{\mu} n$, since $v_0$ is $8\sqrt{\mu}$-good for $P$.
Define $B_2^{+} = \{v_{i+1} \,|\, v_i \in B_2 \cap V(P)\}$,
and $X=\Big(N(v_{0})\cap V(P)\Big)\setminus(B_{1}\cup B_{2}^{+}\cup B_{3})$.
Since all neighbors of $v_0$ are in $V(P) \cup B_1 \cup B_2$, we have
\begin{align*}
	|X|
	\ge\,&
	|N(v_{0})|- |B_1| - 2|B_2| - |B_3| \\
	\ge\,&
	|N(v_0)| - 13\sqrt{\mu}n - 2\mu n
	\ge
	d(v_0) - 14\sqrt{\mu}n\,.
\end{align*}
We prove that
$X$ is the set claimed in the statement of the lemma.
It suffices to prove that for every $v_{i}\in X$,
the path $P''=(v_{i-1},\cdots,v_1, v_{0},v_{i},v_{i+1},\cdots,v_{\ell})$
is smooth. First, to check compatibility, we need to check the
two pairs of edges $\Big(\{v_0, v_1\}, \{v_0, v_i\}\Big)$
and $\Big(\{v_0, v_i\}, \{v_i, v_{i+1}\}\Big)$. The first pair
is compatible since $v_i \notin B_1$, and the second pair
is compatible since $v_i \notin B_3$.
Second, since $v_{i}\notin B_{2}^{+}$,
we see that $v_{i-1}\notin B_{2}$, and thus the pair of vertices $v_{i-1}$
and $v_{\ell}$ are $\sqrt{\mu}$-uncorrelated. It remains to show
that the two endpoints of $P''$ are both $8\sqrt{\mu}$-good for $P''$.

Note that for each vertex, the set of its bad neighbors
in $P''$ can be different from that in $P$ in at most two vertices,
$v_0$ and $v_i$. Since $v_{i}\notin B_{2}^{+}$,
we see that $v_{i-1}\notin B_{2}$, and thus $v_{i-1}$ has less than
$2\sqrt{\mu}|P|$ bad neighbors in $P$. This proves
that $v_{i-1}$ has less than $2\sqrt{\mu}|P|+2$ bad neighbors
in $P''$; thus $v_{i-1}$ is $8\sqrt{\mu}$-good
for $P''$  (we used the fact $|P| \ge |X| \ge \sqrt{\mu}n$
following from the given minimum degree condition).
For the other endpoint $v_{\ell}$,
consider its relation with the two vertices
$v_0$ and $v_i$. Since $v_i \notin B_1$, the pairs
of edges $\Big(\{v_{\ell},v_{i}\}, \{v_{0},v_{i}\}\Big)$
and $\Big(\{v_{\ell}, v_0\}, \{v_0, v_i\}\Big)$
are compatible (note that $\{v_\ell, v_i\}$
and $\{v_\ell, v_0\}$ may not be edges of $G$, in which
case there is no need to consider the compatibility of pairs
involving them).
Therefore, the set of bad neighbors of $v_{\ell}$
in $P''$ is a subset of that in $P$. This proves our claim
that $P''$ is smooth.
\end{proof}

\section{Proof of Theorem \ref{thm:main}} \label{sec:proof_main}

We prove Theorem \ref{thm:main} in two steps.
In the first step, we show that the given graph
contains a compatible Hamilton cycle unless it
has a special structure.

\begin{thm} \label{thm:sub_1}
Let $\mu$ be a positive real satisfying $\sqrt{\mu} < \frac{1}{400}$.
Let $G$ be an $n$-vertex graph of minimum degree at least
$\frac{n}{2}$ with a $\mu n$-bounded incompatibility system $\mathcal{F}$.
Then at least one of the following holds:
\vspace{-0.2cm}
\begin{itemize}
  \setlength{\itemsep}{0pt} \setlength{\parskip}{0pt}
  \setlength{\parsep}{0pt}
\item[(i)] There exists a Hamilton cycle compatible with $\mathcal{F}$, or
\item[(ii)] there exist two sets $A$ and $B$
of sizes $|A|,|B| \ge (\frac{1}{2}-200\sqrt{\mu})n$ such that $e(A,B) \le 16\sqrt{\mu} n^2$.
\end{itemize}
\end{thm}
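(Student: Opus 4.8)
The plan is to run P\'osa's rotation--extension machinery entirely inside the class of smooth paths, using Lemma \ref{lem:rotation_1} as the engine, and to show that if we ever get stuck then the vertex set must split into two large parts with very few edges between them. First I would check that the class of smooth paths is nonempty: since good vertices and $\sqrt{\mu}$-correlated pairs are rare (Proposition \ref{prop:bound_bad_correlated}), and $G$ has minimum degree $n/2 \gg 15\sqrt\mu n$, even a single edge $\{u,w\}$ whose endpoints are not $8\sqrt\mu$-bad for it and not $\sqrt\mu$-correlated is a smooth path; such an edge exists by a simple counting argument (delete the $O(\sqrt\mu n)$ bad vertices and $O(\sqrt\mu n)$ partners of any fixed vertex). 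Now take a smooth path $P$ that cannot be extended by prepending a vertex to keep smoothness; the hypothesis of Lemma \ref{lem:rotation_1} is satisfied (after possibly enlarging $P$ first), so we obtain a large set $X_0$ of pivots, each giving a new smooth path with the same fixed endpoint $v_\ell$ and a new endpoint $v_{i-1}$.

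The core of the argument is the standard P\'osa iteration adapted to smoothness. Starting from $P$, I would repeatedly rotate while keeping one endpoint fixed: after applying Lemma \ref{lem:rotation_1}, for each new endpoint obtained we again apply the lemma (each resulting path is smooth, hence a legitimate input), collecting after a bounded number of rounds a set $S$ of endpoints reachable by smooth rotations, all sharing the fixed endpoint $v_\ell$ or its rotated images. The key quantitative claim — the usual P\'osa lemma in this setting — is that either $|S|$ is large (linear in $n$), or $S$ together with its boundary forms a small separating structure. Concretely, if at every stage the endpoint set $T$ we have built satisfies $|N(T) \setminus (\text{rotation-image of } V(P))|$ small, then since each endpoint $v$ in $T$ has $d(v) \ge n/2$ and (by Lemma \ref{lem:rotation_1}) all but $14\sqrt\mu n$ of its neighbors that lie on $P$ become, after one rotation, new smooth endpoints, the set $T$ roughly triples (or at least expands) at each step unless it stops growing — and if it stops growing, almost all neighbors of almost all vertices of $T$ lie in a bounded set. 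This is exactly where the structural alternative (ii) will come from.

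To extract (ii): suppose the rotation process yields, for some smooth path of length $\ell$, a set $A$ of at least $(\tfrac12 - 200\sqrt\mu)n$ possible endpoints on one side and, symmetrically (fixing the other endpoint and rotating), a set $B$ of at least $(\tfrac12 - 200\sqrt\mu)n$ possible endpoints on the other side. If some $a \in A$ and $b \in B$ could be the two endpoints of a common smooth path of length $\ell$ and $\{a,b\} \in E(G)$ with $\{a,b\}$ compatible with the two terminal edges, we close up a smooth cycle on $\ell+1$ vertices; if $\ell + 1 < n$ we use connectivity to extract a longer smooth path (this needs a short lemma, analogous to the classical one, that a smooth cycle on fewer than $n$ vertices in a graph of large minimum degree yields a longer smooth path — provable by finding a compatible chord from the cycle to an outside vertex, which exists because each outside vertex has $\ge n/2$ neighbors and only $O(\sqrt\mu n)$ of the cycle's vertices are bad). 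Iterating, we reach a smooth Hamilton path, and then one more rotation round plus the closing argument gives a compatible Hamilton cycle, which is alternative (i). Thus if (i) fails, for every $a \in A$ there are at most $O(\sqrt\mu n)$ vertices $b \in B$ with $\{a,b\}$ a usable (compatible) edge, plus at most $\mu n + O(\sqrt\mu n)$ "bad" exceptions coming from incompatibility or correlation; summing over $a \in A$ gives $e(A,B) \le 16\sqrt\mu n^2$, which is (ii). The main obstacle is making the P\'osa expansion bookkeeping precise in the smooth setting: one must track that the "forbidden" set (already-broken edges, bad vertices, correlated partners) stays of size $O(\sqrt\mu n)$ across all rotation rounds so that the $d(v_0) - 14\sqrt\mu n$ guarantee of Lemma \ref{lem:rotation_1} keeps the endpoint set genuinely expanding, and to handle the case where the path is short separately (where $|P| \ge \sqrt\mu n$ is used) versus where it is already nearly spanning.
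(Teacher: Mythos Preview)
Your outline captures the broad rotation--extension strategy, but there are two genuine gaps where the argument as written would fail.

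\medskip

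\textbf{The structural extraction is not right.} You take $A$ and $B$ to be endpoint sets obtained by rotating from the two ends, and then argue that if (i) fails, $e(A,B)$ is small because an edge $\{a,b\}$ with $a \in A$, $b \in B$ would close a cycle. But $a \in A$ and $b \in B$ only means there is \emph{some} smooth path ending at $a$ and \emph{some} (possibly different) smooth path ending at $b$; it does not give a single smooth path with endpoints $a$ and $b$. So an edge $\{a,b\}$ need not close anything, and the bound $e(A,B)\le 16\sqrt{\mu}n^2$ does not follow. In the paper's argument the two sets are \emph{not} both endpoint sets: one takes $A=X^-$ (shifts of pivots of $v_0$) and $B=V\setminus(S^-\cup S^+)$ where $S$ is the set of endpoints reachable in at most three rotations. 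The point is that each vertex of $X^-$ has almost all its neighbours in $S^-\cup S^+$, so $e(X^-,V\setminus(S^-\cup S^+))$ is small. For this to yield (ii) one needs $|S^-\cup S^+|\le (\tfrac12+200\sqrt\mu)n$, and that is the real content: a priori $|S^-\cup S^+|$ could be close to $2|S|\approx n$. The paper proves this via a separate Claim, by exhibiting a single vertex $w$ reachable as an endpoint through two rotation sequences that traverse a long interval $I_1$ in \emph{opposite} directions; pivoting at any good neighbour of $w$ in $I_1$ then produces endpoints in $S$ on \emph{both} sides, forcing $|S^-\cap S^+|$ to be large. Your sketch (``the set $T$ roughly triples \dots\ unless it stops growing'') does not contain this idea and cannot recover the bound on $|S^-\cup S^+|$.

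\medskip

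\textbf{Absorbing the last vertices is more delicate than you suggest.} Your ``short lemma'' that a compatible cycle on fewer than $n$ vertices yields a longer smooth path by attaching an outside vertex works only when you can choose the outside vertex $z$ to be $2\sqrt\mu$-good for $C$. Once $|C|\ge (1-\sqrt\mu)n$ there may be no such $z$ left outside, and then the path $P$ you build through $z$ need not be compatible at the two junctions, let alone smooth. The paper handles this by temporarily deleting the two possibly-incompatible pairs from $\mathcal{F}$, applying the rotation lemma to the modified system, and then removing the offending edges one at a time to restore compatibility with the original $\mathcal{F}$; this uses the quantitative control $|E(C')\setminus E(P)|\le 3|V(C')\setminus V(P)|+4$ to keep the endpoints $8\sqrt\mu$-good throughout. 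Your sketch does not address this regime at all.
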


In the second step, we show that there exists a
compatible Hamilton cycle even in the case when the graph
has a special structure.

\begin{thm} \label{thm:sub_2}
Let $\mu, \nu$ and $\eta$ be reals satisfying
$110\nu + 250\eta + 10\sqrt{\mu} < \frac{1}{2000}$.
Let $G$ be an $n$-vertex graph of minimum degree at least
$\frac{n}{2}$ with a $\mu n$-bounded incompatibility system $\mathcal{F}$,
and suppose that there are two sets $A,B$ of sizes
$|A|, |B| \ge (\frac{1}{2}-\nu)n$ such that $e(A,B) \le \eta n^2$.
Then $G$ contains a Hamilton cycle compatible with $\mathcal{F}$.
\end{thm}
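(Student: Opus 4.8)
The plan is to handle the "extremal" case, where $G$ is close to a disjoint union of two cliques or to a complete bipartite graph, by explicitly building a compatible Hamilton cycle rather than by brute rotation-extension on all of $G$. The first step is to regularize the bipartition: starting from $A,B$ with $e(A,B)\le \eta n^2$, I would pass to a partition $V(G)=V_1\cup V_2$ with $|V_1|,|V_2|$ as close to $n/2$ as the structure allows, chosen to (almost) minimize $e(V_1,V_2)$; the minimum-degree condition forces $e(V_1,V_2)$ to remain $O(\eta n^2)$ and forces each $V_i$ to have size $(\tfrac12\pm O(\nu+\eta))n$ and to be internally very dense (every vertex has at most $O((\nu+\eta)n)$ non-neighbours inside its own part, except possibly a small "bad" set of vertices whose neighbourhoods straddle the cut). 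A convenient way to say this: all but $O(\sqrt{\eta}\,n)$ vertices $v$ have $d_{V_i}(v)\ge (\tfrac12-O(\sqrt{\eta}))n$ where $V_i$ is their own side. I will also need to distinguish the two sub-cases — "two cliques" ($|V_1|+|V_2|=n$ with very few cross edges, so a Hamilton cycle must cross the cut exactly twice or zero times, hence must use $V_1,V_2$ of sizes forcing at least a couple of cross edges) versus "complete bipartite" ($e(V_1)$ and $e(V_2)$ both small) — since the connection pattern of the final cycle differs; the hypothesis $e(A,B)\le\eta n^2$ does not by itself pin down which we are in, so I would split on whether $e(V_1)+e(V_2)$ is large or small.

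Within each sub-case the strategy is the same three-part recipe. (1) Absorb the few exceptional vertices: greedily find a short compatible path (or a constant number of vertex-disjoint short compatible paths) that covers all the $O(\sqrt{\eta}\,n)$ vertices that are bad for the bipartition, using that each such vertex still has linearly many neighbours and that, by Proposition \ref{prop:bound_bad_correlated}, $\gamma$-bad vertices and $\gamma$-correlated pairs are rare, so the greedy choices avoid incompatibilities. (2) Cover almost all of each dense side by a long compatible path: inside $V_i$ the induced incompatibility system is still $\mu n$-bounded and the induced graph is essentially complete, so I can apply the rotation-extension machinery of Section \ref{sec:rotation_extension} — in particular Lemma \ref{lem:rotation_1}, whose hypotheses (minimum degree $\ge 15\sqrt\mu n$ inside the part, $\mu\le 1/225$) are satisfied since $|V_i|\approx n/2\gg 15\sqrt\mu n$ — to grow a smooth compatible path through all but $o(n)$ vertices of $V_i$, with endpoints that are $8\sqrt\mu$-good and $\sqrt\mu$-uncorrelated. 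Because almost every vertex of $V_i$ is available as a good pivot, standard P\'osa booster arguments (rotate to generate $\Omega(n)$ distinct endpoints on each side, then close an edge or extend) let me route the endpoints wherever I need them. (3) Splice: connect the long side-paths and the short absorbing paths into one Hamilton cycle, using a constant number of carefully chosen cross edges (in the bipartite case, alternating between the sides the whole way). At each of the finitely many splice points I only need the two joining edges to be compatible, and since at any vertex only $\mu n$ edges are forbidden against a given edge, and the prospective endpoints can be chosen from linearly large, pairwise-uncorrelated good sets, a greedy/counting argument produces compatible splices; the arithmetic $110\nu+250\eta+10\sqrt\mu<\tfrac1{2000}$ is exactly what keeps all these error terms under the thresholds needed for rotation-extension and for the splicing counts.

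The main obstacle, I expect, is the bipartite-like sub-case: here every edge of the Hamilton cycle crosses the cut, so one cannot hide incompatibilities inside a dense clique — every rotation and every extension step must respect both the bipartite constraint and compatibility simultaneously, and one must also control the parity/size balance $|V_1|=|V_2|$ (or $|V_1|=|V_2|\pm 1$ handled by a single detour through an exceptional vertex). The fix is to run the rotation-extension argument on the "bipartite square" — track the path one side at a time and rotate using the dense-side structure — while insisting throughout that both current endpoints be $8\sqrt\mu$-good and $\sqrt\mu$-uncorrelated, so that Lemma \ref{lem:rotation_1} (adapted to the bipartite setting, which only costs another additive $O(\nu+\eta)n$ in the degree bookkeeping) keeps applying; the "two cliques" sub-case is comparatively easy because there one genuinely has two nearly-complete graphs to work in and only needs to cross the cut twice. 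A secondary nuisance is ensuring the constantly-many short absorbing paths and splice edges are mutually compatible and vertex-disjoint, but since there are only $O(1)$ of them this is a finite greedy selection against $\mu n$-bounded obstructions and poses no real difficulty once (1)–(3) are set up.
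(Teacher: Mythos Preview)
Your overall architecture matches the paper's: split into a ``two cliques'' sub-case and an ``almost bipartite'' sub-case, regularize the partition, and then work inside each dense piece with rotation--extension while hooking the pieces together compatibly. The paper's case split is a bit cleaner than yours --- it branches on whether $|A\cap B|$ is tiny or nearly $n/2$, which immediately tells you which extremal picture you are in --- and instead of an ad hoc absorb/cover/splice scheme it reduces each sub-case to a standalone black box (a Hamilton cycle through a prescribed edge in an almost-complete graph, and a Hamilton cycle through a prescribed set of matching edges in an almost-complete balanced bipartite graph), each proved by its own tailored rotation--extension. That modularization is what makes the arithmetic in $110\nu+250\eta+10\sqrt{\mu}<\tfrac{1}{2000}$ trackable.

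There is, however, a real gap in your bipartite sub-case. You write that ``every edge of the Hamilton cycle crosses the cut'' and that the size balance is ``$|V_1|=|V_2|$ (or $|V_1|=|V_2|\pm1$ handled by a single detour).'' But after regularization the two sides have sizes $(\tfrac12\pm O(\nu+\eta))n$, so the imbalance $t=|V_1|-|V_2|$ can be of order $(\nu+\eta)n$, i.e.\ \emph{linear} in $n$, not $O(1)$. A Hamilton cycle must then use exactly $t$ edges inside the larger side, and you have to choose these $t$ edges \emph{and} weave them into the bipartite backbone while preserving compatibility at $2t$ junctions simultaneously --- a single detour will not do. The paper handles this by (i) finding $t$ vertex-disjoint edges $e_1,\dots,e_t$ inside the larger side (using that the minimum degree forces each vertex there to have $\ge t/2$ same-side neighbours, while the maximum same-side degree is bounded), (ii) building a perfect matching between a transversal of these edges and the smaller side that is compatible with the $e_i$'s at the shared endpoints, and (iii) contracting each $e_i$-plus-matching-edge path of length $2$ to a single edge to obtain a genuinely balanced bipartite auxiliary graph, in which one now needs a compatible Hamilton cycle containing $t$ prescribed matching edges. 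Step (iii) is what your proposal is missing: the auxiliary problem is not ``find a compatible Hamilton cycle in an almost-$K_{m,m}$'' but ``find one that contains $\gamma m$ specified matching edges,'' and the rotation--extension has to be run relative to that constraint (hence the paper's notion of \emph{proper} paths). Without this balancing mechanism, your plan does not produce a Hamilton cycle in the bipartite-like case.
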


The two theorems imply Theorem \ref{thm:main} with $\mu = 10^{-16}$.

\subsection{Step I : Theorem \ref{thm:sub_1}}

In the following lemma, we prove, by utilizing Lemma \ref{lem:rotation_1},
that every smooth path can be closed into a compatible cycle, after several rounds
of rotations and extensions.
We consider graphs that have minimum degree close to
$\frac{n}{2}$, but not necessarily at least $\frac{n}{2}$.
This extra flexibility will be useful for our later
application.

\begin{lem} \label{lem:rotation_2}
Let $\alpha$ and $\mu$ be non-negative reals satisfying
$\alpha \le \sqrt{\mu} \le \frac{1}{400}$.
Let $G$ be an $n$-vertex graph of minimum degree at least
$\left(\frac{1}{2}-\alpha\right)n$ with a $\mu n$-bounded
incompatibility system $\mathcal{F}$. Then at least one of the
following holds:
\vspace{-0.2cm}
\begin{itemize}
  \setlength{\itemsep}{0pt} \setlength{\parskip}{0pt}
  \setlength{\parsep}{0pt}
\item[(i)] For every smooth path $P$, there exists a compatible cycle $C$
of length $|C|\ge(\frac{1}{2}+20\sqrt{\mu})n$ for which $V(P) \subseteq V(C)$,
and $|E(C) \setminus E(P)| \le 3|V(C) \setminus V(P)| + 4$.
\item[(ii)] There exist two sets $A$ and $B$
of sizes $|A|,|B| \ge (\frac{1}{2}-200\sqrt{\mu})n$ such that $e(A,B) \le 15\sqrt{\mu}n^2$.
\end{itemize}
\end{lem}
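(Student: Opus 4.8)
The plan is to run P\'osa's rotation-extension machinery on smooth paths, using Lemma \ref{lem:rotation_1} as the engine for a single rotation, and to track the set of endpoints reachable by iterated rotations. Fix a smooth path $P_0$ and, to aim for a contradiction with (i), assume we can never extend it to a longer smooth path. Starting from $P_0$, repeatedly perform rotations keeping one endpoint $v_\ell$ fixed: by Lemma \ref{lem:rotation_1}, from any smooth path with free endpoint $u$ there are at least $d(u)-14\sqrt{\mu}n \ge (\tfrac12-\alpha-14\sqrt{\mu})n$ choices of pivot giving a new smooth path, each breaking exactly one edge of the current path and changing the free endpoint. Let $S$ be the set of vertices that occur as a free endpoint of some smooth path (with fixed endpoint $v_\ell$) obtained from $P_0$ by a sequence of rotations in which only edges of $P_0$ are broken. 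The standard P\'osa argument then shows that $|N(S) \setminus S|$ is small: if $w \in N(s) \setminus S$ along the relevant path then, after closing up the rotation at $s$, the edge $\{s,w\}$ would have to be non-usable, which (up to the boundedly many broken edges, and the $O(\sqrt\mu n)$ bad/correlated/incompatible vertices accounted for in the proof of Lemma \ref{lem:rotation_1}) forces $w$ to lie in a small exceptional set. Concretely one gets $|N(S) \setminus S| \le c\sqrt{\mu}n$ for an explicit constant $c$.

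Next I would feed this back into the degree condition. Each $s \in S$ has $d(s) \ge (\tfrac12-\alpha)n$ neighbors, and all but $\le c\sqrt{\mu}n$ of them lie in $S$, so $|S| \ge (\tfrac12-\alpha-c\sqrt\mu)n \ge (\tfrac12 - 200\sqrt\mu)n$ (using $\alpha \le \sqrt\mu$ and $\sqrt\mu \le 1/400$ to absorb constants). Now symmetrize: for each smooth path $P_s$ with free endpoint $s\in S$, repeat the whole construction rotating at the \emph{other} end of $P_s$ while keeping $s$ fixed, obtaining for each $s$ a set $T_s$ of reachable second endpoints with $|T_s| \ge (\tfrac12-200\sqrt\mu)n$ and $|N(T_s)\setminus T_s|$ small. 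If for some $s$ there is an edge of $G$ between $S$ and $T_s$ joining two endpoints of a common smooth path, that closing edge gives a compatible cycle $C$ on $V(P_0)$; then either $C$ is already long enough, or — since the minimum degree is $\ge(\tfrac12-\alpha)n$ and hence $G$ has no sparse cut of the forbidden type unless (ii) holds — some vertex of $C$ has a neighbor outside $V(C)$, letting us splice in new vertices via short detours and lengthen the path, contradicting maximality; one iterates until $|V(C)| \ge (\tfrac12+20\sqrt\mu)n$, and the bookkeeping on broken edges gives $|E(C)\setminus E(P)| \le 3|V(C)\setminus V(P)|+4$ (each extension step adds a bounded number of vertices and at most thrice as many new edges, plus the single closing edge and a constant from the final closing). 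If instead \emph{no} such closing edge exists for any $s$, then taking $A = S$ and $B = T_s$ for a suitable $s$, essentially all edges incident to $A$ land inside a small neighborhood, so $e(A,B) \le 15\sqrt\mu n^2$, which is alternative (ii).

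The main obstacle is the careful amortized accounting in the extension phase: one must show that whenever a compatible cycle $C$ is produced that is still too short, the absence of outcome (ii) guarantees an edge leaving $V(C)$ that can actually be used (is compatible with the incident cycle edges, and lands on a $\sqrt\mu$-good, uncorrelated vertex so that the resulting longer path is again \emph{smooth}, not merely compatible), and that reinstating smoothness costs only a bounded number of extra broken edges per vertex added. Maintaining the invariant "smooth" rather than just "compatible" throughout rotations \emph{and} extensions — so that Lemma \ref{lem:rotation_1} remains applicable — together with propagating the $3|V(C)\setminus V(P)|+4$ edge bound through all of this, is the delicate part; the $8\sqrt\mu$-good slack built into the definition of smoothness (versus the $2\sqrt\mu$-bad threshold in Proposition \ref{prop:bound_bad_correlated}) is exactly what absorbs the $O(1)$ perturbations at each step, and I expect the proof to be organized so that this slack is never exhausted.
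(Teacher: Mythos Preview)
Your outline has a genuine gap at the heart of the argument. You write that ``the standard P\'osa argument then shows that $|N(S)\setminus S|$ is small,'' and then deduce $|S|\ge(\tfrac12-200\sqrt\mu)n$. This step does not go through in the compatible setting, and the paper does \emph{not} proceed this way.

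The paper restricts to at most \emph{three} rounds of rotation: $S$ is the set of endpoints reachable from $v_0$ (with $v_\ell$ fixed) in $\le 3$ rotations, each producing a smooth path. With only a bounded number of broken edges, pivoting at a good neighbour $v_j$ yields $v_{j-1}$ or $v_{j+1}$ as the new endpoint (along the \emph{original} $P$), so after two rotations one gets, for each first-round endpoint $v_{i-1}\in X^-$, that almost all of $N(v_{i-1})$ lies in $S^-\cup S^+$. This gives $e(X^-,\,V\setminus(S^-\cup S^+))\le 15\sqrt\mu\,n^2$, and the candidate for alternative~(ii) is $A=X^-$, $B=V\setminus(S^-\cup S^+)$. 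The difficulty is showing $|B|$ is large, i.e.\ $|S^-\cup S^+|\le(\tfrac12+200\sqrt\mu)n$. A priori $|S^-\cup S^+|$ could be as large as $2|S|\approx n$; the paper proves a separate structural \textbf{Claim} that when $|S|<(\tfrac12+20\sqrt\mu)n$ one actually has $|S^-\cap S^+|$ large. The mechanism is an \emph{opposite-traversal} trick: one finds a single vertex $w$ and a long interval $I\subseteq V(P)$ together with two smooth $w$--$v_\ell$ paths, one traversing $I$ positively and one negatively. Pivoting at any good $v_i\in N(w)\cap I$ in the first path yields $v_{i-1}\in S$, in the second yields $v_{i+1}\in S$; hence most of $N(w)\cap I$ lies in $S^-\cap S^+$, and inclusion--exclusion bounds $|S^-\cup S^+|$. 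This is the key idea, and it is entirely absent from your plan.

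Your fallback (``take $A=S$, $B=T_s$'') is also not right: you give no reason why $e(S,T_s)$ should be small, and indeed in the paper the sparse pair is $(X^-,\,V\setminus(S^-\cup S^+))$, not two endpoint sets. Finally, your extension bookkeeping is on the wrong footing: the paper's $3|V(C)\setminus V(P)|+4$ bound comes simply from ``extend in $\le 2$ rotations ($\le 3$ new edges per new vertex) or close in $\le 3$ rotations ($\le 4$ new edges once)''---no cycle-splicing or detour argument is needed.
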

\begin{proof}
Let $G$ be a given graph for which (ii) does not hold.
Given an arbitrary smooth path $P=(v_{0},v_{1},\cdots,v_{\ell})$ in $G$,
we will either extend $P$ in at most two rotations,
or show that $|P| \ge (\frac{1}{2} + 20\sqrt{\mu})n$ 
and close $P$ into a cycle in at most three rotations. If the former event happens,
then repeat the above until the latter event happens. Since we use at most three 
new edges in the former case, and four new edges in the latter case, the final
cycle $C$ that we obtain will satisfy
\[
	|E(C) \setminus E(P)| \le 3|V(C) \setminus V(P)| + 4.
\]
This implies (i). To prove our claim, it suffices to 
assume that $P$ cannot be extended in at most two rotations,
and show that under this assumption,
it can be closed into a cycle in at most three rotations.
For a set $X \subseteq V(P)$,
define $X^+ = \{v_{i+1} \,|\, v_i \in X ,\, i\le \ell-1 \}$
and $X^- = \{v_{i-1} \,|\, v_i \in X ,\, i\ge 1 \}$.

Let $S$ be the subset of vertices of $V(P)$ having the following
property: for every $v_{i}\in S$, there exists a smooth path $P'$
of length $\ell$ between $v_{i}$ and $v_{\ell}$,
which is obtained from $P$ in at most three rounds of rotations and
starts with edge $\{v_i, v_{i-1}\}$ or $\{v_i, v_{i+1}\}$.
Note that if $v_{\ell-1} \in S$, then $v_\ell$ must have been used as
a pivot point. However, since $v_\ell$ is an endpoint of the path, this implies the existence of a cycle of length $V(P)$ obtained from $P$ by adding at most four edges. Thus we may assume that $v_{\ell-1} \notin S$, which in particular implies that for all paths $P'$ as above, the edge incident to $v_{\ell}$ is still $\{v_{\ell-1}, v_{\ell}\}$.

We prove the lemma by proving that
$|S| \ge (\frac{1}{2}+20\sqrt{\mu})n$. Assume for the moment that this bound holds. 
By Lemma~\ref{lem:rotation_1}, there exists a subset of the set of good neighbors of $v_\ell$ in $P$,
of size at least $\Big(\frac{1}{2} - \alpha\Big)n - 14\sqrt{\mu}n$.
Since
\[
	\Big(\frac{1}{2}+20\sqrt{\mu}\Big)n + \Big(\frac{1}{2} - \alpha - 14\sqrt{\mu}\Big)n > n,
\]
we can find a vertex $v_i \in S$ that is a good neighbor of $v_\ell$,
and is connected to $v_\ell$ by an edge compatible with $\{v_\ell, v_{\ell-1}\}$.
Since $v_i \in S$, by definition, there exists a smooth path $P_i$ from $v_i$ to $v_\ell$
obtained from $P$ whose edge incident to $v_i$
is $\{v_i, v_{i+1}\}$ or $\{v_i, v_{i-1}\}$. This shows that
we may use the edge $\{v_\ell, v_i\}$ to close $P_i$ into a
cycle $C$ compatible with $\mathcal{F}$. Note that
$|C| \ge |S| \ge (\frac{1}{2} + 20\sqrt{\mu})n$.

Hence it suffices to prove $|S|\ge(\frac{1}{2}+20\sqrt{\mu})n$. Assume
to the contrary that $|S|<(\frac{1}{2}+20\sqrt{\mu})n$.
We show that $S$ must have some specific structure under this
assumption.

\medskip

\noindent\textbf{Claim.} Suppose that $P$ cannot be extended in at most two
rotations. If $|S| < (\frac{1}{2} + 20\sqrt{\mu})n$, then
$|S^- \cup S^+| \le \left(\frac{1}{2}+200\sqrt{\mu}\right)n$.

\medskip

The proof of this claim will be given later.
By Lemma \ref{lem:rotation_1} and our assumption on $P$
not being extendable, there exists a set $X\subseteq N(v_{0})\cap V(P)$
of size at least $d(v_0)-14\sqrt{\mu}n$ such that for
every vertex $v_{i}\in X$, the path $(v_{i-1},\ldots,v_{0},v_{i},v_{i+1},\ldots,v_{\ell})$
is a smooth path. For each vertex $v_{i}\in X$,
we similarly obtain a set
$X_i \subseteq N(v_{i-1})$ of size $|X_i| \ge d(v_{i-1}) - 14\sqrt{\mu}n$
such that each vertex $v_j \in X_i \setminus \{v_i, v_{i+1}\}$
can be used as a pivot point to give either $v_{j-1}$ or $v_{j+1}$
as another endpoint in $S$.
Hence the definition of $S$ implies that
$X_{i} \setminus \{v_i, v_{i+1}\} \subseteq S^{-} \cup S^{+}$.
Since $X_i \subseteq N(v_{i-1})$, this implies that
the number of edges between $X^-$ and $S^{-}\cup S^{+}$ satisfies
\begin{eqnarray*}
e(X^{-},S^{-}\cup S^{+}) & \ge & \sum_{x\in X^{-}}(d(x)-14\sqrt{\mu}n - 2) \ge e(X^{-},V)-14\sqrt{\mu}n\cdot|X| - 2n,
\end{eqnarray*}
and thus
\[
e\Big(X^{-},V\setminus(S^{-}\cup S^{+})\Big)\le 15\sqrt{\mu}n^{2}.
\]
However, this gives sets $A = X^-$ and $B = V \setminus (S^{-} \cup S^{+})$
satisfying (ii), thus contradicting our assumption.
\end{proof}

It remains to prove the claim. The intuition behind this somewhat
peculiar looking claim comes from the following examples of
 graphs that have minimum degree close to $\frac{n}{2}$
but are not Hamiltonian. First is the graph $G$
on $2k+1$ vertices consisting of two complete graphs $K_{k+1}$
sharing a single vertex.
There exists a Hamilton path $P$ in this graph, but it cannot be closed
into a Hamilton cycle;
note that the set $S$ as in the proof of the lemma above
consists of the first half of the path, and
we have $|S^+ \cup S^-| \approx \frac{n}{2}$.
Second is the complete bipartite graph $K_{k,k+1}$. Again, there exists
a Hamilton path $P$ that cannot be closed into a Hamilton cycle.;
the set $S$ consists of every other vertex along the path,
and we have $|S^+ \cup S^-| \approx \frac{n}{2}$. Thus
informally, our claim asserts that the given graph resembles such
graphs when $|S| \approx \frac{n}{2}$.

\begin{proof}[Proof of Claim]
Recall that $P = (v_0, \ldots, v_\ell)$ is a smooth path that cannot
be extended in at most two rotations. We defined $S$ as a
subset of vertices of $V(P)$ having the following
property: for every $v_{i}\in S$, there exists a smooth path $P'$
of length $\ell$ between $v_{i}$ and $v_{\ell}$,
which is obtained from $P$ in at most three rounds of rotations and
starts with edge $\{v_i, v_{i-1}\}$ or $\{v_i, v_{i+1}\}$.
Moreover, we assumed that $|S| \le (\frac{1}{2}+20\sqrt{\mu})n$.
For a set $X \subseteq V(P)$,
define $X^+ = \{v_{i+1} \,|\, v_i \in X ,\, i\le \ell-1 \}$
and $X^- = \{v_{i-1} \,|\, v_i \in X ,\, i\ge 1 \}$.
An {\em interval} is a set of vertices $I \subseteq V(P)$ of the form
$\{ v_j \,|\, j \in [a,b]\}$ for some $0\le a < b \le \ell$.
Throughout the proof, we sometimes add constants to inequalities,
such as in $|I \cap X| \le |I \cap X^-| + 1$ for an interval $I$
and a set $X \subseteq V(P)$,
in order to account for potential boundary effects.

\begin{figure}[b]
  \centering
  \begin{tikzpicture}

\def\xend{11}

\def\yshiftone{2.2 cm}

   \draw (0 cm, \yshiftone) -- (2.4 cm, \yshiftone);   
   \draw [-triangle 45] (2.4 cm, \yshiftone) -- (3.8 cm, \yshiftone);
   \draw (3.8 cm, \yshiftone) -- (7.6 cm, \yshiftone);
   
   \draw [dotted] (7.6 cm,\yshiftone) -- (8.1 cm,\yshiftone);

   \draw (8.1cm,\yshiftone) -- (9.0cm, \yshiftone);
   \draw [dotted] (9.0 cm,\yshiftone) -- (9.6 cm,\yshiftone);

   \draw (9.6cm,\yshiftone) -- (\xend, \yshiftone);

	\draw (0 cm,\yshiftone + 0.1cm) -- (0 cm,\yshiftone - 0.12 cm) 
							-- (7.7 cm, \yshiftone - 0.12cm) -- (7.7cm,\yshiftone + 0.1cm);
	\draw (3.8 cm, \yshiftone - 0.4cm) node {$I_1$};

	\draw (8.0 cm,\yshiftone + 0.1cm) -- (8.0 cm,\yshiftone - 0.12 cm) 
							-- (\xend, \yshiftone - 0.12cm) -- (\xend,\yshiftone + 0.1cm);
	\draw (9.8 cm, \yshiftone - 0.4cm) node {$I_2$};	

    \draw (0, \yshiftone) .. controls (2.7cm,\yshiftone + 1.1cm)
            and (5.4cm,\yshiftone + 1.1cm) .. (8.1cm, \yshiftone);
    \draw (7.6cm, \yshiftone) .. controls (8.3cm,\yshiftone + 0.5cm)
            and (8.9cm,\yshiftone + 0.5cm) .. (9.6cm, \yshiftone);

   \draw [fill=black] (0 cm,\yshiftone) circle (0.5mm);
   \draw [fill=black] (7.6 cm,\yshiftone) circle (0.5mm);
   \draw [fill=black] (9.0 cm, \yshiftone) circle (0.5mm);
   \draw [fill=black] (\xend cm , \yshiftone) circle (0.5mm);

    \draw (0 cm, \yshiftone - 0.4cm) node {$v_{0}$};
    \draw (9.0cm, \yshiftone - 0.4cm) node {$w$};
    \draw (\xend cm, \yshiftone - 0.4cm) node {$v_{\ell}$};

\def\yshifttwo{0 cm}

   \draw (0 cm, \yshifttwo) -- (3.8 cm, \yshifttwo);   
   \draw [-triangle 45] (4.5 cm , \yshifttwo) -- (3.8 cm, \yshifttwo);
   \draw (4.5 cm, \yshifttwo) -- (9.0 cm, \yshifttwo);
   \draw [dotted] (9.0 cm,\yshifttwo) -- (9.6 cm,\yshifttwo);   
   \draw (9.6cm,\yshifttwo) -- (\xend, \yshifttwo);

	\draw (0 cm,\yshifttwo + 0.1cm) -- (0 cm,\yshifttwo - 0.12 cm) 
							-- (7.7 cm, \yshifttwo - 0.12cm) -- (7.7cm,\yshifttwo + 0.1cm);
	\draw (3.8 cm, \yshifttwo - 0.4cm) node {$I_1$};

	\draw (8 cm,\yshifttwo + 0.1cm) -- (8 cm,\yshifttwo - 0.12 cm) 
							-- (\xend, \yshifttwo - 0.12cm) -- (\xend,\yshifttwo + 0.1cm);
	\draw (9.8 cm, \yshifttwo - 0.4cm) node {$I_2$};

    \draw (0, \yshifttwo) .. controls (3.5cm,\yshifttwo + 1.1cm)
            and (6.1cm,\yshifttwo + 1.1cm) .. (9.6cm, \yshifttwo);

   \draw [fill=black] (0 cm,\yshifttwo) circle (0.5mm);
   \draw [fill=black] (9.0 cm, \yshifttwo) circle (0.5mm);
   \draw [fill=black] (\xend cm , \yshifttwo) circle (0.5mm);

    \draw (0 cm, \yshifttwo - 0.4cm) node {$v_{0}$};            
    \draw (9.0cm, \yshifttwo - 0.4cm) node {$w$};
    \draw (\xend cm, \yshifttwo - 0.4cm) node {$v_{\ell}$};

\end{tikzpicture}
  \caption{Obtaining the same endpoint in two different ways.}
  \label{fig_twicerotate}
\end{figure}
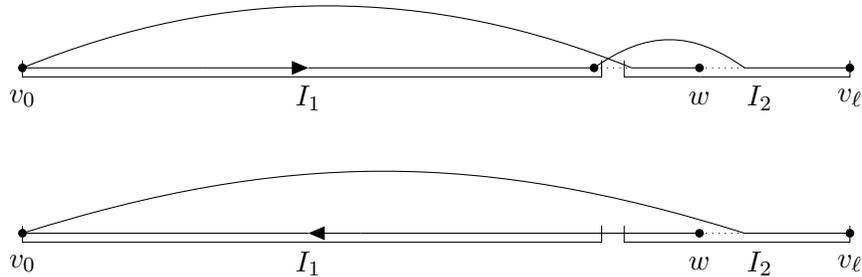

We first prove the existence of a vertex $w \in V(P)$, a `large'
interval $I \subseteq V(P)$, and two smooth paths $P_1$ and $P_2$
of length $\ell$ between $w$ and $v_{\ell}$, where $P_1$ traverses
the interval $I$ positively and $P_2$ traverses
it negatively (see Figure \ref{fig_twicerotate}).
By Lemma \ref{lem:rotation_1} and our assumption on $P$ not
being extendable, there exists
a set $X\subseteq N(v_{0})\cap V(P)$ of size at least $d(v_0) - 14\sqrt{\mu}n$
such that for every vertex $v_{i}\in X$, the path
$(v_{i-1},\ldots,v_{0},v_{i},v_{i+1},\ldots,v_{\ell})$ is smooth.
By definition of $S$, we thus have $X^{-}\subseteq S$.
Let $\beta = 70\sqrt{\mu} + 3\alpha$, and take the
vertex $v_{j}\in X$ whose index $j$ is $\beta n$-th largest among
those in $X$.
Such index exists since
$|X| \ge \delta(G) - 14\sqrt{\mu}n \ge \beta n$,
by the given condition on parameters.

Let $I_{1}=\{v_{0},v_{1},\ldots,v_{j-1}\}$ and $I_{2}=\{v_{j},\ldots,v_{\ell}\}$.
Note that
\begin{align} \label{eq:interval_bound}
	|I_1 \cap S| = |S| - |I_2 \cap S| \le |S| - |I_2 \cap X^-| \le \Big(\frac{1}{2} + 20\sqrt{\mu} - \beta \Big)n + 1.
\end{align}
Let $P'=(v_{j-1},v_{j-2},\cdots,v_1,v_{0},v_{j},v_{j+1},\cdots,v_{\ell})$.
Note that $P'$ cannot be extended 
since $P$ cannot be extended in at most two rotations.
Thus by applying Lemma \ref{lem:rotation_1} to $P'$, we see, as above, that there
exists a set $Y\subseteq N(v_{j-1}) \setminus \{v_j, v_{j+1}\}$ of size
$|Y| \ge d(v_{j-1}) - 14\sqrt{\mu}n - 2$ such that $I_1 \cap Y^+ \subseteq S$
and $I_2 \cap Y^- \subseteq S$.
If there exists a vertex $w \in I_2 \cap X^{-} \cap Y^{-}$, then
there exist two paths $P_+$, $P_-$ and an interval $I=I_1$ as claimed
(see Figure \ref{fig_twicerotate}).
Assume for the contrary that $I_2 \cap X^{-} \cap Y^{-} = \emptyset$.
Then since $I_2 \cap X^{-}$ and $I_2 \cap Y^{-}$ are both subsets of $I_2 \cap S$,
we see that
\begin{align*}
	|X|+|Y|
	=\,& |I_1 \cap X|+|I_2 \cap X|+|I_1 \cap Y|+|I_2 \cap Y| \\
	\le\,& |I_1 \cap X^-|+|I_2 \cap X^-|+|I_1 \cap Y^+|+|I_2 \cap Y^-| + 4 \\ 
	\le\,& 2 \cdot |I_1 \cap S|+|I_2 \cap S| + 4,
\end{align*}
where we used the fact that $I_1 \cap X^-$, $I_1 \cap Y^+$, $I_2 \cap X^-$,
and $I_2 \cap Y^-$ are all subsets of $S$, and that $I_2 \cap X^-$
and $I_2 \cap Y^-$ are disjoint. Since $|I_1 \cap S| + |I_2 \cap S| = |S|$,
we see by \eqref{eq:interval_bound} that
\[
	|X|+|Y| \le |I_1 \cap S| + |S| + 4 \le (1 + 40\sqrt{\mu} - \beta)n + 5.
\]
which is a contradiction, since
\[
	\min\{ |X|, |Y|\} \ge \delta(G) - 14\sqrt{\mu}n - 2 \ge \Big(\frac{1}{2} - \alpha - 14\sqrt{\mu}\Big)n - 2,
\]
and $\beta = 70\sqrt{\mu} + 3\alpha$.

Hence we proved the existence of a vertex $w \in V(P)$, an
interval $I_1 \subseteq V(P)$, and two smooth paths $P_1$ and $P_2$ of length $\ell$
between $w$ and $v_{\ell}$, where $P_1$ traverses
the interval $I_1$ positively and $P_2$ traverses it negatively.
Note that if a vertex $v_i \in N(w) \cap I_1$ for $0 < i < j-1$
is used as a pivot point in $P_1$, then we obtain $v_{i-1}$ as a new endpoint, and
if used in $P_2$, then we obtain $v_{i+1}$ as a new endpoint.
Since $0 < i < j - 1$, then 
the edge incident to the new endpoint belongs to the original path $P$
for both cases.

Since $P$ cannot be extended in at most two rotations, 
by Lemma \ref{lem:rotation_1}, there exists a set
$Z_1 \subseteq N(w) \cap I_1$ of size at least $|N(w) \cap I_1| - 14\sqrt{\mu}n$ that can be used as pivot points of $P_1$, and a set $Z_2 \subseteq N(w) \cap I_1$ of
size at least $|N(w) \cap I_1| - 14\sqrt{\mu}n$ that can be used as pivot
points of $P_2$. Therefore, the set $Z = Z_1 \cap Z_2$ can be used as pivot
points for both paths, and has size
\[ 
	|Z| \ge |N(w) \cap I_1| - 28\sqrt{\mu}n. 
\]
Moreover, the observation above shows that $Z \setminus \{v_0, v_{j-1}\} \subseteq S^{-}\cap S^{+}$.
Therefore
\begin{align*}
	|S^{-}\cap S^{+}| \ge |Z| - 2
	\ge\,& |N(w) \cap I_1|-28\sqrt{\mu}n - 2.
\end{align*}
Let $Z_w \subseteq N(w) \cap V(P)$ be the set obtained by applying Lemma~\ref{lem:rotation_1}
to the path $P_2$. Since $Z_w \subseteq N(w)$ and $|Z_w| \ge (\frac{1}{2} - \alpha)n - 14\sqrt{\mu}n$,
\begin{align*}
	|S^- \cap S^+| &\ge |Z_w \cap I_1|-28\sqrt{\mu}n - 2 = |Z_w|  - |Z_w \cap I_2| - 28\sqrt{\mu}n - 2\\
	&\ge \Big(\frac{1}{2} - \alpha\Big)n - 14\sqrt{\mu}n - |Z_w \cap I_2| - 28\sqrt{\mu}n - 2.
\end{align*}
Since all vertices in $(Z_w \cap I_2) \setminus \{v_j, w^+, w^{++}\}$ can be used as pivot points for path $P_2$ to give new endpoints in $I_2 \cap S$, we see that
$|Z_w \cap I_2| \le |I_2 \cap S| + 3$. Therefore,
\begin{align*}
	|S^- \cap S^+| \ge\,& \Big(\frac{1}{2} - \alpha\Big)n-|I_2 \cap S| - 42\sqrt{\mu} n - 5.
\end{align*}
Since
\begin{align*}
	|I_2 \cap S| &\le |I_2 \cap X^{-}| + |S \setminus X^{-}| = (\beta n - 1) + (|S| - |X^{-}|) \\
	&\le
	\Big(\frac{1}{2} + 20\sqrt{\mu} + \beta \Big)n - \Big(\frac{1}{2} - \alpha - 14\sqrt{\mu}\Big)n
	\le
	(\alpha + \beta + 34\sqrt{\mu})n,
\end{align*}
we see that
\[
	|S^{-} \cap S^{+}| \ge \left(\frac{1}{2} - 2\alpha - \beta - 77\sqrt{\mu} \right)n.
\]
From this, since $\beta = 70\sqrt{\mu} + 3\alpha$, we obtain
\begin{eqnarray*}
|S^{-}\cup S^{+}| = |S^{-}|+|S^{+}|-|S^{-}\cap S^{+}|
  &\le&  2\left(\frac{1}{2}+20\sqrt{\mu}\right)n - \left(\frac{1}{2} - 2\alpha - \beta - 77\sqrt{\mu} \right)n \\
 & = & \left(\frac{1}{2}+5\alpha+187\sqrt{\mu}\right)n \le \left(\frac{1}{2}+200\sqrt{\mu}\right)n.
\end{eqnarray*}
\end{proof}

By using Lemma \ref{lem:rotation_2}, we can now prove Theorem \ref{thm:sub_1}.

\begin{proof}[Proof of Theorem \ref{thm:sub_1}]
Remove from $G$ all the edges whose two
endpoints are $\sqrt{\mu}$-correlated.
By Proposition \ref{prop:bound_bad_correlated}, we know that
the resulting graph $G'$ has minimum degree at least
$\Big(\frac{1}{2} - \sqrt{\mu}\Big)n$.
If there exist two sets $A$ and $B$ of sizes
$|A|,|B| \ge (\frac{1}{2}-200\sqrt{\mu})n$ such that
$e_{G'}(A,B)\le 15\sqrt{\mu}n^2$, then we have
$e_{G}(A,B) \le 16\sqrt{\mu}n^2$, and thus
alternative (ii) holds. Hence we may assume that
there are no such two sets $A$ and $B$.
From now on, we will only consider the graph $G'$. Thus by abusing
notation, we let $G$ be the graph $G'$. Note that Lemma \ref{lem:rotation_2}~(i) applies to our graph.

Let $C = (v_0, \ldots, v_\ell, v_0)$ be a
maximum length compatible cycle in $G$.
Throughout the proof, for a set $X \subset V$,
define $X^+ = \{ v_{i+1} \,:\, v_i \in X \cap V(C)\}$, and
$X^- =  \{ v_{i-1} \,:\, v_i \in X \cap V(C)\}$ (where index
addition and subtraction are modulo $\ell+1$).
By Lemma \ref{lem:rotation_2},
we have $|C| \ge (\frac{1}{2} + 20\sqrt{\mu})n$.
If $C$ is a Hamilton cycle, then
we are done. Otherwise, there exists a vertex $z$ not
in the cycle.

Define $B_0$ as the set of vertices that are $2\sqrt{\mu}$-bad
for $C$. By Proposition \ref{prop:bound_bad_correlated},
we know that $|B_0| \le \sqrt{\mu}n$. Hence if $|C| < (1-\sqrt{\mu})n$,
then we may take $z$ to be a vertex not in $B_0$. In this case,
define $B_1$ as the set of bad neighbors of $z$ in $C$. By definition,
we have $|B_1| \le 2\sqrt{\mu}|C|$.
Otherwise if $|C| \ge (1-\sqrt{\mu})n$, then let $z$ be an
arbitrary vertex not in $C$, and define $B_1 = \emptyset$.
Since
\[
	|N(z) \cap V(C)|
	\ge d(z) + |V(C)| - n
	\ge \Big(\frac{1}{2} - \sqrt{\mu}\Big)n + \Big(\frac{1}{2} + 20\sqrt{\mu}\Big)n - n
	\ge 7\sqrt{\mu}n
\]
and
\[
	3|B_0| + |B_1| \le 5\sqrt{\mu}n,
\]
the set $T = \{ v_i \in V(C) \,|\, v_i \in N(z), v_{i-1}, v_{i+1} \notin B_0, v_i \notin B_0 \cup B_1 \}$
has cardinality
\[
	|T| \ge 2\sqrt{\mu}n.
\]

Take a vertex $v_i \in T$. Since $\mathcal{F}$ is $\mu n$-bounded,
there are at most $\mu n$ vertices $x \in T$ for which
the edges $\{z, v_i\}$ and $\{z, x\}$ are incompatible.
Also, by Proposition \ref{prop:bound_bad_correlated}, there are
at most $\sqrt{\mu}n$ vertices $x \in T$ for which $x^+$
is $\sqrt{\mu}$-uncorrelated with $v_{i+1}$. 
Therefore, since $|T| \ge 2\sqrt{\mu}n$, we can find a vertex $v_j \in T$ for which the edges
$\{z, v_i\}$ and $\{z, v_j\}$ are compatible, the
pair of vertices $v_{i+1}$ and $v_{j+1}$ is $\sqrt{\mu}$-uncorrelated,
and $v_j \neq v_i, v_{i-1}, v_{i+1}$.
Consider the path
$P = (v_{i+1}, \ldots, v_j, z, v_i, v_{i-1}, \ldots, v_{j+1})$.
First, the two endpoints of $P$ are
$\sqrt{\mu}$-uncorrelated by the choice of $v_i$ and $v_j$,
and second, both $v_{i+1}$ and $v_{j+1}$ have at most
$2\sqrt{\mu}|C| + 3$ bad neighbors in $P$, since
$v_{i+1}, v_{j+1} \notin B_0$ and the set of bad neighbors
in $P$ and in $C$ can differ only in at most three vertices
$v_j, z$, and $v_i$.

To check whether $P$ is compatible, it suffices to check the
compatibility of the three pairs
$\Big(\{v_{j-1}, v_j\}, \{v_j, z\}\Big)$,
$\Big(\{v_j, z\}, \{z, v_i\}\Big)$, and
$\Big(\{z, v_i\}, \{v_i, v_{i-1}\}\Big)$. The pair of edges
$\{v_j, z\}$ and $\{z, v_i\}$ is compatible by our choice
of $v_i$ and $v_j$.
If $|C| < (1-\sqrt{\mu})n$, then by the choice of $z$ and
the set $B_1$, since $v_i, v_j \notin B_1$ (this follows from 
$v_i, v_j \in T$), we further see
that the two other pairs are both compatible.
Hence $P$ is compatible, and therefore smooth.
By Lemma \ref{lem:rotation_2},
this gives a compatible cycle longer than $C$, contradicting
the maximality of $C$.

Therefore, we must have $|C| \ge (1-\sqrt{\mu})n$.
In this case, $P$ is `almost' smoothly compatible with $\mathcal{F}$,
in the sense that it satisfies all the conditions except for possibly the
compatibility of two pairs of edges.
Define $\mathcal{F}_1$ as the incompatibility system obtained from
$\mathcal{F}$ by making the pairs of edges
$\Big(\{v_{j-1}, v_j\}, \{v_j, z\}\Big)$ and
$\Big(\{z, v_i\}, \{v_i, v_{i-1}\}\Big)$ to be compatible.
Note that $P$ is smoothly compatible with $\mathcal{F}_1$.
Hence by Lemma \ref{lem:rotation_2}, we can find a cycle
$C_1$ compatible with $\mathcal{F}_1$, with
$V(C_1) \supseteq V(P) \supseteq V(C)$ and 
\[
	|E(C_1) \setminus E(C)|
	\le |E(C_1) \setminus E(P)| + 2
	\le 3|V(C_1) \setminus V(C)| + 6.
\]

Let $P_1$ be the path obtained from $C_1$ by removing the
edge $\{v_{j-1}, v_j\}$ if it is in $C_1$ (if it is not in
$P_1$, then skip the rest of this paragraph).
We claim that $P_1$ is smoothly compatible with $\mathcal{F}_1$.
First, it is compatible with $\mathcal{F}_1$, since $C_1$ is.
Second, the two endpoints are $\sqrt{\mu}$-uncorrelated,
since we started by removing all edges whose two
endpoints are $\sqrt{\mu}$-correlated.
Third, since $v_j \notin B_0$, we know that
$v_j$ is $2\sqrt{\mu}$-good for $C$. Since $V(C_1) \supseteq V(C)$, it
follows that $v_j$ has at most
\[
	2\sqrt{\mu}|C| + 2|E(C_1) \setminus E(C)| \le 2\sqrt{\mu}|C| + 6(|C_1|-|C|) + 12
	\le
	8\sqrt{\mu} |C_1| = 8\sqrt{\mu}|P_1|
\]
bad neighbors in $P_1$, where the final inequality follows from
$|C| \ge (1-\sqrt{\mu})n$. A similar estimate holds for the
other endpoint $v_{j-1}$.
Let $\mathcal{F}_2$ be the incompatibility system obtained from
$\mathcal{F}$ by making only the pair
$\Big(\{z, v_i\}, \{v_i, v_{i-1}\}\Big)$ to be compatible.
Note that $P_1$ is smoothly compatible with $\mathcal{F}_2$ as well,
since it does not contain the edge $\{v_{j-1}, v_j\}$.
Hence we can find, by Lemma \ref{lem:rotation_2}, a cycle
$C_2$ compatible with $\mathcal{F}_2$.

Repeat the argument above for $C_2$ and $\mathcal{F}_2$,
to find a path $P_2$ that is smoothly compatible with $\mathcal{F}_2$,
not containing the edge $\{v_i, v_{i-1}\}$. This path
is smoothly compatible with $\mathcal{F}$, and thus by Lemma \ref{lem:rotation_2},
we can find a cycle compatible with $\mathcal{F}$ whose vertex set contains $V(C) \cup \{z\}$,
contradicting the maximality of $C$. Therefore, the given graph contains a Hamilton cycle compatible with $\mathcal{F}$.
\end{proof}

\subsection{Step II : Theorem \ref{thm:sub_2}}

In this subsection, we consider the case when $G$ contains
two large subsets $A$ and $B$ with few edges between them.
We first show that in this case $A$ and $B$ are either almost disjoint,
or almost identical. Afterwards, for each case, we further
process the graph to convert the problem into a problem
of establishing `compatible Hamilton connectivity' of almost
complete graphs, and a problem of establishing `compatible
Hamiltonicity' of almost bipartite graphs. The proof of
these final pieces are very similar in structure to the proof
of Theorem \ref{thm:sub_1}, but are different in detail,
and will be given in the following section.

\medskip

Suppose that $G$ is an $n$-vertex graph of minimum degree at least $\frac{n}{2}$,
and let $V = V(G)$. Suppose that there exist two sets $A$ and $B$ of sizes
$|A|, |B| \ge (\frac{1}{2} - \nu)n$ such that $e(A,B) \le \eta n^2$.
We have
\begin{align*}
	e(A,B)
	\ge\,& e(A\cap B, A \cup B) \\
	\ge\,& |A \cap B| \cdot (\delta(G) - (n - |A \cup B|)) \\
	=\,& |A \cap B| \cdot (\delta(G) + |A| + |B| - |A \cap B|- n).
\end{align*}
The cardinalities of $A,B$, the bound $e(A,B) \le \eta n^2$, and the
bound $\delta(G) \ge \frac{n}{2}$
imply that
\begin{align*}
	|A \cap B| \cdot \Big(\frac{n}{2} - 2\nu n - |A \cap B|\Big) \le \eta n^2.
\end{align*}
If $3\eta n \le |A \cap B| \le (\frac{1}{2} - 2\nu - 3\eta)n$, then
the left-hand-side above is greater than $\eta n^2$, since
$3\eta \cdot (\frac{1}{2} - 2\nu - 3\eta) > \eta$. Hence, we must have
\[
	|A \cap B| < 3\eta n \quad \text{ or } \quad |A \cap B| > \Big(\frac{1}{2} - 2\nu - 3\eta\Big)n.
\]
We consider the two cases separately.

\medskip

\noindent {\bf Case 1}. $|A \cap B| < 3\eta n$.

If $|A \cap B| < 3\eta n$, then there exist disjoint sets
$A' \subseteq A$ and $B' \subseteq B$ satisfying
\[
	|A'|, |B'|
	\ge
	\Big(\frac{1}{2} - \nu\Big)n - \frac{|A\cap B|}{2}
	\ge
	\Big(\frac{1}{2} - \nu - \frac{3}{2}\eta\Big)n,
\]
and $e(A', B') \le e(A,B) \le \eta n^2$. Furthermore, by considering
a random partition of the vertices not in $A' \cup B'$,
we can obtain a partition $A'' \cup B''$ of the vertex set so that
\begin{align*}
	e(A'', B'')
	\le\,& e(A', B') + \frac{1}{2} e(V \setminus (A'\cup B'), V)  \\
	\le\,& \eta n^2 + \frac{1}{2} \cdot (2\nu + 3\eta)n \cdot n
	= \Big(\nu + \frac{5}{2}\eta \Big)n^2.
\end{align*}
Consider the partition $A'' \cup B''$, and repeatedly move vertices
that have at most $\frac{n}{6}$
neighbors in their own part to the other part. Since $G$ has minimum
degree at least $\frac{n}{2}$, such vertex has at least $\frac{n}{3}$ neighbors
in the other part prior to moving, and thus each time we move a
vertex, the number of edges across the partition decreases by at least
$\frac{n}{6}$. Hence the process ends in at most $(6\nu + 15\eta)n$
steps, producing a partition $W \cup W^c$. In this partition,
both parts have size
between $(\frac{1}{2} - 7\nu - \frac{33}{2}\eta)n \ge \frac{n}{3}$ and
$(\frac{1}{2} + 7\nu + \frac{33}{2}\eta)n \le \frac{2n}{3}$,
and we have $e(W,W^c) \le e(A'', B'') \le (\nu + \frac{5}{2}\eta)n^2$.
Moreover, the minimum degrees of $G[W]$ and $G[W^c]$ are
both at least
$\frac{n}{6} \ge \frac{1}{5} \max\{|W|, |W^c|\}$,
since $\max\{|W|, |W^c|\} \le \frac{2}{3}n$.

Without loss of generality, assume that $|W| \le \frac{n}{2}$.
Note that $e(W)$ is at least
\[
	\frac{1}{2}\Big(|W|\cdot \frac{n}{2} - e(W,W^c)\Big)
	\ge
	{|W| \choose 2} - \Big(\frac{1}{2}\nu + \frac{5}{4}\eta \Big)n^2
	\ge
	{|W| \choose 2} - \Big(\frac{9}{2}\nu + \frac{45}{4}\eta \Big)|W|^2,
\]
where the last inequality follows from the bound $|W| \ge \frac{n}{3}$.
Also, $e(W^c)$ is at least
\begin{align*}
	\frac{1}{2}\Big(|W^c|\cdot \frac{n}{2} - e(W,W^c)\Big)
	\ge\,&
	{|W^c| \choose 2} - \frac{1}{2} |W^c| \Big(|W^c| - \frac{n}{2}\Big)- \Big(\frac{1}{2}\nu +
\frac{5}{4}\eta \Big)n^2 \\
	\ge\,&
	{|W^c| \choose 2} - \Big(\frac{7}{2}\nu + \frac{33}{4}\eta\Big)n|W^c| - \Big(\frac{1}{2}\nu + \frac{5}{4}\eta \Big)n^2 \\
	\ge\, &
	{|W^c| \choose 2} -  \Big(9\nu + 22\eta \Big)|W^c|^2,
\end{align*}
where the last inequality follows from the bound $|W^c| \ge \frac{n}{2}$.
Therefore, in the end, we obtain a partition $W \cup W^c$ with
the following properties:
\begin{itemize}
  \setlength{\itemsep}{0pt} \setlength{\parskip}{0pt}
  \setlength{\parsep}{0pt}
\item $\frac{n}{3} \le |W| \le \frac{n}{2}$,
\item For $X = W$ and $W^c$, the graph $G[X]$ has minimum degree at least $\frac{|X|}{5}$, and
\item For $X = W$ and $W^c$, we have $e(X) \ge {|X| \choose 2} - \Big(9\nu + 22\eta \Big)|X|^2$.
\end{itemize}

We find a Hamilton cycle compatible with $\mathcal{F}$ by first
finding two vertex disjoint edges
$e_1 =\{x_1, y_1\}$ and $e_2 = \{x_2, y_2\}$ such that $x_1, x_2 \in W$ and
$y_1, y_2 \in W^c$, and then finding a Hamilton path in $G[W]$ whose two endpoints
are $x_1$ and $x_2$, and a Hamilton path in $G[W^c]$ whose two endpoints
are $y_1$ and $y_2$; of course we need to ensure the compatibility of the so obtained cycle.

To find two vertex disjoint edges, if $|W| < \frac{n}{2}$, the minimum
degree condition of $G$ implies that each vertex in $W$ has at least
$\lceil\frac{n}{2}\rceil - (|W|-1) \ge 2$
 neighbors in $W^c$. Thus in this case, we can easily find two
vertex disjoint edges. Otherwise, if $|W| = |W^c| = \frac{n}{2}$,
then the bipartite graph induced by the partition $W \cup W^c$
has minimum degree at least $1$, which for $|W| \ge 2$
implies that the minimum vertex cover is of size at least $2$.
Therefore we can find two vertex disjoint edges in this case as well.
Let $e_1 = \{x_1, y_1\}$ and $e_2 = \{x_2, y_2\}$ be the vertex disjoint
edges that we have found, where $x_1, x_2 \in W$ and
$y_1, y_2 \in W^c$.

Let $G_1 = G[W]$, and add the edge $\{x_1, x_2\}$ if it is not already in $G_1$.
Define an incompatibility system $\mathcal{F}_1$ over $G_1$ as follows.
For two edges $e_1, e_2 \in E(G_1)$ both different from $\{x_1, x_2\}$,
let $e_1$ and $e_2$ be incompatible in $\mathcal{F}_1$ if and only if
they are incompatible in $\mathcal{F}$. For an edge $e \neq \{x_1, x_2\}$
containing $x_1$,
the  edges $\{x_1, x_2\}$ and $e$ are incompatible in $\mathcal{F}_1$
if and only if $\{x_1, y_1\}$ and $e$ are incompatible in $\mathcal{F}$.
For an edge $e \neq \{x_1, x_2\}$ containing $x_2$,
the  edges $\{x_1, x_2\}$ and $e$ are incompatible in $\mathcal{F}_1$
if and only if $\{x_2, y_2\}$ and $e$ are incompatible in $\mathcal{F}$.
One can easily check that $\mathcal{F}_1$ is a $\mu n$-bounded incompatibility system.
Similarly let $G_2 = G[W^c]$ (with the edge $\{y_1, y_2\}$ added)
and define a $\mu n$-bounded incompatibility system $\mathcal{F}_2$
over $G_2$.

Suppose that we find a Hamilton cycle $C_1$ in $G_1$ containing $\{x_1, x_2\}$ and
compatible with $\mathcal{F}_1$, and $C_2$ in $G_2$ containing $\{y_1, y_2\}$ and
compatible with $\mathcal{F}_2$. The two cycles $C_1$, $C_2$ together with the
two edges $e_1$ and $e_2$ give a Hamilton cycle $C$ in $G$ that is compatible with
$\mathcal{F}$, due to the way we defined the incompatibility systems $\mathcal{F}_1$
and $\mathcal{F}_2$. Therefore, it suffices to prove the following theorem
(we will apply it with 
$\beta_{\ref{thm:almost_clique}} = 9\nu + 22\eta$
and $\mu_{\ref{thm:almost_clique}} = 3\mu$, where the subscripts indicate
that the constants will be applied to Theorem \ref{thm:almost_clique}. The factor of $3$ in $\mu_{\ref{thm:almost_clique}}$ has been introduced since
$W$ can be as small as $\frac{n}{3}$). 
Its proof is similar to that of Theorem \ref{thm:sub_1} (and is in fact much simpler),
and will be given separately in the following section.

\begin{thm} \label{thm:almost_clique}
Let $\beta$ and $\mu$ be positive reals satisfying
$\beta + 2\sqrt{\mu} \le \frac{1}{1200}$.
Let $G$ be an $n$-vertex graph with minimum degree at least $\frac{n}{5}$ and
at least ${n \choose 2} - \beta n^2$ edges, and $\mathcal{F}$
be a $\mu n$-bounded incompatibility system over $G$.
Then for every edge $e$ of $G$, there exists a Hamilton cycle
containing $e$ that is compatible with $\mathcal{F}$.
\end{thm}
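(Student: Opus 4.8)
The plan is to mimic the two-phase rotation–extension strategy from the proof of Theorem~\ref{thm:sub_1}, but in the much more favorable setting of a near-complete graph, where essentially every vertex is a good neighbor of essentially every other vertex and uncorrelated pairs are ubiquitous. First I would record the relevant structural consequences of the hypothesis $e(G)\ge\binom n2-\beta n^2$: all but at most $4\beta n$ vertices have degree at least $(1-\sqrt\beta)n$ in $G$ (a standard counting argument, since a vertex of degree $<(1-\sqrt\beta)n$ contributes $>\sqrt\beta n/2$ missing edges). Call the exceptional vertices \emph{poor}; there are at most $\sqrt\beta n$ of them. The rest of the proof produces a compatible Hamilton cycle through the prescribed edge $e=\{a,b\}$.

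The core is an analogue of Lemma~\ref{lem:rotation_2} tailored to this graph: starting from any smooth path $P=(v_0,\dots,v_\ell)$ one can, in a bounded number of rotations, either extend $P$ to a longer smooth path or close it into a compatible cycle of length at least $(1-\sqrt\mu)n$, \emph{unless} some exceptional obstruction occurs. Since the only obstruction encountered in Lemma~\ref{lem:rotation_2} was the appearance of the sparse pair $(A,B)$ — which cannot arise here because $G$ is near-complete and $\beta$ is tiny — the extension/closing step always succeeds. Concretely: if $v_0$ has a neighbor outside $V(P)$ avoiding the $O(\sqrt\mu\, n+\sqrt\beta\, n)$-sized bad set (bad neighbors of $v_0$, $\sqrt\mu$-correlated partners of $v_\ell$, $2\sqrt\mu$-bad vertices for $P$, and poor vertices), we extend. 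Otherwise, by Lemma~\ref{lem:rotation_1} the set $X$ of usable pivots has size $|X|\ge d(v_0)-14\sqrt\mu n\ge(1-\sqrt\beta-14\sqrt\mu)n$; a second rotation from a typical $v_{i-1}$ produces a further set of size $\ge(1-\sqrt\beta-28\sqrt\mu)n$ of endpoints in $S^-\cup S^+$; since these sets alone already overflow $V(C)$, we find a compatible closing edge back to $v_\ell$ and close the cycle. This is strictly easier than the argument for Lemma~\ref{lem:rotation_2}, which is exactly why the paper remarks that the proof of Theorem~\ref{thm:almost_clique} is "much simpler."

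With that tool in hand, I would build the Hamilton cycle through $e$ as follows. Start from the single edge $e=\{a,b\}$; this is a smooth path once we check that $a,b$ are a $\sqrt\mu$-uncorrelated pair — and here, unlike in Theorem~\ref{thm:sub_1}, we are not free to delete correlated edges since $e$ is prescribed, so I would instead handle correlated endpoints by first rotating once away from $e$ (keeping $e$ inside the path as an interior edge) to reach a path whose two endpoints are a generic, hence uncorrelated, pair; by Proposition~\ref{prop:bound_bad_correlated}(ii) and the abundance of neighbors this costs only a bounded number of rotations and never touches $e$. Then repeatedly apply the rotation/extension lemma: each application either lengthens the current smooth path or closes a compatible cycle of length $\ge(1-\sqrt\mu)n$ containing $e$. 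As in the proof of Theorem~\ref{thm:sub_1}, once we have such a long compatible cycle $C$ missing a vertex $z$, we splice $z$ in by opening $C$ at a good insertion point adjacent to $z$, forming a path $P$ of length $|C|$, and re-close it — temporarily relaxing the one or two edge-incompatibilities created at the splice by passing to systems $\mathcal F_1,\mathcal F_2$ as in that proof, then eliminating the relaxed edges one at a time — until every vertex lies on the cycle. Since $|C|\ge(1-\sqrt\mu)n$ each splice only affects a $\sqrt\mu$-fraction, so goodness is preserved throughout. This strictly increases $|V(C)|$ at each round, so after at most $\sqrt\mu n$ rounds we obtain a compatible Hamilton cycle through $e$.

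The main obstacle is the bookkeeping to guarantee that $e$ is never destroyed and that $a,b$ never become bad or correlated endpoints: one must maintain $e$ as an interior edge of every intermediate path, which forces the "rotate away from $e$ first" preprocessing and requires checking that the pivot choices in Lemma~\ref{lem:rotation_1} can always be made to avoid $e$ (they can, since excluding the two endpoints of $e$ from the pivot pool costs only $2$ in the estimate $|X|\ge d(v_0)-14\sqrt\mu n$). Everything else is a routine transcription of the Theorem~\ref{thm:sub_1} argument with the sparse-pair alternative replaced by the impossible inequality $\beta+2\sqrt\mu\le\frac1{1200}$; in particular one never needs the delicate Claim about $|S^-\cup S^+|$, since here $|S|$ is simply forced to be $\ge(1-O(\sqrt\mu+\sqrt\beta))n$ by degree counting.
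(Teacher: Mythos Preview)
Your proposal follows essentially the same approach as the paper: prove a simplified rotation--extension lemma (the paper's Lemma~\ref{lem:rotation_2_mod}) that, given any smooth path containing $e$, produces a long compatible cycle containing $e$; then take a maximum such cycle and splice in any missing vertex via the $\mathcal{F}_1,\mathcal{F}_2$ relaxation trick, exactly as you describe.

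One detail you gloss over: your bound $|X|\ge(1-\sqrt\beta-14\sqrt\mu)n$ assumes $d(v_0)\ge(1-\sqrt\beta)n$, i.e., that the current endpoint $v_0$ is not poor. But after a rotation the new endpoint $v_{i-1}$ may well be poor (minimum degree is only $n/5$), so you cannot simply iterate. The paper handles this by spending one preliminary rotation to land on a high-degree endpoint: since $|X|\ge\delta(G)-14\sqrt\mu n>14\beta n$ always exceeds the number of poor vertices, some pivot $v_i$ has $v_{i-1}$ in the high-degree set $L$, and only from that endpoint does one run the large-pivot-set closing argument. A second minor difference: rather than ``rotate away from $e$'' to achieve uncorrelated endpoints, the paper simply deletes at the outset all edges with $\sqrt\mu$-correlated endpoints \emph{except} $e$ itself; thereafter any non-$e$ edge appearing as the two ends of a path automatically has uncorrelated endpoints, which streamlines the bookkeeping you flag in your last paragraph.
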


\medskip

\noindent {\bf Case 2}. $|A \cap B| \ge \Big(\frac{1}{2} - 2\nu - 3\eta\Big)n$.

Let $A' = A \cap B$ and $B' = V \setminus A'$. Note that
$e(A', A') \le e(A, B) \le \eta n^2$. Therefore,
\[ e(A', B') \ge \delta(G) \cdot |A'| - e(A', A') \ge \frac{n}{2} \cdot \Big(\frac{1}{2} - 2\nu - 3\eta\Big)n - \eta n^2
= \Big(\frac{1}{4} - \nu - \frac{5}{2}\eta\Big)n^2. \]
Repeatedly move vertices having at most $\frac{n}{6}$ neighbors
across the partition to the other part. Since $G$ has minimum degree
at least $\frac{n}{2}$, such vertex has at least $\frac{n}{3}$
neighbors in its own part prior to moving. Hence each time we move a
vertex, the number of edges across the partition increases by at least
$\frac{n}{6}$. Since the maximum possible number of edges across
a partition is $\frac{1}{4}n^2$, the process ends
in at most $(6\nu + 15\eta)n$ steps,
producing a partition $W \cup W^c$. Both parts have size
between $|A \cap B| - (6\nu + 15\eta)n \ge (\frac{1}{2} - 8\nu - 18\eta)n$
and $(\frac{1}{2} + 8\nu + 18\eta)n$,
and satisfy $e(W,W^c) \ge e(A', B') \ge \Big(\frac{1}{4} - \nu - \frac{5}{2}\eta\Big)n^2$.
Moreover, each vertex has at least $\frac{n}{6}$
neighbors across the partition.
Without loss of generality, assume that $|W| \ge \frac{n}{2}$.
Then, while the bound $|W| > \lceil \frac{n}{2} \rceil$ holds,
repeatedly move vertices $w \in W$ having at least $\frac{n}{16}$
neighbors in $W$, to the other part. Note that we move
at most $(8\nu + 18\eta)n$ vertices during this process.
In the end, we obtain a partition with the following properties.
\begin{itemize}
  \setlength{\itemsep}{0pt} \setlength{\parskip}{0pt}
  \setlength{\parsep}{0pt}
\item the bipartite graph induced by $G$ on $W \cup W^c$ has minimum degree at
least $\Big(\frac{1}{16} - 8\nu - 18\eta\Big)n$,
\item $e(W, W^c) \ge \Big(\frac{1}{4} - \nu - \frac{5}{2}\eta\Big)n^2 - \Big(8\nu + 18\eta \Big)n^2 \ge \Big(\frac{1}{4} - 9\nu - 21\eta\Big)n^2$,
\item $\lceil \frac{n}{2} \rceil \le |W| \le \Big(\frac{1}{2} + 8\nu + 18\eta\Big)n $, and
\item if $|W| > \lceil \frac{n}{2} \rceil$, then $G[W]$ has maximum degree less than $\frac{n}{16}$.
\end{itemize}

Let $|W| = \frac{n+t}{2}$ and $|W^c| = \frac{n-t}{2}$, for a
non-negative integer $t \le (16\nu + 36\eta)n$.
The Hamilton cycle that we find will make use of exactly $t$ edges
within the set $W$, and all other edges will be between $W$ and $W^c$.
We thus must first find $t$ edges within $W$ (we may assume $t > 0$).
In this case,
since $G$ has minimum degree at least $\frac{n}{2}$, we see that each
vertex $w \in W$ has at least $\frac{t}{2}$ neighbors in $W$.
Therefore, $e(W) \ge \frac{1}{2} \cdot \frac{t}{2}|W| \ge \frac{nt}{8}$.
If $t=1$, then let $e_1$ be an arbitrary edge whose
both endpoints are in $W$. Otherwise, if $t > 1$, then
since $G[W]$ has maximum degree less than $\frac{n}{16}$,
it has covering number greater than $2t$, and therefore contains
$t$ disjoint edges $e_1, \ldots, e_t$.
Let $E_0 = \{e_1, \ldots, e_t\}$. If $t=0$, then let $E_0 = \emptyset$.

Let $W_1 \subseteq W$ be a set of size $|W_1| = \frac{n-t}{2}$
that intersects each edge $e_i \in E_0$ in exactly one vertex,
and let $W_2 = W^c$. Note that $|W_1| = |W_2|$.
Our next step towards establishing Hamiltonicity is to find a perfect matching
between $W_1$ and $W_2$. This perfect matching will later
play an important role in finding a Hamilton cycle.
Consider a bipartite subgraph $H$ of $G$ obtained by the following process.
First, take only the edges between the two sets $W_1$ and $W_2$.
Then, for each $e_i \in E_0$ and its endpoint $v_i \in W_1$, remove all
edges incident to $v_i$ that are incompatible with $e_i$.
We claim that $H$ satisfies Hall's condition. Note that $H$ has minimum
degree at least
\[
	\delta(H)
	\ge
	\Big(\frac{1}{16} - 8\nu - 18\eta\Big)n - t - \mu n
	\ge
	\Big(\frac{1}{16} - 24\nu - 54\eta - \mu\Big)n
	\ge
	\frac{n}{20}.
\]
Furthermore, since the complement of $G$ has at most $(9\nu + 21\eta)n^2$ edges bewteen $W$ and $W^c$ and $H$ is obtained from an induced subgraph of $G$ by removing at most $\mu n |W_1| \le \mu n^2$ edges, 
the complement of $H$ has at most $(9\nu + 21\eta)n^2 + \mu n^2$ edges between $W_1$ and $W_2$. Therefore,
\begin{align} \label{eq:h_edges}
	e(H)
	\ge
	|W_1||W_2| - (9\nu + 21\eta)n^2 - \mu n^2
	>
	|W_1||W_2| - \frac{1}{400}n^2.
\end{align}
By the minimum degree condition of $H$, it suffices to
consider the expansion of sets $W_1' \subseteq W_1$ of size
\[
	\frac{n}{20} \le |W_1'| \le \frac{n-t}{2} - \frac{n}{20}.
\]
If a set $W_1'$ does not expand, then there exists a set $W_2'$ of size
$|W_2'| \ge \frac{n-t}{2} - |W_1|$, where
\[
	e_H(W_1', W_2') = 0.
\]
However, this implies that
$e(H) \le |W_1||W_2| - \frac{1}{400}n^2$,
contradicting the lower bound \eqref{eq:h_edges} on $e(H)$. Hence, $H$ satisfies
Hall's condition, and thus contains a perfect matching.

The perfect matching of $H$ together with the $t$ edges of $E_0$ gives
$t$ paths $(x_i, v_i, w_i)$ of length $2$, and $\frac{n-3t}{2}$ edges
$(v_j, w_j)$, all vertex-disjoint,
thus covering all vertices of the graph $G$ exactly once. Moreover, each
path of length $2$ consists of a pair of edges that are compatible.
Consider the graph $H'$ and incompatibility system $\mathcal{F}'$
obtained from $G$ and $\mathcal{F}$ by the following process.
First, consider only the edges between $W$ and $W^c$.
Then, for each index $i$ with $1\le i\le t$,
remove the vertex $v_i$, add the edge $\{x_i, w_i\}$ (if it
was not an edge in $G$), and make $\{x_i, w_i\}$ incompatible
with the edges in $G$ that are incident to $x_i$ and incompatible
with $\{x_i, v_i\}$, and are incident to $w_i$ and incompatible
with $\{v_i, w_i\}$. One can easily check that $H'$ is a balanced bipartite
graph with bipartition $\{x_1, \ldots, x_t, v_{t+1}, \ldots, v_m\} \cup \{w_1, \ldots, w_m\}$,
where $m = \frac{n-t}{2}$, and
that $\mathcal{F}'$ is a $\mu n$-bounded incompatibility system.
Moreover, $H'$ has minimum degree at least
\[
	\delta(H') \ge \Big(\frac{1}{16} - 8\nu - 18\eta\Big)n - t \ge \frac{n}{20},
\]
and its number of edges is at least
\[
	e(H')
	\ge
	\Big(\frac{n-t}{2}\Big)^2  - \Big(9\nu + 21\eta\Big)n^2
	\ge
	\Big(1 - (45\nu + 105\eta)\Big)\Big(\frac{n-t}{2}\Big)^2.
\]

Suppose that we find a Hamilton cycle $C'$ in $H'$
containing the edges $\{x_1, w_1\}, \ldots, \{x_t, w_t\}$,
that is compatible with $\mathcal{F}'$.
Consider the cycle $C$ obtained from $C'$ by replacing
each edge $\{x_i, w_i\}$ with the path $(x_i, v_i, w_i)$. Note
that $C$ is a Hamilton cycle in $G$, and is compatible
with $\mathcal{F}$ by our definition of $\mathcal{F}'$.
Therefore, this case can be settled through the following theorem,
whose proof will be given in the following section
(we will apply it with 
$\gamma_{\ref{thm:almost_bipartite}} = 64\nu + 144\eta$,
$\beta_{\ref{thm:almost_bipartite}} = 45\nu + 105\eta$,
and $\mu_{\ref{thm:almost_bipartite}} = 3\mu$, where the subscripts
indicate that the constants are being applied to Theorem \ref{thm:almost_bipartite}.  The value of $\mu_{\ref{thm:almost_bipartite}}$ is chosen as above to ensure that $\mu_{\ref{thm:almost_bipartite}} m = \mu_{\ref{thm:almost_bipartite}} \frac{n-t}{2} \ge \mu n$).

\begin{thm} \label{thm:almost_bipartite}
Let $\mu, \beta, \gamma$ be positive reals satisfying
$\gamma + \beta + \sqrt{\mu} < \frac{1}{2000}$.
Let $G$ be a bipartite graph with bipartition $A \cup B$
and minimum degree at least $\frac{m}{10}$, where $|A|=|B|=m$
and $e(A,B) \ge (1 - \beta)m^2$.
Let $\mathcal{F}$ be a $\mu m$-incompatibility
system over $G$. Further suppose that there is a
perfect matching consisting of edges $e_1, \ldots, e_m$.
Then there exists a Hamilton cycle
containing the edges $e_1, \ldots, e_{\gamma m}$,
that is compatible with $\mathcal{F}$.
\end{thm}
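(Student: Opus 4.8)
\medskip

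The plan is to run P\'osa's rotation--extension directly on $G$, following the proof of Theorem~\ref{thm:sub_1}, while keeping the prescribed edges $e_1,\ldots,e_{\gamma m}$ inside the current path throughout; near-completeness of $G$ makes the argument strictly easier than for Theorem~\ref{thm:sub_1} because it eliminates the sparse-cut alternative. Note that $|V(G)|=2m$ and that $\mathcal{F}$ is in particular $\mu(2m)$-bounded, so Lemma~\ref{lem:rotation_1} applies to $G$ (the hypotheses $\mu\le\frac{1}{225}$ and $\delta(G)\ge 15\sqrt{\mu}\cdot 2m$ are satisfied at every stage, with room to spare, under $\gamma+\beta+\sqrt{\mu}<\frac{1}{2000}$). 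First I would preprocess: delete every $\sqrt{\mu}$-correlated edge that is not one of $e_1,\ldots,e_{\gamma m}$; by Proposition~\ref{prop:bound_bad_correlated}(ii) this removes at most $\sqrt{\mu}m$ edges at any vertex, so afterwards $\delta(G)\ge\frac{m}{10}-\sqrt{\mu}m$, $e(A,B)\ge(1-\beta-\sqrt{\mu})m^2$, and the prescribed edges survive. Two structural facts drive the rest: in a bipartite graph a rotation keeps each path endpoint on its own side, and at most $4(\beta+\sqrt{\mu})m$ vertices have degree below $\frac{m}{2}$. Using these I would establish a bipartite version of Lemma~\ref{lem:rotation_2} \emph{with no alternative~(ii)}: for every smooth path $P=(v_0,\ldots,v_\ell)$, repeated rotations with $v_\ell$ fixed produce a set $S$ of attainable replacements for $v_0$, all on $v_0$'s side, with $|S|\ge\bigl(1-O(\beta+\sqrt{\mu})\bigr)m$. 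One application of Lemma~\ref{lem:rotation_1} already yields $\ge d(v_0)-O(\sqrt{\mu}m)$ such endpoints, and as only $O((\beta+\sqrt{\mu})m)$ of them can be low-degree, a second round through a high-degree endpoint pushes $|S|$ to within $O((\beta+\sqrt{\mu})m)$ of $m$; $S$ cannot get stuck because the sparse cut that the proof of Lemma~\ref{lem:rotation_2} would extract from a small $S$ is forbidden by $e(A,B)\ge(1-\beta-\sqrt{\mu})m^2$.

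\medskip

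Next I would set up the main loop. Since $G$ is near-complete bipartite, the matching $e_1,\ldots,e_{\gamma m}$ can be strung greedily into a single path $Q$ on $O(\gamma m)$ vertices in which every prescribed edge is interior (place a connector vertex at each end), and a few extension steps then make both endpoints of $Q$ be $8\sqrt{\mu}$-good and $\sqrt{\mu}$-uncorrelated, so that $Q$ is smooth. From then on I maintain the invariant that the current smooth path contains $e_1,\ldots,e_{\gamma m}$. This is preserved under Lemma~\ref{lem:rotation_1}: a rotation with pivot $v_i$ breaks only $\{v_{i-1},v_i\}$, so I delete from the set $X$ produced by that lemma the at most $\gamma m$ pivots $v_i$ for which $\{v_{i-1},v_i\}$ is prescribed, leaving a set of size $\ge d(v_0)-O(\sqrt{\mu}m)-\gamma m>0$. (A prescribed edge may become a path-endpoint edge after a rotation, but the two endpoints of a prescribed edge are never the two endpoints of the path, so smoothness is unaffected.) Now, given a smooth path $P$ containing the prescribed edges: if $P$ is not Hamiltonian, choose an off-path vertex $z$ on the side opposite $v_0$, and---when $|V(P)|\le(1-2\sqrt{\mu})\cdot 2m$---not $2\sqrt{\mu}$-bad for $P$; after $O(1)$ rotations $|S|\ge(1-O(\beta+\sqrt{\mu}))m$, so $|S\cap N(z)|\ge|S|+\delta(G)-m>0$, and restricting to the $s\in S\cap N(z)$ that are good neighbours of $z$ in $P$ (all but $O(\sqrt{\mu}m)$ of them) leaves a vertex at which we may prepend $z$ to obtain a longer path, which a short argument like that in Lemma~\ref{lem:rotation_1} shows is again smooth. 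If $P$ is Hamiltonian, the same counting---insisting, as in the proof of Lemma~\ref{lem:rotation_2}, that the rotated paths begin with an original edge of $P$ at the new endpoint, so that the $8\sqrt{\mu}$-goodness of $v_\ell$ for $P$ controls compatibility of the closing edge---produces $s\in S\cap N(v_\ell)$ with $\{s,v_\ell\}$ compatible with both end-edges, closing a compatible Hamilton cycle. Every inequality here has slack, since $\frac{1}{10}-O(\beta+\sqrt{\mu}+\mu)>0$ under $\gamma+\beta+\sqrt{\mu}<\frac{1}{2000}$.

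\medskip

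The genuinely delicate part is the transition once $|V(P)|>(1-2\sqrt{\mu})\cdot 2m$, where one can no longer pick the off-path vertex $z$ to avoid all $\le 2\sqrt{\mu}m$ vertices that are $2\sqrt{\mu}$-bad for $P$. Here I would copy the endgame of Theorem~\ref{thm:sub_1}: form an ``almost smooth'' path failing compatibility in at most two pairs of edges, pass to the modified incompatibility systems $\mathcal{F}_1$ and then $\mathcal{F}_2$ in which those pairs are declared compatible, grow a cycle with the rotation machinery for $\mathcal{F}_1$ (resp.\ $\mathcal{F}_2$), then peel off the two offending edges one at a time and return to $\mathcal{F}$, each step enlarging the cycle by $O(1)$ edges; since only $O(\sqrt{\mu}m)$ vertices remain to be absorbed, the accumulated error stays inside the $8\sqrt{\mu}|P|$ budget for goodness, and iterating absorbs every vertex.

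\medskip

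I expect the main obstacle to be not the numerics---near-completeness removes the sparse-cut case of Lemma~\ref{lem:rotation_2} outright, so all margins are wide---but the bookkeeping imposed by the $\gamma m$ prescribed edges: threading them into an \emph{initial} smooth path (delicate precisely when some $e_i$ has both endpoints of degree only about $\frac{m}{10}$), and then certifying that they survive inside the path through every rotation, every extension, and every use of the $\mathcal{F}\to\mathcal{F}_1\to\mathcal{F}_2$ device, all without eroding the smoothness conditions.
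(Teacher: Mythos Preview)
Your plan is sound and will go through, but it diverges from the paper's route in one structural respect, and there is a small gap you should patch.

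The paper does not reuse Lemma~\ref{lem:rotation_1} as a black box. Instead it introduces \emph{proper} paths: paths containing $e_1,\ldots,e_{\gamma m}$ and satisfying $f(V(P)\cap A)=V(P)\cap B$, where $f$ is the matching bijection. A proper path always has an even number of vertices, hence endpoints on opposite sides of the bipartition, so the two endpoints share no common neighbour and the $\sqrt{\mu}$-uncorrelated clause of ``smooth'' becomes vacuous. The paper therefore redefines smoothness, replacing the uncorrelated clause by a condition $v_0\notin X_A$, $v_\ell\notin X_B$ tailored to its extension step, in which a proper path is lengthened by \emph{two} vertices $x,f(x)$ at once rather than one. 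Bespoke analogues of Lemmas~\ref{lem:rotation_1} and~\ref{lem:rotation_2} are then proved for proper smooth paths, and a separate proposition builds the initial proper smooth path (your ``stringing'' step) carefully. What this buys is that correlation between endpoints never has to be tracked, and the matched-pair invariant keeps the parity bookkeeping trivial.

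Your single-vertex-extension approach is more economical in that it avoids new definitions, but it forces you to confront the parity issue you do not mention: after every other extension the two endpoints of your path lie on the \emph{same} side of the bipartition. Concretely, when you prepend $z$ (a neighbour of $s\in S$, hence on the side of $v_\ell$) to form $(z,s,\ldots,v_\ell)$, the vertices $z$ and $v_\ell$ may be $\sqrt{\mu}$-correlated, and then the new path is not smooth and Lemma~\ref{lem:rotation_1} fails at the next step. The fix is cheap---add ``$z$ is $\sqrt{\mu}$-uncorrelated with $v_\ell$'' to the list of constraints on $z$, at a cost of $2\sqrt{\mu}m$ forbidden choices by Proposition~\ref{prop:bound_bad_correlated}(ii)---but it forces you to raise your off-path threshold from $(1-2\sqrt{\mu})\cdot 2m$ to roughly $(1-6\sqrt{\mu})\cdot 2m$, and to carry this extra check through every rotation in the same-side phase and through the $\mathcal{F}\to\mathcal{F}_1\to\mathcal{F}_2$ endgame. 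None of this is hard; it is exactly the bookkeeping the paper's proper-path device was engineered to avoid.
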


\section{Extremal cases}
\label{sec:proof_sub}

In this section, we prove Theorems \ref{thm:almost_clique} and
\ref{thm:almost_bipartite}, thereby completing the proof of
Theorem \ref{thm:sub_2}, and thus Theorem \ref{thm:main}.
Both proofs are very similar to the proof of Theorem \ref{thm:sub_1} in structure.

\subsection{Almost complete graph} \label{subsec:case_1}

In this subsection, we prove Theorem \ref{thm:almost_clique}.
The first step is to prove the following lemma, which can be
seen as an alternative version of Lemma \ref{lem:rotation_2}.
We intentionally impose a slightly weaker minimum degree condition
of $\frac{n}{6}$ compared to that of Theorem \ref{thm:almost_clique}
with later usage in mind.

\begin{lem} \label{lem:rotation_2_mod}
Suppose that $\beta$ and $\mu$ are reals
satisfying $\beta + \sqrt{\mu} < \frac{1}{1200}$.
Let $G$ be an $n$-vertex graph with minimum degree at least $\frac{n}{6}$
and at least ${n \choose 2} - \beta n^2$ edges, and
$\mathcal{F}$ be a $\mu n$-bounded incompatibility system over $G$.
Then for every edge $e$ of $G$ and smooth path $P$ that contains $e$,
there exists a cycle $C$ compatible with $\mathcal{F}$ with
the following properties:
\begin{itemize}
  \setlength{\itemsep}{0pt} \setlength{\parskip}{0pt}
  \setlength{\parsep}{0pt}
\item $C$ contains $e$,
\item $C$ has length $|C| \ge (\frac{6}{7} - 14\sqrt{\mu})n$,
\item $V(C) \supseteq V(P)$, and
\item $|E(C) \setminus E(P)| \le 2|V(C) \setminus V(P)| + 3$.
\end{itemize}
\end{lem}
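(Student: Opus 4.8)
The plan is to follow the proof of Lemma~\ref{lem:rotation_2} closely, with two changes: (a) replace every appeal to $\delta(G)\ge\tfrac n2$ there by the density hypothesis $e(G)\ge\binom n2-\beta n^2$; and (b) replace its one-sided closing step (which relied on $d(v_\ell)\ge(\tfrac12-\alpha)n$ so that $|S|+|N(v_\ell)|>n$) by a \emph{two-sided} rotation argument, since here the endpoints of $P$ may have degree as small as $\tfrac n6$. The hypotheses of Lemma~\ref{lem:rotation_1} are met throughout: $\mu<(\tfrac1{1200})^2<\tfrac1{225}$ and $\delta(G)\ge\tfrac n6\ge15\sqrt\mu n$ because $\sqrt\mu<\tfrac1{1200}$. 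From the proof of Lemma~\ref{lem:rotation_1} I would record one fact, used repeatedly: if $R$ is a smooth path with endpoint $u$ that cannot be extended at $u$, then $N(u)\subseteq V(R)\cup B_1\cup B_2$ with $|B_1\cup B_2|\le 2\mu n+3\sqrt\mu n\le5\sqrt\mu n$, hence $d(u)\le|V(R)|+5\sqrt\mu n$.

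\textbf{Step 1 (extension).} First I would show that if $Q=(v_0,\dots,v_\ell)$ is a smooth path containing $e$ with $|V(Q)|<(\tfrac67-14\sqrt\mu)n$, then $Q$ extends to a longer smooth path containing $e$ using at most one rotation and one edge-extension. Suppose not; then $Q$ is not directly extendable, so Lemma~\ref{lem:rotation_1} gives $X\subseteq N(v_0)\cap V(Q)$ with $|X|\ge d(v_0)-14\sqrt\mu n$ such that each $Q_i:=(v_{i-1},\dots,v_0,v_i,\dots,v_\ell)$ is smooth; since $e\in E(Q)$ is broken by at most one such rotation, discard that pivot so that all $Q_i$ still contain $e$. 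Each $Q_i$ is a one-rotation descendant of $Q$, hence not directly extendable (else $Q$ would extend by one rotation plus one edge), so the recorded fact gives $d(v_{i-1})\le|V(Q)|+5\sqrt\mu n<(\tfrac67-9\sqrt\mu)n$ for all $v_i\in X$. But $e(G)\ge\binom n2-\beta n^2$ forces at most $14\beta n$ vertices to have degree $<(\tfrac67-9\sqrt\mu)n$, and $14\beta n<\tfrac n6-14\sqrt\mu n-1\le|\{v_{i-1}:v_i\in X\}|$ since $14(\beta+\sqrt\mu)<\tfrac{14}{1200}<\tfrac16$. Hence some $v_{i-1}$ has degree exceeding $|V(Q_i)|+5\sqrt\mu n$, so $Q_i$ is directly extendable by Lemma~\ref{lem:rotation_1}; prepending a vertex to $Q_i$ yields a longer smooth path containing $e$, a contradiction. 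One such step adds at most one vertex and at most two edges.

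\textbf{Steps 2--3 (a long non-extendable path, then two-sided closing).} Starting from $P$ and iterating Step 1 (the vertex count strictly increases, so this terminates) produces a smooth path $Q=(v_0,\dots,v_\ell)$ containing $e$ that cannot be extended by one rotation plus one edge; thus neither $Q$ nor any one-rotation descendant of it is directly extendable, $|V(Q)|\ge(\tfrac67-14\sqrt\mu)n$ by Step~1, and $|E(Q)\setminus E(P)|\le2|V(Q)\setminus V(P)|$. Now apply Lemma~\ref{lem:rotation_1} to $Q$ at $v_0$ to get $X\subseteq N(v_0)\cap V(Q)$, $|X|\ge d(v_0)-14\sqrt\mu n$, with each $Q_i$ smooth; discard the $\le1$ pivot breaking $e$ and set $S_0=\{v_{i-1}:v_i\in X\}$, so $|S_0|\ge\tfrac n6-14\sqrt\mu n-1$. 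Each $Q_i$ is non-extendable, so Lemma~\ref{lem:rotation_1} applied to the reversal of $Q_i$ (rotating at $v_\ell$) yields $\ge d(v_\ell)-14\sqrt\mu n$ admissible pivots, each giving a smooth path of length $\ell$ from $v_{i-1}$ to a new endpoint. Since $Q_i$ arises from $Q$ by one rotation, its edge sequence differs from that of $Q$ only in the pivot edge $\{v_0,v_i\}$, and an inspection of the rotation shows the broken edge and the edge incident to the new endpoint are always edges of $Q$; discarding $O(1)$ seam/boundary pivots we obtain $S_1(v_{i-1})$ with $|S_1(v_{i-1})|\ge\tfrac n6-14\sqrt\mu n-O(1)$, each $b'\in S_1(v_{i-1})$ reached by a smooth path from $v_{i-1}$ to $b'$ containing $e$ whose terminal edges at $v_{i-1}$ and $b'$ are edges of $Q$ (if some rotated endpoint lands on $v_\ell$ or $v_{\ell-1}$ we already have the desired cycle, exactly as in the proof of Lemma~\ref{lem:rotation_2}; assume otherwise). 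There are then $\ge(\tfrac n6-14\sqrt\mu n-O(1))^2$ pairs $(a',b')$ with $a'\in S_0$, $b'\in S_1(a')$, each with such a path. If for every pair $\{a',b'\}$ were absent from $G$ or incompatible with one of its two terminal edges, the number of pairs would be at most $\beta n^2+2\mu n\,|V(Q)|+2\mu n\,|V(Q)|\le(\beta+4\mu)n^2$; but $\beta+4\mu\le4(\beta+\sqrt\mu)<\tfrac1{300}<(\tfrac16-14\sqrt\mu)^2$, a contradiction. Hence some pair $(a',b')$ has $a'b'\in E(G)$ with $\{a',b'\}$ compatible with both terminal edges; closing that path with $\{a',b'\}$ gives a cycle $C$ compatible with $\mathcal F$, containing $e$, with $V(C)=V(Q)\supseteq V(P)$ and $|C|=|V(Q)|\ge(\tfrac67-14\sqrt\mu)n$, obtained from $Q$ by at most two rotations and one closing edge, so $|E(C)\setminus E(Q)|\le3$ and $|E(C)\setminus E(P)|\le3+2|V(C)\setminus V(P)|$.

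\textbf{Main obstacle.} I expect the delicate part to be the bookkeeping in Step~3: tracking, after two rotations, which edges of the rotated paths are genuine edges of $Q$ (so that each closing-compatibility constraint is governed by $\mu n$-boundedness of $\mathcal F$), correctly absorbing the $O(1)$ seam/boundary losses, and disposing of the degenerate cases where a rotated endpoint coincides with $v_\ell$ or $v_{\ell-1}$ — the same phenomena handled in the proof of Lemma~\ref{lem:rotation_2}, but requiring re-verification because both the degree hypothesis and the target cycle length are different here.
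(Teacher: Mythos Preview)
Your approach is correct but follows a genuinely different route from the paper's. The paper does not separate ``show the path is long'' from ``close the cycle''; instead it rotates once at $v_0$ to land on a high-degree endpoint (using the density bound to show $|L|\ge(1-14\beta)n$ where $L=\{v:d(v)\ge\tfrac67n\}$, so some $v_{i-1}\in X^-$ lies in $L$), and then applies Lemma~\ref{lem:rotation_1} at \emph{both} endpoints of the resulting path $P'=(w_0,\ldots,w_\ell)$ to obtain $Y\subseteq N(w_0)\cap V(P')$ with $|Y|\ge\tfrac67n-14\sqrt\mu n$ and $Z\subseteq N(w_\ell)\cap V(P')$ with $|Z|\ge\delta(G)-14\sqrt\mu n$. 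Since $|Y|+|Z|>n$, an additive pigeonhole gives an index with $w_{i-1}\in Z$ and $w_i\in Y$, closing the cycle in one stroke and simultaneously yielding $|C|\ge|Y|\ge(\tfrac67-14\sqrt\mu)n$.

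Your argument uses the high-degree idea only in Step~1, to force a non-extendable path to be long, and then closes in Steps~2--3 via a \emph{multiplicative} pair count $|S_0|\cdot\min_{a'}|S_1(a')|>(\text{bad pairs})$, with both $S_0$ and $S_1$ possibly small ($\approx\tfrac n6$). Your bookkeeping is essentially right: after discarding seam pivots the terminal edge at $b'$ is always one of the two $Q$-edges through $b'$, so the $2\mu n|V(Q)|$ bound for incompatibility at $b'$ is valid (one minor slip: the non-edge count for ordered pairs is $\le2\beta n^2$, not $\beta n^2$, but the margin easily absorbs this). The paper's route is cleaner---one rotation, then one additive pigeonhole---and avoids the pair-counting entirely; yours is somewhat more robust in principle (it would survive a weaker density hypothesis, needing only $|S_0|^2\gtrsim\beta n^2$ rather than a large-plus-small sum exceeding $n$), at the cost of the extra bookkeeping you correctly anticipated.
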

\begin{proof}
Let $G$ be a given graph, and $e$ be an edge of $G$.
Let $P$ be a smooth path in $G$ that contains $e$. It suffices to prove
that either there exists a smooth path $P'$ containing $e$ with
$|P'| \ge |P|+1$ and $|E(P') \setminus E(P)| \le 2$, or
a compatible cycle $C$ containing $e$ with $|C| \ge (\frac{6}{7} - 14\sqrt{\mu})n$ 
and $|E(C) \setminus E(P)| \le 3$.
Since then, we can repeatedly find a longer path to eventually
find a cycle with the claimed properties.
Assume that the former event does not occur.

Let $L$ be the
set of vertices that have degree at least $\frac{6}{7}n$ in $G$,
and note that
\[
	\frac{1}{2} \cdot \frac{1}{7}n \cdot |V \setminus L| \le e(G^c) \le \beta n^2.
\]
Hence $|L| \ge (1 - 14\beta)n$. By Lemma \ref{lem:rotation_1}, there exists
a set $X \subset N(v_0) \cap V(P)$ of size
\[
	|X| \ge d(v_0) - 14\sqrt{\mu}n \ge \frac{n}{6} - 14\sqrt{\mu}n > 14\beta n
\]
such that for every vertex $v_i \in X$,
the path $(v_{i-1}, \ldots, v_0, v_i, v_{i+1}, \ldots, v_\ell)$ is smooth.
Thus in particular, we may choose $v_i$ so that $v_{i-1} \in L$.
Let $P' = (w_0, \ldots, w_\ell)$ be the path obtained in this way.
By our assumption on $P$, the path $P'$ cannot be extended by adding one edge.

By Lemma \ref{lem:rotation_1}, there exists a set $Y \subset N(w_0) \cap V(P')$ of size
$|Y| \ge \frac{6}{7}n - 14\sqrt{\mu}n$ such that for every vertex $w_i \in Y$,
the path $(w_{i-1}, \ldots, w_0, w_i, w_{i+1}, \ldots, w_\ell)$ is smooth.
Similarly, there exists a set $Z \subset N(w_\ell) \cap V(P)$ of size
$|Z| \ge \delta(G) - 14\sqrt{\mu}n$ such that for every vertex $w_j \in Z$,
the path $(w_0, w_1, \ldots, w_j, w_\ell, w_{\ell-1}, \ldots, w_{j+1})$ is smooth.
In particular, for each vertex $w_j \in Z$, we see that  $\{w_\ell, w_j\}$
is compatible with both $\{w_j, w_{j-1}\}$ and $\{w_\ell, w_{\ell-1}\}$.
Since 
\[
	|Y| + |Z| 
	\ge \Big(\frac{6}{7}n - 14\sqrt{\mu}n\Big) + (\delta(G) - 14\sqrt{\mu}n)
	\ge \frac{43}{42}n - 28\sqrt{\mu}n
	> n + 2, 
\]
there exists an index $i$
such that $w_{i-1} \in Z$, $w_i \in Y$, and both vertices
$w_{i-1}, w_{i}$ are not incident to $e$. For this index, the cycle
$C = (w_{i-1}, \ldots, w_0, w_i, w_{i+1}, \ldots, w_\ell, w_{i-1})$ is compatible
with $\mathcal{F}$ and contains $e$. Also, $|C| \ge |X| \ge \frac{6}{7}n - 14\sqrt{\mu}n$.
Moreover, $C$ is obtained from $P$ by adding at most three extra edges.
\end{proof}

We now present the proof of Theorem \ref{thm:almost_clique}, which we
restate here for reader's convenience.
\begin{thm*}
Let $\beta$ and $\mu$ be positive reals satisfying
$\beta + 2\sqrt{\mu} \le \frac{1}{1200}$.
Let $G$ be an $n$-vertex graph with minimum degree at least $\frac{n}{5}$ and at least
${n \choose 2} - \beta n^2$ edges, and $\mathcal{F}$ be a $\mu n$-bounded
incompatibility system over $G$.
Then for every edge $e$ of $G$, there exists a Hamilton cycle
containing $e$ that is compatible with $\mathcal{F}$.
\end{thm*}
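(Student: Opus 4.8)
The plan is to mimic the argument used to prove Theorem \ref{thm:sub_1}, now using Lemma \ref{lem:rotation_2_mod} in place of Lemma \ref{lem:rotation_2}. Fix the edge $e = \{a, b\}$ that the Hamilton cycle must contain. As a preprocessing step, just as in the proof of Theorem \ref{thm:sub_1}, I would first delete all edges whose endpoints are $\sqrt{\mu}$-correlated; by Proposition \ref{prop:bound_bad_correlated}(ii) this removes at most $\sqrt{\mu}n$ edges at each vertex, so the resulting graph $G'$ still has minimum degree at least $\frac{n}{5} - \sqrt{\mu}n \ge \frac{n}{6}$ and at least $\binom{n}{2} - (\beta + \sqrt{\mu})n^2$ edges (which is at least $\binom{n}{2} - \beta' n^2$ for a slightly larger $\beta'$ still satisfying the hypothesis of Lemma \ref{lem:rotation_2_mod}). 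I must be a little careful to keep $e$ itself in $G'$; since $a, b$ can be assumed to be $\sqrt{\mu}$-uncorrelated (if they are $\sqrt{\mu}$-correlated, the density of $G$ easily lets me route around, or one simply observes the conclusion is about the much sparser graph — alternatively one can just never delete $e$ and absorb the negligible change). From now on work inside $G'$, abusing notation to call it $G$; note $e \in E(G)$ and $e$ lies in some smooth path (e.g. the path consisting of just the edge $e$, whose two endpoints are $8\sqrt{\mu}$-good and $\sqrt{\mu}$-uncorrelated trivially since a two-vertex path has no bad neighbours forced).

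Now take $C$ to be a \emph{longest} compatible cycle in $G$ that contains the edge $e$. Such a cycle exists: starting from the smooth path $P_0 = (a,b)$, Lemma \ref{lem:rotation_2_mod} produces a compatible cycle through $e$ of length at least $(\tfrac{6}{7} - 14\sqrt{\mu})n$. If $C$ is Hamiltonian we are done, so suppose there is a vertex $z \notin V(C)$. The goal is to produce, from $C$ and $z$, a smooth path $P$ through $e$ with $V(P) \supseteq V(C) \cup \{z\}$, and then feed $P$ back into Lemma \ref{lem:rotation_2_mod} to get a longer compatible cycle through $e$, contradicting maximality. To build $P$: let $B_0$ be the set of $2\sqrt{\mu}$-bad vertices for $C$ (at most $\sqrt{\mu}n$ of them by Proposition \ref{prop:bound_bad_correlated}(i)); since $|C| \ge (\tfrac67 - 14\sqrt{\mu})n$ is large, $z$ has at least $|C| + d(z) - n \ge (\tfrac{6}{7} + \tfrac15 - 1 - 14\sqrt{\mu})n = \Omega(n)$ neighbours on $C$, so a positive-density set $T$ of indices $i$ with $v_i \in N(z)$ and $v_{i-1}, v_i, v_{i+1} \notin B_0$ and $v_i$ a good neighbour of $z$ in $C$. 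Using the $\mu n$-boundedness of $\mathcal{F}$ and Proposition \ref{prop:bound_bad_correlated}(ii) to avoid the $O(\mu n)$ bad and $O(\sqrt\mu n)$ correlated choices, pick two indices $v_i, v_j \in T$ with $\{z,v_i\},\{z,v_j\}$ compatible and $v_{i+1}, v_{j+1}$ $\sqrt{\mu}$-uncorrelated, and moreover chosen so that neither $\{v_{i-1},v_i\}$ nor $\{v_j,v_{j+1}\}$ is the special edge $e$ — this is possible because $e$ kills at most two choices of each index. Then $P = (v_{i+1}, \dots, v_j, z, v_i, v_{i-1}, \dots, v_{j+1})$ contains $e$ (since we avoided deleting it when opening the cycle), is compatible (the three new incidences at $v_j, z, v_i$ are all fine by choice of $v_i, v_j \in T$), has $\sqrt\mu$-uncorrelated endpoints $v_{i+1}, v_{j+1}$, and both endpoints are $8\sqrt\mu$-good for $P$ since they lie outside $B_0$ and the bad-neighbour sets of $P$ and $C$ differ in at most the three vertices $v_i, z, v_j$. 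Hence $P$ is smooth, and Lemma \ref{lem:rotation_2_mod} gives a compatible cycle through $e$ with vertex set containing $V(P) \supseteq V(C) \cup \{z\}$, which is longer than $C$ — contradiction.

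The main obstacle, and the only place care is genuinely needed, is bookkeeping around the fixed edge $e$: one must ensure $e$ survives the correlation-cleaning step, that $e$ stays on the path after the cycle is cut open at $v_i$ and $v_j$ (hence the requirement that $v_{i-1}v_i$ and $v_jv_{j+1}$ are not $e$), and that Lemma \ref{lem:rotation_2_mod} is invoked in the ``contains $e$'' mode throughout. In fact, in contrast to Theorem \ref{thm:sub_1}, there is no extremal alternative to worry about: the edge-density hypothesis $e(G) \ge \binom n2 - \beta n^2$ forces that nearly every vertex has degree close to $n$, so the long cycle guaranteed by Lemma \ref{lem:rotation_2_mod} already covers a $(\tfrac67 - o(1))$-fraction of the vertices, and in the extension step $z$ automatically has linearly many neighbours on $C$; there is no analogue of the sparse-cut dichotomy, which is exactly why the proof is ``much simpler'' than that of Theorem \ref{thm:sub_1}. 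One small point to verify is that the length bound from Lemma \ref{lem:rotation_2_mod} together with the density hypothesis really does make all the ``$\Omega(n)$'' counts positive for the given numerical constraint $\beta + 2\sqrt\mu \le \tfrac{1}{1200}$; this is a routine check with the stated constants.
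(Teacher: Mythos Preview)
Your outline follows the paper's approach closely, but there is a genuine gap in the extension step. You claim that the set $T$ of indices $i$ with $v_i \in N(z)$, $v_{i-1},v_i,v_{i+1}\notin B_0$, and \emph{$v_i$ a good neighbour of $z$ in $C$} has positive density. The last condition, however, is not controlled for an arbitrary $z$. The $\mu n$-boundedness gives, for each fixed $v_i$, at most $2\mu n$ vertices $w$ having $v_i$ as a bad neighbour; it gives no bound on the number of bad neighbours of a \emph{fixed} vertex $z$. If $z$ happens to be $2\sqrt{\mu}$-bad for $C$, then possibly every neighbour of $z$ on $C$ is a bad neighbour, and $T$ is empty. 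You can avoid this by choosing $z\notin B_0$ --- but that is only possible when $|V\setminus V(C)|>|B_0|$, i.e.\ when $|C|<(1-\sqrt{\mu})n$. In the remaining range $(1-\sqrt{\mu})n\le |C|<n$ there may be a single leftover vertex and it may lie in $B_0$; your argument then breaks down because you cannot certify compatibility of the pairs $\big(\{v_{j-1},v_j\},\{v_j,z\}\big)$ and $\big(\{z,v_i\},\{v_i,v_{i-1}\}\big)$.

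The paper handles exactly this residual case by an extra trick you have omitted: it temporarily declares those two possibly-bad pairs compatible (modified system $\mathcal{F}_1$), applies Lemma~\ref{lem:rotation_2_mod} to get a cycle $C_1$, then \emph{cuts $C_1$ open at} $\{v_{j-1},v_j\}$ to obtain a path $P_1$ not containing that edge. The key calculation --- and the reason Lemma~\ref{lem:rotation_2_mod} records the bound $|E(C)\setminus E(P)|\le 2|V(C)\setminus V(P)|+3$ --- is that because $|C|\ge(1-\sqrt{\mu})n$, the number of new edges introduced is $O(\sqrt{\mu}n)$, so the endpoints $v_{j-1},v_j$ (which lie outside $B_0$) remain $8\sqrt{\mu}$-good for $P_1$. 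One more iteration (system $\mathcal{F}_2$, then cut at $\{v_i,v_{i-1}\}$) removes the second artificial compatibility and yields a genuinely $\mathcal{F}$-smooth path strictly containing $V(C)\cup\{z\}$. Without this two-step ``cut and re-close'' manoeuvre, the proof is incomplete.
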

\begin{proof}
Let $G$ be a given graph, and $e$ be an edge of $G$.
Consider the graph obtained from $G$ by removing all edges whose
two endpoints are $\sqrt{\mu}$-correlated (except $e$).
By Proposition~\ref{prop:bound_bad_correlated},
the resulting graph has minimum degree at least
$\Big(\frac{1}{5} - \sqrt{\mu}\Big)n \ge \frac{n}{6}$, and has at least
${n \choose 2} - (\beta + \sqrt{\mu})n^2$ edges.
By abusing notation, we use $G$ to denote this graph.
Note that Lemma \ref{lem:rotation_2_mod} can be applied to this graph
since $(\beta + \sqrt{\mu}) + \sqrt{\mu} \le \frac{1}{1200}$.

Let $C=(v_0, v_1, \ldots, v_\ell)$ be a  cycle in $G$ compatible
with $\mathcal{F}$, of maximum length. By Lemma \ref{lem:rotation_2_mod},
we have $|C| \ge \Big(\frac{6}{7} - 14\sqrt{\mu}\Big)n $.
Throughout the proof, for a set $X \subset V$,
define $X^+ = \{ v_{i+1} \,:\, v_i \in X \cap V(C)\}$, and
$X^- =  \{ v_{i-1} \,:\, v_i \in X \cap V(C)\}$ (where index
addition and subtraction are modulo $\ell+1$).
If $C$ is a Hamilton cycle, then
we are done. Otherwise, there exists a vertex $z$ not
in the cycle.

Define $B_0$ as the set of vertices that are $2\sqrt{\mu}$-bad
for $C$. By Proposition \ref{prop:bound_bad_correlated},
we know that $|B_0| \le \sqrt{\mu}n$. Hence if $|C| < (1-\sqrt{\mu})n$,
then we may take $z$ to be a vertex not in $B_0$. In this case,
define $B_1$ as the set of bad neighbors of $z$ in $C$. By definition,
we have $|B_1| \le 2\sqrt{\mu}|C|$.
Otherwise if $|C| \ge (1-\sqrt{\mu})n$, then let $z$ be an
arbitrary vertex not in $C$, and define $B_1 = \emptyset$.

Since
\[
	|N(z) \cap V(C)| \ge d(z) + |V(C)| - n
	\ge
	\Big(\frac{n}{5} - \sqrt{\mu}n\Big) + \Big(\frac{6}{7}n - 14\sqrt{\mu}n\Big) - n
	\ge
	7\sqrt{\mu}n
\]
and
\[
	|B_0| + |B_0^+| + |B_0^-| + |B_1| \le 5\sqrt{\mu}n,
\]
the set $T = \{ v_i \in V(C) \,|\, v_i \in N(z), v_{i-1}, v_{i+1} \notin B_0, v_i \notin B_0 \cup B_1 \}$
has cardinality
\[
	|T| \ge |N(z) \cap V(C)| - (|B_0| + |B_0^+| + |B_0^-| + |B_1|) \ge 2\sqrt{\mu}n.
\]
Take a vertex $v_i \in T$ not incident to $e$. Since $\mathcal{F}$ is $\mu n$-bounded,
there are at most $\mu n$ vertices $x \in T$ for which
the pair of edges $\{z, v_i\}$ and $\{z, x\}$ is incompatible.
Also, by Proposition \ref{prop:bound_bad_correlated}, there are
at most $\sqrt{\mu}n$ vertices that are $\sqrt{\mu}$-uncorrelated
with $v_{i+1}$. Therefore, since $|T| \ge 2\sqrt{\mu}n > \mu n + \sqrt{\mu}n + 5$, we
can find a vertex $v_j \in T$ not incident to $e$ and not $v_i, v_{i+1}$, or $v_{i-1}$
for which the pair of edges
$\{z, v_i\}$ and $\{z, v_j\}$ is compatible, and the
pair of vertices $v_{i+1}$ and $v_{j+1}$ is $\sqrt{\mu}$-uncorrelated.
Consider the path
$P = (v_{i+1}, \ldots, v_j, z, v_i, v_{i-1}, \ldots, v_{j+1})$.
First, the two endpoints of $P$ are
$\sqrt{\mu}$-uncorrelated by the choice of $v_i$ and $v_j$.
Second, both $v_{i+1}$ and $v_{j+1}$ have at most
$2\sqrt{\mu}|C| + 3$ bad neighbors in $P$, since
$v_{i+1}, v_{j+1} \notin B_0$, and the set of bad neighbors
in $P$ and in $C$ can differ only in at most three vertices
$v_j, z$, and $v_i$. Third, $P$ contains $e$ since $C$ does,
and $v_i, v_j$ are not incident to $e$.

To check whether $P$ is compatible, it suffices to check the
compatibility of three pairs of edges
$\Big(\{v_{j-1}, v_j\}, \{v_j, z\}\Big)$,
$\Big(\{v_j, z\}, \{z, v_i\}\Big)$, and
$\Big(\{z, v_i\}, \{v_i, v_{i-1}\}\Big)$. The pair of edges
$\{v_j, z\}$ and $\{z, v_i\}$ is compatible by our choice
of $v_i$ and $v_j$.
If $|C| < (1-\sqrt{\mu})n$, then by the choice of $z$ and
the set $B_1$, since $v_i, v_j \notin B_1$ (this follows from 
$v_i, v_j \in T$), we further see
that the other two pairs of edges are both compatible,
thus implying that $P$ is compatible, and therefore smooth.
This by Lemma \ref{lem:rotation_2_mod} gives a compatible cycle
containing $e$ that is longer than $C$, and contradicts
the maximality of $C$.

Therefore, we must have $|C| \ge (1-\sqrt{\mu})n$.
In this case, $P$ is `almost' smoothly compatible with $\mathcal{F}$,
in the sense that it satisfies all the conditions except for possibly the
compatibility of two pairs of edges.
Define $\mathcal{F}_1$ as the incompatibility system obtained from
$\mathcal{F}$ by making the pairs of edges
$\Big(\{v_{j-1}, v_j\}, \{v_j, z\}\Big)$ and
$\Big(\{z, v_i\}, \{v_i, v_{i-1}\}\Big)$ to be compatible.
Note that $P$ is smoothly compatible with $\mathcal{F}_1$.
Hence by Lemma \ref{lem:rotation_2_mod}, we can find a cycle
$C_1$ containing $e$, compatible with $\mathcal{F}_1$, with
$V(C_1) \supseteq V(P) \supseteq V(C)$ and 
\[
	|E(C_1) \setminus E(C)|
	\le
	|E(C_1) \setminus E(P)| + 2
	\le
	2|V(C_1) \setminus V(C)| + 5.
\]

Let $P_1$ be the path obtained from $C_1$ by removing the
edge $\{v_{j-1}, v_j\}$ if it is in $C_1$ (if not, then skip the rest of the paragraph).
We claim that $P_1$ is smoothly compatible with $\mathcal{F}_1$.
First, it is compatible with $\mathcal{F}_1$, since $C_1$ is.
Second, the two endpoints are $\sqrt{\mu}$-uncorrelated,
since we started by removing all edges whose two
endpoints are $\sqrt{\mu}$-correlated.
Third, since $v_j \notin B_0$, we know that
$v_j$ is $2\sqrt{\mu}$-good for $C$. Since $V(C_1) \supseteq V(C)$, it
follows that $v_j$ has at most
\[
	2\sqrt{\mu}|C| + 2|E(C_1) \setminus E(C)| \le 2\sqrt{\mu}|C| + 4(|C_1|-|C|) + 10 \le 6\sqrt{\mu} |C_1| = 6\sqrt{\mu}|P_1|
\]
bad neighbors in $P_1$, where the final inequality follows from
$|C| \ge (1-\sqrt{\mu})n$. A similar estimate holds for the other endpoint $v_{j-1}$.
Let $\mathcal{F}_2$ be the incompatibility system obtained from
$\mathcal{F}$ by making the pair $\Big(\{z, v_i\}, \{v_i, v_{i-1}\}\Big)$ to be compatible.
Note that $P_1$ is smoothly compatible with $\mathcal{F}_2$ as well,
since $P_1$ is smoothly compatible with $\mathcal{F}_1$ and
does not contain the edge $\{v_{j-1}, v_j\}$.
Thus by Lemma \ref{lem:rotation_2_mod}, we can find a cycle $C_2$
containing $e$ that is compatible with $\mathcal{F}_2$,
whose vertex set contains $V(P_1)$. 

Let $P_2$ be the path obtained from $C_2$
by removing the edge $\{v_i, v_{i-1}\}$ if it is in $C_2$
(if not, then it contradicts the maximality of $C$). 
Similarly as before, the path $P_2$
is smoothly compatible with $\mathcal{F}$, and thus by Lemma \ref{lem:rotation_2_mod},
we can find a cycle whose vertex set contains $V(C) \cup \{z\}$,
contradicting the maximality of $C$. Therefore, the cycle
$C$ is a Hamilton cycle.
\end{proof}

\subsection{Almost complete bipartite graph} \label{subsec:case_2}

Let $\mu, \beta, \gamma$ be positive reals satisfying
\[
	\gamma + \beta + \sqrt{\mu} < \frac{1}{2000}.
\]

Let $G$ be a $2m$-vertex bipartite graph with bipartition $A \cup B$
such that $|A| = |B| = m$, with minimum degree at least $\frac{m}{10}$
and at least $(1 - \beta) m^2$ edges. Let $\mathcal{F}$ be a
$\mu m$-bounded incompatibility system defined over $G$.
Further suppose that a perfect matching $e_1 = \{a_1, b_1\}, \ldots, e_m = \{a_m, b_m\}$
satisfying $a_i \in A$ and $b_i \in B$ is given.
Let $f$ be a bijection between $A$ and $B$ defined by the
relation $f(a_i) = b_i$ and $f(b_i) = a_i$ for each $i=1,\ldots,m$.
We will fix these notations throughout the section.
All lemmas and results in this subsection are based
on these notations.

\begin{defn}
A path or a cycle $H$ of $G$ is \emph{proper} if it contains all edges
$e_1, \ldots, e_{\gamma m}$ and
satisfies $f(V(H) \cap A) = V(H) \cap B$.
\end{defn}

We restrict our attention to proper paths and cycles.
The condition $f(V(H) \cap A) = V(H) \cap B$ ensures that the two endpoints
of the path are in $A$ and in $B$, respectively. We consider proper paths because it is a convenient way of forcing such property while using the rotation-extension technique. For example, if for a proper path $P = (v_0, v_1, \ldots, v_\ell)$
the vertex $v_0$ is adjacent to some vertex $x \notin V(P)$, then the path
$(f(x), x, v_0, v_1, \ldots, v_\ell)$ forms a proper path that is longer than $P$. 
To ensure that the new path is compatible with $\mathcal{F}$, we need
the compatibility of the two pairs $(\{f(x), x)\}, \{x, v_0\})$ and
$(\{x, v_0\}, \{v_0, v_1\})$, but it might be the case that there are no 
neighbors of $v_0$ giving the compatibility of these pairs.
Thus we slightly modify the definition of smooth paths.
Define $X_A$ as the set of vertices $x \in A$ for which
there are at least $\sqrt{\mu} m$ indices $i$ such that
the pair of edges $e_i = \{a_i, b_i\}$ and $\{x, b_i\}$ is
incompatible. By counting the number of pairs of edges $\{x, b_i\}$ and
$\{a_i, b_i\}$ that are incompatible in two ways, we obtain
the inequality
\begin{align} \label{eq:bound_x_a}
	|X_A| \cdot \sqrt{\mu}m \le \mu m \cdot m \quad\Longrightarrow\quad |X_A| \le \sqrt{\mu}m. 
\end{align}
Similarly define $X_B \subseteq B$, and we get  $|X_B| \le \sqrt{\mu}m$.
Throughout this section, we will use the following definition of smooth paths.

\begin{defn}
A proper path $P=(v_{0},v_{1},\cdots,v_{\ell})$ with $v_0 \in A$ and $v_\ell \in B$ 
is \emph{smoothly compatible with $\mathcal{F}$} (or \emph{smooth} in short if $\mathcal{F}$ is
clear from the context) if
\vspace{-0.2cm}
\begin{itemize}
  \setlength{\itemsep}{0pt} \setlength{\parskip}{0pt}
  \setlength{\parsep}{0pt}
\item[(i)] $P$ is compatible with $\mathcal{F}$,
\item[(ii)] both endpoints $v_{0}$ and $v_{\ell}$ are $8\sqrt{\mu}$-good
for $P$,
\item[(iii)] $v_0 \notin X_A$ and $v_\ell \notin X_B$.
\end{itemize}
\end{defn}

Note that we no longer impose the two endpoints to be $\sqrt{\mu}$-uncorrelated. 
This is because the pair of vertices $v_0 \in A$ and $v_\ell \in B$ always have no common neighbors 
(recall that the given graph is bipartite). Hence all proper paths automatically satisfy the condition
that the two endpoints are $\sqrt{\mu}$-uncorrelated.

The following modification of Lemma~\ref{lem:rotation_1} will be used.

\begin{lem} \label{lem:rotation_bipartite}
Let $G$ be given as above.
Suppose that $P=(v_0, v_1, \ldots,v_\ell)$ is a proper smooth path in $G$,
where there is no vertex $x \notin V(P)$ for which $(f(x), x, v_0, \ldots, v_\ell)$ is a proper smooth path. Then there exists a set
$Z\subseteq N(v_{0})\cap V(P)$ of size at least
\[ |Z| \ge d(v_0)-(25\sqrt{\mu} + 2\gamma)m \]
such that for every vertex $v_{i}\in Z$, the path
$(v_{i-1},\ldots,v_1,v_{0},v_{i},v_{i+1},\ldots,v_{\ell})$ is a proper smooth path.
\end{lem}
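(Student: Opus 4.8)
The plan is to mimic the proof of Lemma~\ref{lem:rotation_1}, with three modifications forced by the bipartite/proper setting: an extension now appends \emph{two} vertices $x$ and $f(x)$ (so the new endpoint is $f(x)$, not $x$); we must not destroy properness; and the hypothesis that $v_0,v_\ell$ are $\sqrt{\mu}$-uncorrelated is replaced by the fact that $v_0$ and $v_\ell$, lying in opposite parts, have no common neighbour. First some easy observations that make properness cheap. Since $P$ is proper, $V(P)$ is a union of matching pairs $\{a_j,b_j\}$ and contains every $e_j$ with $j\le \gamma m$; in particular $|P|\ge 2\gamma m$, which is large for $m$ large. A rotation leaves $V(P)$ unchanged, hence automatically preserves $f(V\cap A)=V\cap B$, and keeps the path proper provided the broken edge is not one of $e_1,\dots,e_{\gamma m}$. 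Likewise, if $x\notin V(P)$ is a neighbour of $v_0$ then $f(x)\notin V(P)$ as well, so $(f(x),x,v_0,\dots,v_\ell)$ is a proper path. Finally, since $v_0\in A$, every neighbour of $v_0$ lies in $B$; hence for $v_i\in N(v_0)\cap V(P)$ the index $i$ is odd and $v_{i-1}\in A$.

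\emph{Trapping step.} Let $B_1$ be the set of $x\in N(v_0)$ such that $\{v_0,x\}$ is incompatible with $\{v_0,v_1\}$ or with $\{v_0,v_\ell\}$, or $\{x,f(x)\}$ is incompatible with $\{x,v_0\}$, or $\{f(x),v_\ell\}$ is incompatible with $\{f(x),x\}$; and let $B_2=\{x\in N(v_0):\ f(x)\in X_A\ \text{or}\ f(x)\ \text{is}\ 2\sqrt{\mu}\text{-bad for}\ P\}$. Using that $\mathcal F$ is $\mu m$-bounded (first two clauses of $B_1$), that $v_0\notin X_A$ (for the clause $\{x,f(x)\}$ versus $\{x,v_0\}$: there $\{x,f(x)\}$ is exactly the matching edge at $x$, so this clause counts indices $k$ with $e_k$ incompatible with $\{v_0,b_k\}$), that $v_\ell\notin X_B$ (for the clause $\{f(x),v_\ell\}$ versus $\{f(x),x\}$, analogously), that $f$ is a bijection, and Proposition~\ref{prop:bound_bad_correlated}, one gets $|B_1|\le 2\mu m+2\sqrt{\mu}m$ and $|B_2|\le 2\sqrt{\mu}m$. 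I claim that any $x\in N(v_0)$ with $x\notin V(P)\cup B_1\cup B_2$ makes $(f(x),x,v_0,\dots,v_\ell)$ a proper smooth path, contradicting the hypothesis. Indeed: the two new adjacent pairs $(\{f(x),x\},\{x,v_0\})$ and $(\{x,v_0\},\{v_0,v_1\})$ are compatible since $x\notin B_1$; the endpoint $v_\ell$ stays $8\sqrt{\mu}$-good because its only candidate new bad neighbours are $v_0$ and $f(x)$, both ruled out by $x\notin B_1$, while $x$ itself cannot be a bad neighbour of $v_\ell$ as $x,v_\ell\in B$ are non-adjacent (this is where bipartiteness replaces uncorrelatedness); the endpoint $f(x)$ is $8\sqrt{\mu}$-good because $x\notin B_2$ forces $f(x)$ to have fewer than $2\sqrt{\mu}|P|$ bad neighbours already in $P$ and $|P|\ge 2\gamma m$ absorbs the $O(1)$ change; and $f(x)\notin X_A$, $v_\ell\notin X_B$ give condition (iii). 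Hence $N(v_0)\setminus V(P)\subseteq B_1\cup B_2$.

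\emph{Rotation step.} Let $B_3=\{v_i\in N(v_0)\cap V(P):\ v_i\ \text{is a bad neighbour of}\ v_0\ \text{in}\ P\}$, so $|B_3|<8\sqrt{\mu}|P|\le 16\sqrt{\mu}m$ (using $|P|\le 2m$); let $B_4=\{v_i\in V(P):\ v_{i-1}\in X_A\ \text{or}\ v_{i-1}\ \text{is}\ 2\sqrt{\mu}\text{-bad for}\ P\}$, so $|B_4|\le 2\sqrt{\mu}m$; and let $B_5=\{v_i:\ \{v_{i-1},v_i\}\in\{e_1,\dots,e_{\gamma m}\}\}\cup\{v_1,v_\ell\}$, so $|B_5|\le \gamma m+2$. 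Put $Z=(N(v_0)\cap V(P))\setminus(B_1\cup B_3\cup B_4\cup B_5)$. For $v_i\in Z$ the rotated path $P''=(v_{i-1},\dots,v_0,v_i,\dots,v_\ell)$ is proper ($v_i\notin B_5$); compatible (its only new adjacent pairs are at $v_0$ and at $v_i$, handled by $v_i\notin B_1$ and $v_i\notin B_3$); keeps $v_\ell$ $8\sqrt{\mu}$-good (its only candidate new bad neighbours are $v_0$, ruled out by $v_i\notin B_1$, and $v_i$, non-adjacent to $v_\ell$); has the new endpoint $v_{i-1}$ both $8\sqrt{\mu}$-good (since $v_i\notin B_4$ it is $2\sqrt{\mu}$-good for $P$, and $P''$ differs from $P$ in $O(1)$ edges while $|P|$ is large) and outside $X_A$ (again $v_i\notin B_4$); and satisfies $v_\ell\notin X_B$ throughout. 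So $P''$ is proper smooth. Exactly as in Lemma~\ref{lem:rotation_1}, from $N(v_0)\setminus V(P)\subseteq B_1\cup B_2$ one gets $Z\supseteq N(v_0)\setminus(B_1\cup B_2\cup B_3\cup B_4\cup B_5)$, whence
\[
	|Z|\ \ge\ d(v_0)-|B_1|-|B_2|-|B_3|-|B_4|-|B_5|\ \ge\ d(v_0)-2\mu m-22\sqrt{\mu}m-\gamma m-2,
\]
which is at least $d(v_0)-(25\sqrt{\mu}+2\gamma)m$ for $m$ large, using $\mu\le\sqrt{\mu}$.

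The main obstacle is the bookkeeping in the trapping step. One must notice that the sets $X_A,X_B$ are defined precisely so that the global conditions $v_0\notin X_A$ and $v_\ell\notin X_B$ bound the number of neighbours $x$ of $v_0$ for which the newly introduced matching edge $\{x,f(x)\}$ is incompatible with $\{x,v_0\}$ (respectively $\{f(x),v_\ell\}$ incompatible with $\{f(x),x\}$) — this is the only genuinely new ingredient beyond Lemma~\ref{lem:rotation_1} — and that, because $G$ is bipartite, $v_0$ and $v_\ell$ have no common neighbour, so the $\sqrt{\mu}$-uncorrelatedness condition used in Lemma~\ref{lem:rotation_1} is simply unnecessary here. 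The rest is a line-by-line translation of the P\'osa rotation--extension bookkeeping, with the harmless extra vertex $f(x)$ (resp.\ the extra constraint of not breaking $e_1,\dots,e_{\gamma m}$) tracked along the way.
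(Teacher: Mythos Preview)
Your proof is correct and follows essentially the same approach as the paper's: the same trapping/rotation structure, the same use of $v_0\notin X_A$, $v_\ell\notin X_B$ to bound the ``matching-edge'' incompatibilities, and the same observation that bipartiteness makes the uncorrelatedness condition vacuous. Your organisation of the bad sets differs only cosmetically (you merge the paper's $B_0$ and $B_1$, and you are slightly sharper in $B_5$ by forbidding only the $\gamma m$ indices with $\{v_{i-1},v_i\}\in\{e_1,\dots,e_{\gamma m}\}$ rather than all $2\gamma m$ incident indices). One small arithmetic slip: your bounds $|B_2|,|B_4|\le 2\sqrt{\mu}m$ should be $3\sqrt{\mu}m$, since Proposition~\ref{prop:bound_bad_correlated} is applied with $n=2m$ (giving $2\sqrt{\mu}m$ many $2\sqrt{\mu}$-bad vertices) and you must add $|X_A|\le\sqrt{\mu}m$; this costs an extra $2\sqrt{\mu}m$ in the final estimate but the target $(25\sqrt{\mu}+2\gamma)m$ still absorbs it.
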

\begin{proof}
Define
\begin{eqnarray*}
B_{0} & = & \Big\{b \in B: \{v_0, b\} \text{ and } \{b, f(b)\} \text{ are incompatible} \Big\} \\
& & \qquad \qquad \qquad \qquad \cup \Big\{b \in B: \{v_\ell, f(b)\} \text{ and } \{b, f(b)\} \text{ are incompatible} \Big\}, \\
B_{1} & = & \Big\{ b \in B \,:\, \{v_0, b\} \mbox{ is incompatible with } \{v_0, v_1\} \mbox{ or } \{v_0, v_{\ell}\} \mbox{ (if exists) } \Big\}, \text{ and}\\
B_{2} & = & \Big\{ a \in A \,:\, a\mbox{ is }2\sqrt{\mu}\mbox{-bad for \ensuremath{P}}\Big\} \cup X_A.
\end{eqnarray*}
We have $|B_0| \le 2\sqrt{\mu}m$ since $v_0 \notin X_A$ and $v_\ell \notin X_B$,
$|B_1| \le 2 \mu m$ since $\mathcal{F}$ is $\mu m$-bounded, and
$|B_2| \le 3\sqrt{\mu} m$ by Proposition \ref{prop:bound_bad_correlated} (with $n = 2m$) and \eqref{eq:bound_x_a}.
Suppose that there exists a neighbor $b$ of $v_0$ such that $b \notin V(P) \cup B_0 \cup B_1 \cup f(B_2)$,
and consider the path $P' = (f(b), b, v_0, v_1, \ldots, v_\ell)$ (note that $f(b) \notin V(P)$ since $P$ is proper). Note that $P'$ is a proper path.
We claim that it in fact is a proper smooth path.
It is compatible since $b \notin B_0 \cup B_1$. The set of bad neighbors of $f(b)$ in
$P'$ is identical to the set of bad neighbors of $f(b)$ in $P$ since $f(b)$ and $v_0$ are
not adjacent.
Similarly, the set of bad neighbors of $v_\ell$ in $P'$ is identical to that in $P$ since
$b \notin B_0 \cup B_1$, and $v_\ell$ is not adjacent to $b$. Therefore, we see that the two endpoints of $P'$ are $8\sqrt{\mu}$-good.
Finally $v_\ell \notin X_B$ since $P$ is a smooth path, and $f(b) \notin X_A$ since $b \notin f(B_2)$.
Therefore $P'$ in fact is a proper smooth path, contradicting our assumption.

Hence all neighbors of $v_0$ are in $V(P) \cup B_0 \cup B_1 \cup f(B_2)$.
Further define
\begin{eqnarray*}
B_{3} & = & \Big\{ w \in V(P) \,:\, w \,\text{ is a bad neighbor of }v_{0}\mbox{ in}\, P, \text{ or intersects some edge } e_1, e_2, \ldots, e_{\gamma m} \Big\}.
\end{eqnarray*}
We have $|B_3| \le 8\sqrt{\mu} |P| + 2\gamma m$, since $v_0$ is $8\sqrt{\mu}$-good for $P$.
Define $B_2^{+} = \{v_{i+1} \,|\, v_i \in B_2 \cap V(P)\}$,
and $Z=\Big(N(v_{0})\cap V(P)\Big)\setminus(B_{1}\cup B_{2}^{+}\cup B_{3})$.
Since all neighbors of $v_0$ are in $V(P) \cup B_0 \cup B_1 \cup f(B_2)$, 
\begin{align*}
	|Z|
	\ge\,&
	|N(v_{0})|- |B_0| - |B_1| - 2|B_2| - |B_3| \\
	\ge\,&
	|N(v_0)| - 2\sqrt{\mu}m - 2\mu m - 6 \sqrt{\mu}m - (8 \sqrt{\mu} \cdot 2m + 2\gamma m)
	\ge
	d(v_0) - 25\sqrt{\mu}m - 2\gamma m\,.
\end{align*}
One can check as in the proof of Lemma~\ref{lem:rotation_1} that
$Z$ satisfies our claim.
\end{proof}

We start by showing that the class of proper smooth paths is
non-empty. This will be achieved in two steps: first proving
the existence of a proper path, and then of a proper smooth path.

\begin{prop} \label{prop:proper_exist}
Let $G$ be a graph given as above.
\begin{itemize}
  \setlength{\itemsep}{0pt} \setlength{\parskip}{0pt}
  \setlength{\parsep}{0pt}
\item[(i)] There exists a proper compatible path $P$ with $|V(P)| \le 10\gamma m - 9$.
\item[(ii)] There exists a proper smooth path $P'$ with $|V(P')| \le 10\gamma m$.
\end{itemize}
\end{prop}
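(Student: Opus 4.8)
The plan is to prove part~(i) by an explicit greedy construction that strings the required matching edges $e_1,\dots,e_{\gamma m}$ together one at a time, and then to deduce part~(ii) from part~(i) by a short extension at each endpoint. It is convenient to think of a proper path as a chain of \emph{super-vertices} $\{a_i,b_i\}$, consecutive ones joined by a connecting edge, where typically each super-vertex is traversed along its matching edge $e_i$; a \emph{helper hop} from an endpoint $y\in B$ to an unused super-vertex $\{a_h,b_h\}$ adds the two edges $\{y,a_h\}$ and $\{a_h,b_h\}$ and keeps the endpoint on the same side of the bipartition. Two preliminary facts: by \eqref{eq:bound_x_a} and its $B$-analogue, $|X_A|,|X_B|\le\sqrt\mu m$; and since $e(A,B)\ge(1-\beta)m^2$, all but $\le\sqrt\beta m$ vertices on each side have degree at least $(1-\sqrt\beta)m$ (call these \emph{rich}). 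Call $w$ a \emph{good neighbour} of $v$ if $\{v,w\}\in E(G)$ and $\{v,w\}$ is compatible with the matching edge at $w$; a rich vertex outside $X_A\cup X_B$ has at least $(1-\sqrt\beta-\sqrt\mu)m$ good neighbours, and a vertex with fewer than $m/20$ good neighbours (a \emph{poor} vertex) exists for at most $20\mu m$ vertices on each side, by counting incompatible pairs of edges exactly as in \eqref{eq:bound_x_a}.

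For (i), I grow a proper compatible path $P$ through $S_1,\dots,S_{\gamma m}$ in order, maintaining the invariant that the current endpoint $y$ is not poor. To append $S_{i+1}$, assuming first that neither $a_{i+1}$ nor $b_{i+1}$ is poor: do one helper hop from $y$ to a vertex $z_1$ chosen rich and outside $X_A\cup X_B$; then a second helper hop from $z_1$ landing at a good neighbour of $a_{i+1}$ (if $y\in B$) or of $b_{i+1}$ (if $y\in A$); then traverse $S_{i+1}$ via $e_{i+1}$, which yields the new (non-poor) endpoint. Each step goes through because a non-poor vertex has $\ge m/20$ good neighbours, $z_1$ has $\ge(1-\sqrt\beta-\sqrt\mu)m$ good neighbours, and the remaining restrictions forbid only $o(m)$ choices: incompatibility with the single fixed path-edge at the current endpoint ($\le\mu m$), being non-rich, in $X_A\cup X_B$, or poor ($\le\sqrt\beta m+2\sqrt\mu m+20\mu m$), and reusing a super-vertex ($O(\gamma m)$); moreover the set of indices $h$ with $a_h$ a good neighbour of $z_1$ and $b_h$ a good neighbour of $a_{i+1}$ has size at least $(1/20-\sqrt\beta-\sqrt\mu)m>0$. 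Since $\mu,\beta,\gamma<1/2000$, the number of admissible choices is a positive fraction of $m$ at every step, so the construction never stalls, and appending $S_{i+1}$ costs at most two helper super-vertices, i.e.\ at most six new vertices.

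The one genuine obstacle is a required super-vertex $S_{i+1}$ one (or both) of whose endpoints is poor: a poor vertex $v$ has almost all of its neighbours $w$ incompatible (at $w$) with the matching edge at $w$, so such an endpoint cannot be entered or left by a helper traversed along its matching edge. I handle this with a local gadget: the path-neighbour of a poor endpoint of $S_{i+1}$ is taken to be an auxiliary super-vertex traversed \emph{off} its matching edge (so that the only compatibility constraints created at that auxiliary super-vertex are ordinary single-vertex pair constraints, hence $\le\mu m$ forbidden choices each), and its now-unvisited partner is re-inserted later as an ordinary internal vertex of some connecting segment (possible because $\{a_h,b_{h'}\}\in E(G)$ for all but $O(\beta m)$ pairs of helpers). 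This restores a non-poor endpoint and costs $O(1)$ extra vertices. Counting as above, at most $40\mu m$ required super-vertices trigger this, so the overhead is negligible; together with an $O(1)$ overhead for starting the path at the first super-vertex, the final path has at most $6\gamma m+O(\mu m)\le 10\gamma m-9$ vertices and is proper and compatible, which proves (i).

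For (ii), start from a proper compatible path $P=(v_0,\dots,v_\ell)$ produced as above, whose endpoints are not poor; the only smoothness conditions that may still fail are that an endpoint be $8\sqrt\mu$-good for $P$ and that $v_0\notin X_A$, $v_\ell\notin X_B$. Fix the end $v_0$ and do one helper hop, obtaining $P'=(f(c),c,v_0,\dots,v_\ell)$ with $c$ a good neighbour of $v_0$, $\{v_0,c\}$ also compatible with $\{v_0,v_1\}$, and $f(c)\notin X_A\cup X_B$. Prepending this super-vertex changes path-edges only at $v_0$, so the number of bad neighbours of $f(c)$ in $P'$ exceeds that in $P$ by at most one; and by Proposition~\ref{prop:bound_bad_correlated}(i) (with $n=2m$) at most $2\sqrt\mu m$ vertices are $2\sqrt\mu$-bad for $P$. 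Hence for all but at most $2\sqrt\mu m$ of the (at least $m/20-O(\sqrt\mu m)$) admissible choices of $c$, the vertex $f(c)$ has fewer than $2\sqrt\mu|P|+1<8\sqrt\mu|P'|$ bad neighbours in $P'$ (using that $|P|\ge 2\gamma m$ is large), so $P'$ is smooth at that endpoint. Performing the same surgery at $v_\ell$ gives a proper smooth path with at most $(10\gamma m-9)+4\le 10\gamma m$ vertices, proving (ii). The main difficulty throughout is the treatment of the poor endpoints in (i); everything else is a routine greedy argument in the spirit of the counting underlying Lemma~\ref{lem:rotation_1}.
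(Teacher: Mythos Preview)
Your greedy strategy for (i) is in the same spirit as the paper's, but your treatment of the ``poor'' case is not a proof. You write that when an endpoint of $S_{i+1}$ is poor, ``the path-neighbour \ldots\ is taken to be an auxiliary super-vertex traversed off its matching edge \ldots\ and its now-unvisited partner is re-inserted later as an ordinary internal vertex of some connecting segment.'' Neither step is specified: you do not say which edges form the off-matching traversal, where the orphaned partner goes, or why those insertions are compatible and keep $f(V(P)\cap A)=V(P)\cap B$. Deferring the re-insertion ``to some later connecting segment'' is particularly dangerous, since that later segment has its own compatibility constraints and may also be handling a poor super-vertex. Your vertex accounting also does not close: you claim $6\gamma m + O(\mu m)\le 10\gamma m - 9$, but the hypotheses only give $\gamma+\beta+\sqrt\mu<1/2000$, with no bound on $\mu/\gamma$; when $\mu\ge \gamma/20$ every required super-vertex could have a poor endpoint, so the ``overhead'' is $\Theta(\gamma m)$ with an unspecified constant.

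The paper sidesteps all of this by using a \emph{uniform} gadget at every step, of the form
\[
P_{t+1}=(P_t,\ b,\ a_i,\ b_k,\ a_k,\ b_i,\ a_j,\ b_\ell,\ a_\ell,\ b_j,\ a_{t+1},\ b_{t+1}),
\]
which always splits two auxiliary matched pairs $\{a_i,b_i\}$ and $\{a_j,b_j\}$ inside the same ten-vertex block. Because the vertex immediately preceding $a_{t+1}$ is $b_j$, entered from $a_\ell$ (not from its own partner $a_j$), the only compatibility constraints at $a_{t+1}$ and at $b_j$ are single $\mu m$-bounded ones; no ``good neighbour of $a_{t+1}$'' condition is ever needed, so there is no poor case to worry about. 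This also gives the exact vertex count $10t-9$ directly. Your part (ii) then inherits the gap from (i): your two-vertex helper-hop at each end requires the endpoint $v_0$ to have $\ge m/20$ good neighbours, which you only know via the unproven invariant. The paper instead appends four vertices $(b',a_i,b_i,a')$ at each end, choosing $b'\in N(a)$ with $b'\notin X_B\cup Y_B$ and $a'=f(b')\notin X_A\cup Y_A$; this works for an arbitrary endpoint $a$ because it only uses $|N(a)|\ge m/10$, not any good-neighbour count of $a$.
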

\begin{proof}
Let $Y_A \subseteq A$ be the set of vertices in $A$ of degree less than
$\frac{3}{4}m$. Since
\[
	\frac{m}{4} \cdot |Y_A| \le e_{G^c}(A,B) \le \beta m^2,
\]
we see that $|Y_A| \le 4\beta m$.
Similarly define $Y_B \subseteq B$, and we get $|Y_B| \le 4\beta m$.

\medskip

\noindent (i) We prove the following statement
for $t=1,\ldots, \gamma m$ using induction on $t$: there exists a compatible path $P_t$ 
of length at most $10t - 9$ containing the edges $e_1, \ldots, e_t$
and satisfying $f(V(P_t) \cap A) = V(P_t) \cap B$.
The statement is trivially true for $t=1$. Suppose that we are
given a compatible path $P_t$ as above.
We may assume that $P_t$ does not contain $e_{t+1} = \{a_{t+1}, b_{t+1}\}$
as otherwise the induction step trivially holds.

Let $b \in B$ be an endpoint of $P_t$, and let $\{a,b\}$ be an edge of $P_t$
incident to $b$.
Let $A_1 \subseteq A$ be the neighbors $a_i$ of $b$
with the following properties:
(i) $\{b, a_i\}$ is compatible with $\{a,b\}$,
(ii) $a_i \notin X_A \cup Y_A$ and $b_i = f(a_i) \notin X_B \cup Y_B$, and
(iii) $a_i, b_i \notin V(P_t)\cup \{a_{t+1}, b_{t+1}\}$.
Note that
\begin{align*}
	|A_1|
	&\ge
	|N(b)| - \mu m - (|X_A| + |X_B|) - (|Y_A| + |Y_B|) - (|V(P_t)| + 2)  \\
	&\ge
	\frac{1}{10} m - \mu m- 2\sqrt{\mu}m - 8\beta m  - 10\gamma m
	> \frac{1}{11}m.
\end{align*}
Let $B_1 \subseteq B$ be the neighbors $b_j$ of $a_{t+1}$
with the following properties:
(i) $\{a_{t+1}, b_j\}$ is compatible with $e_{t+1}$,
(ii) $a_j \notin X_A \cup Y_A$ and $b_j \notin X_B \cup Y_B$, an
(iii) $a_j, b_j \notin V(P_t) \cup \{a_{t+1}, b_{t+1}\}$. 
A computation similar to above shows that $|B_1| > \frac{1}{11}m$.

Since $e(A,B) \ge m^2 - \beta m^2 > m^2 - (|A_1||B_1| - m)$,
there exists an edge
$\{b_i, a_j\}$ such that $b_i \in f(A_1), a_j \in f(B_1)$ and $i \neq j$.
Our goal is to find two indices $k$ and $\ell$ for which
the path 
$$P_{t+1} = (P, b, a_i, b_k, a_k, b_i, a_j, b_\ell, a_\ell, b_j, a_{t+1}, b_{t+1})$$
is compatible.
By the definitions of $b_i$ and $a_j$,
it suffices to show the existence of distinct indices $k$ and $\ell$ for which
the path $(b, a_i, b_k, a_k, b_i, a_j)$ and the path
$(b_i, a_j, b_\ell, a_\ell, b_j, a_{t+1})$ are both compatible.

The compatibility of the path $(b, a_i, b_k, a_k, b_i, a_j)$ depends
only on the index $k$. We must first have $a_i$ adjacent to $b_k$
and $b_i$ adjacent to $a_k$, and avoid having $a_k,b_k$ in the
set $V(P_t)$ or in $\{a_i, b_i, a_j, b_j, a_{t+1}, b_{t+1}\}$.
Since $a_i \notin Y_A$ and $b_i \notin Y_B$, the number of possible indices satisfying
the restriction is at least
\[
	\frac{3}{4}m + \frac{3}{4}m - m - (|V(P_t)| + 6)
	\ge
	\frac{1}{2}m - 10\gamma m - 6.
\]
Moreover, since $a_i \notin X_A$ and $b_i \notin X_B$, 
the compatibility of the pairs of edges further forbid
$2\sqrt{\mu}m + 2\mu m$ indices $k$.
Thus we can find an index $k$ for which the path $(b, a_i, b_k, a_k, b_i, a_j)$
is compatible. Similarly, we can find an index $\ell \neq k$ for which
the path $(b_i, a_j, b_\ell, a_\ell, b_j, a_{t+1})$ is
compatible. Note that for this choice of $k$ and $\ell$,
the path $P_{t+1}$ satisfies $|P_{t+1}| = |P_t| + 10$ and
$f(V(P_{t+1}) \cap A) = V(P_{t+1}) \cap B$. This completes the proof
of the inductive step.

\medskip

\noindent (ii) By part (i), there exists a
proper compatible path $P$ with $|V(P)| \le 10\gamma m - 9$.
Let $a$ and $b$ be the two endpoints of $P$,
where $a \in A$ and $b \in B$. Let $B'$ be
the set of vertices $x \in N(a) \setminus (V(P) \cup X_B \cup Y_B)$ that are
(i) connected to $a$ by an edge compatible
with the edge incident to $a$ in $P$, for which
(ii) $f(x) \notin X_A \cup Y_A$, and (iii) $f(x)$ is $2\sqrt{\mu}$-good for $P$.
By Proposition~\ref{prop:bound_bad_correlated} (with $n = 2m$),
\begin{align*}
	|B'|
	&\ge
	|N(a)| - (|V(P)| + |X_B| + |Y_B|) - \mu m - (|X_A| + |Y_A|) - 2\sqrt{\mu} m \\	
	&\ge
	\frac{m}{10} - (10\gamma m + \sqrt{\mu} m + 4\beta m) - \mu m - (4\beta m + \sqrt{\mu}m) - 2\sqrt{\mu}m
	> 0.
\end{align*}
Let $b'$ be a vertex in $B'$, and let $a' = f(b')$.
Let $I$ be the set of indices $i$ such that
$(b, P, a, b', a_i, b_i, a')$ is
a compatible path. Since $b' \notin X_B \cup Y_B$ and $a' \notin X_A \cup Y_A$, we have
\[
	|I| \ge \frac{3}{4}m + \frac{3}{4}m - m - 2\sqrt{\mu}m - \mu m > 0.
\]
Fix an arbitrary index $i \in I$. 
Similarly as above,
let $A'$ be the set of vertices $y \in N(b) \setminus (V(P) \cup X_A \cup Y_A \cup \{a,a'\})$ that are (i) connected
to $b$ by an edge compatible with the edge incident to $b$ in $P$ and
for which (ii) $f(y) \notin X_B \cup Y_B$, and
(iii) $f(y)$ is $2\sqrt{\mu}$-good for $P$.
By Proposition \ref{prop:bound_bad_correlated} (with $n = 2m$), we have
\[
	|A'| \ge |N(b)| - (|V(P)| + |X_A| + |Y_A| + 2) - \mu m - (|X_B| + |Y_B|) - 2\sqrt{\mu} m > 0.
\]
Let $a''$ be a vertex in $A'$, and let $b'' = f(a'')$.
Similarly as above, we can find an index $j \neq i$ such that
the path $P' = (b'', a_j, b_j, a'', b, P, a, b', a_i, b_i, a')$
is compatible and proper. 
To show that $P'$ is smooth, it suffices to show that the two
endpoints are $8\sqrt{\mu}$-good for $P'$. This easily follows
from the fact that $a'$ and $b''$ are $2\sqrt{\mu}$-good for $P$
and that $|V(P') \setminus V(P)| = 8$ (note that $|V(P)| \ge \gamma m$).
\end{proof}

The following lemma is a variant of Lemmas \ref{lem:rotation_2} and
\ref{lem:rotation_2_mod}.

\begin{lem} \label{lem:rotation_almost_bipartite}
Let $G$ be given as above.
For every proper smooth path $P$, there exists a proper compatible cycle $C$
satisfying the following properties:
\begin{itemize}
  \setlength{\itemsep}{0pt} \setlength{\parskip}{0pt}
  \setlength{\parsep}{0pt}
\item $C$ has length $|C| \ge (\frac{19}{10} - 50\sqrt{\mu} - 4\gamma)m$,
\item $V(C) \supseteq V(P)$, and
\item $|E(C) \setminus E(P)| \le \frac{3}{2}|V(C) \setminus V(P)| + 3$.
\end{itemize}
\end{lem}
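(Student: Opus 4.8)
The plan is to follow the proof of Lemma~\ref{lem:rotation_2_mod} essentially verbatim, substituting the bipartite analogues of its ingredients: Lemma~\ref{lem:rotation_bipartite} in place of Lemma~\ref{lem:rotation_1}, and ``high degree in $A$ (resp. $B$)'' in place of the set $L$. As there, it suffices to prove a single improvement step: given a proper smooth path $P=(v_0,\dots,v_\ell)$, either there is a proper smooth path $P'$ with $|V(P')|\ge|V(P)|+2$ and $|E(P')\setminus E(P)|\le 3$, or there is a proper compatible cycle $C$ with $|C|\ge(\tfrac{19}{10}-50\sqrt{\mu}-4\gamma)m$, $V(C)\supseteq V(P)$, and $|E(C)\setminus E(P)|\le 3$. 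Iterating (each extension adds exactly two vertices and at most three edges, the final closing at most three more) then yields $|E(C)\setminus E(P)|\le\tfrac32|V(C)\setminus V(P)|+3$; non-vacuousness of the iteration is guaranteed by Proposition~\ref{prop:proper_exist}. So I would assume from now on that no proper smooth path is obtainable from $P$ or from any single rotation of $P$ by a ``prepend'' $x\mapsto(f(x),x,\dots)$, and construct the cycle.

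First I would set $L_A=\{a\in A:d(a)\ge\tfrac{19}{20}m\}$ and, symmetrically, $L_B$; since $e_{G^c}(A,B)\le\beta m^2$, each of $|A\setminus L_A|$ and $|B\setminus L_B|$ is at most $20\beta m$. Because the prepend at $v_0$ fails, Lemma~\ref{lem:rotation_bipartite} applies to $P$ and produces admissible pivots in a set of size at least $d(v_0)-(25\sqrt{\mu}+2\gamma)m\ge\tfrac{m}{10}-(25\sqrt{\mu}+2\gamma)m>20\beta m$, so I may select a pivot whose rotated endpoint lies in $L_A$. Call the resulting proper smooth path $P'=(w_0,\dots,w_\ell)$, so $w_0\in L_A$ and $|E(P')\setminus E(P)|\le 1$; by the standing assumption $P'$ is again non-extendable. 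The crucial point is that non-extendability of $P'$ at $w_0$, read off from the proof of Lemma~\ref{lem:rotation_bipartite}, forces $N(w_0)\subseteq V(P')\cup B_0\cup B_1\cup f(B_2)$ with $|B_0\cup B_1\cup f(B_2)|\le 5\sqrt{\mu}m+2\mu m$; since $N(w_0)\subseteq B$ and $P'$ is proper, $|V(P')|=2|V(P')\cap B|\ge 2\big(d(w_0)-5\sqrt{\mu}m-2\mu m\big)\ge(\tfrac{19}{10}-O(\sqrt{\mu}))m$. (If $|V(P')|=2m$ then the cycle produced below is already Hamiltonian.) This is exactly where the constant $\tfrac{19}{10}=2\cdot\tfrac{19}{20}$ comes from.

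Next I would apply Lemma~\ref{lem:rotation_bipartite} to $P'$ and, in the $A\leftrightarrow B$ symmetric form, to its reverse. From the $w_0$-end I obtain $Y\subseteq N(w_0)\cap V(P')$ with $|Y|\ge d(w_0)-(25\sqrt{\mu}+2\gamma)m\ge(\tfrac{19}{20}-25\sqrt{\mu}-2\gamma)m$, such that pivoting $P'$ at any $w_i\in Y$ yields a proper smooth path $(w_{i-1},\dots,w_0,w_i,\dots,w_\ell)$. From the $w_\ell$-end I obtain $Z\subseteq N(w_\ell)\cap V(P')$ with $|Z|\ge d(w_\ell)-(25\sqrt{\mu}+2\gamma)m\ge(\tfrac{1}{10}-25\sqrt{\mu}-2\gamma)m$, such that for $w_j\in Z$ the edge $\{w_\ell,w_j\}$ is compatible with both $\{w_{j-1},w_j\}$ and $\{w_\ell,w_{\ell-1}\}$ — $Z$ serves only as an analytic device, no rotation being performed at $w_\ell$, which is what keeps the edge budget at three. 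Since $w_0\in A$ and $w_\ell\in B$, both $Y$ and the shift $\{w_{j+1}:w_j\in Z\}$ lie among the odd-indexed vertices of $P'$, of which there are at most $|V(P')|/2\le m$; as $|Y|+|Z|\ge(\tfrac{21}{20}-50\sqrt{\mu}-4\gamma)m>m+O(1)$ (using $\gamma+\beta+\sqrt{\mu}<\tfrac1{2000}$), there is an odd index $i$ with $w_i\in Y$ and $w_{i-1}\in Z$, and — using the exclusions built into $Y$ and $Z$ — with $w_{i-1},w_i$ disjoint from the matching edges $e_1,\dots,e_{\gamma m}$. Pivoting $P'$ at $w_i$ and then closing with $\{w_{i-1},w_\ell\}$ produces a proper compatible cycle $C$ on $V(P')$ with $|E(C)\setminus E(P)|\le 1+1+1=3$ and $|C|=|V(P')|\ge(\tfrac{19}{10}-50\sqrt{\mu}-4\gamma)m$, completing the improvement step.

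I expect the main obstacle to be bookkeeping rather than a new idea: one must simultaneously keep the path \emph{proper} (no rotation or closing edge may delete any of $e_1,\dots,e_{\gamma m}$ and the identity $f(V\cap A)=V\cap B$ must persist — both automatic for rotations via Lemma~\ref{lem:rotation_bipartite} and trivial for the closing edge), respect the parity of indices along the bipartite path (which is precisely why $Y$ and the shift of $Z$ occupy the same residue class and the pigeonhole closes), and — most delicately — stay within the budget of three new edges, which hinges on performing only a \emph{single} genuine rotation (to create the high-degree endpoint $w_0\in L_A$) and treating the $w_\ell$-end purely through the auxiliary set $Z$.
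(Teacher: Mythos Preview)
Your proposal is correct and follows essentially the same approach as the paper's own proof: rotate once to place a high-degree vertex (from $L_A$) at one endpoint, then apply Lemma~\ref{lem:rotation_bipartite} at both ends of the resulting path to obtain the sets $Y$ and $Z$, and use the bipartite parity pigeonhole $|Y|+|Z|>m$ to find consecutive $w_{i-1}\in Z$, $w_i\in Y$ for closing the cycle with three new edges. The paper derives the length bound as $|C|\ge 2|Y|$ at the end rather than bounding $|V(P')|$ directly via non-extendability as you do, but the two arguments are equivalent.
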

\begin{proof}
Let $G$ be a given graph.
Let $P = (v_0, \ldots, v_\ell)$ be a proper smooth path in $G$, where $v_0 \in A$ and $v_\ell \in B$. It suffices to prove
that either there exists a proper smooth path $P'$ with
$|P'| \ge |P|+2$ and $|E(P') \setminus E(P)| \le 3$, or
a proper compatible cycle $C$ with $|C|\ge (\frac{19}{10} - 50\sqrt{\mu} - 4\gamma)m$ 
and $|E(C) \setminus E(P)| \le 3$.
Since then, we can repeatedly find a longer path to eventually
find a cycle with the claimed properties.
Assume that the former event does not occur.

Let $L_A \subseteq A$ be the
set of vertices in $A$ that have degree at least $\frac{19}{20}m$ in $G$,
and note that
\[
	\frac{1}{20}m \cdot |V \setminus L_A| \le e_{G^c}(A, B) \le \beta m^2.
\]
Hence $|L_A| \ge m - 20\beta m$. By Lemma \ref{lem:rotation_bipartite}, there exists
a set $X \subset N(v_0) \cap V(P)$ of size
\[
	|X| \ge d(v_0) - 25\sqrt{\mu}m  - 2\gamma m > 20 \beta m
\]
such that for all vertices $v_i \in X$,
the path $(v_{i-1}, \ldots, v_0, v_i, v_{i+1}, \ldots, v_\ell)$ is proper and smooth.
Since $|X| > m - |L_A|$, there exists a vertex $v_i \in X$ for which $v_{i-1} \in L_A$.
Let $P' = (w_0, w_1, \ldots, w_\ell)$ be the proper smooth path obtained by taking
$v_i$ as a pivot point (where $w_0 \in L_A$ and $w_\ell \in B$).

By our assumption on $P$, we know that $P'$ cannot be extended into
a longer proper smooth path by adding at most two edges.
Hence by Lemma \ref{lem:rotation_bipartite}, since $w_0 \in L_A$,
there exists a set $Y \subset N(w_0) \cap V(P')$ of size
$|Y| \ge \frac{19}{20}m - (25\sqrt{\mu} + 2\gamma)m$ such that for all vertices $w_i \in Y$,
the path $(w_{i-1}, \ldots, w_0, w_i, w_{i+1}, \ldots, w_\ell)$ is proper and smooth.
If $w_\ell \in Y$, then we immediately find a cycle with the claimed properties, 
and hence we may assume that $w_\ell \notin Y$. 
Then for each vertex $w_i \in Y$, we see that the
edge $\{w_0, w_i\}$ is compatible with both
$\{w_{0}, w_{1}\}$ and $\{w_{i}, w_{i+1}\}$, and that $\{w_{i-1}, w_{i}\}$ is
not one of the edges $e_1, \ldots, e_{\gamma m}$.
Similarly, there exists a set $Z \subset N(w_\ell) \cap V(P)$ of size
$|Z| \ge \delta(G) - (25\sqrt{\mu}+ 2\gamma)m$ such that for all vertices $w_j \in Z$,
the path $(w_0, w_1, \ldots, w_j, w_\ell, w_{\ell-1}, \ldots, w_{j+1})$ is proper and smooth.
In particular, for each vertex $w_j \in Z$, we see that the
edge $\{w_j, w_\ell\}$ is compatible with both
$\{w_\ell, w_{\ell-1}\}$ and $\{w_j, w_{j-1}\}$.

Since $G$ is a bipartite graph, $w_0$ is adjacent only to
vertices $w_i$ with odd index $i$, and $w_\ell$ is adjacent only to vertices
$w_i$ with even index $i$. Therefore, since
\[
	|Y| + |Z| \ge \Big(\frac{19}{20}m - (25\sqrt{\mu}+2\gamma)m\Big) + \Big(\frac{m}{10} - (25\sqrt{\mu}+2\gamma)m\Big) > m,
\]
there exists an index $i$
such that $w_{i-1} \in Z$ and $w_i \in Y$. For this index $i$, the cycle
$C = (w_{i-1}, \ldots, w_0, w_i, w_{i+1}, \ldots, w_\ell, w_{i-1})$
is a proper cycle compatible with $\mathcal{F}$. Also,
\[
	|C| \ge 2|Y|
	\ge
	2 \cdot \Big( \frac{19}{20}m - (25\sqrt{\mu}+2\gamma)m\Big).
\]
Moreover, $C$ is obtained from $P$ by adding at most three extra edges.
\end{proof}

We now present the proof of Theorem \ref{thm:almost_bipartite},
which asserts the existence of
a proper Hamilton cycle compatible with $\mathcal{F}$.

\begin{proof}[Proof of Theorem \ref{thm:almost_bipartite}]
Let $G$ be a graph satisfying the conditions given
in Theorem \ref{thm:almost_bipartite}.
Let $C=(v_0, v_1, \ldots, v_\ell)$ be a proper cycle
in $G$ compatible with $\mathcal{F}$, of maximum length.
The existence of such a cycle follows from Proposition \ref{prop:proper_exist}
and Lemma \ref{lem:rotation_almost_bipartite}. Moreover, Lemma \ref{lem:rotation_almost_bipartite} 
shows that $|C| \ge (\frac{19}{10} - (50\sqrt{\mu}+4\gamma))m$.
Throughout the proof, for a set $X \subset V$,
define $X^+ = \{ v_{i+1} \,:\, v_i \in X \cap V(C)\}$, and
$X^- =  \{ v_{i-1} \,:\, v_i \in X \cap V(C)\}$ (where index
addition and subtraction are modulo $\ell+1$).
If $C$ is a Hamilton cycle, then
we are done. Otherwise, there exists an edge $e$ in the matching,
not intersecting the cycle.

Define $B_0$ as the set of vertices that are $2\sqrt{\mu}$-bad
for $C$. By Proposition \ref{prop:bound_bad_correlated} (with $n = 2m$),
we know that $|B_0| \le 2\sqrt{\mu}m$. Hence if $|C| < (2-4\sqrt{\mu})m$,
then we may take an edge $e = \{a,b\}$ in the matching
so that $a,b \notin B_0$. In this case,
define $B_a$ as the set of bad neighbors of $a$, and $B_b$ as the
set of bad neighbors of $b$. By definition,
we have $|B_a|, |B_b| \le 2\sqrt{\mu}|C|$.
Otherwise if $|C| \ge (2-4\sqrt{\mu})m$, then let $e = \{a,b\}$ be an
arbitrary edge of the matching not intersecting $C$,
and define $B_a = B_b = \emptyset$ (in both cases,
we assume that $a \in A$ and $b \in B$).

Since $G$ is a balanced bipartite graph, we have
\[
	|N(a) \cap V(C)| \ge d(a) + \frac{1}{2}|V(C)| - m
	\ge
	\frac{1}{10}m + \frac{1}{2}\Big(\frac{19}{10} - (50\sqrt{\mu}+4\gamma)\Big)m - m
	\ge
	(3\gamma + 14\sqrt{\mu})m.
\]
Define $T_a = \{ v_i \in N(a) \cap V(C) \,:\, v_{i-1}, v_{i+1} \notin X_A \cup B_0, v_i \notin X_B \cup B_0 \cup B_a \}$, and note that since
\[
	2(|X_A| + |B_0|) + |X_B| + |B_0| + |B_a| \le 13\sqrt{\mu}m,
\]
we have $|T_a| \ge (3\gamma + \sqrt{\mu})m$.
We can similarly define a set $T_b \subseteq N(b) \cap V(C)$ of size $|T_b| \ge (3\gamma + \sqrt{\mu})m$.

Take a vertex $v_i \in T_a$ not incident to $e_1, \ldots, e_{\gamma m}$,
for which the pair of edges
$\{a,v_i\}$ and $\{a,b\}$ is compatible (such vertex exists since
$|T_a| \ge (3\gamma + \sqrt{\mu})m$ and $\mathcal{F}$ is $\mu m$-bounded).
Then similarly take a vertex $v_j \in T_b$ not incident to $e_1, \ldots, e_{\gamma m}$,
for which the pair of edges
$\{a, b\}$ and $\{b, v_j\}$ is compatible and $v_j \neq v_{i-1}, v_{i+1}$.
Consider the path
$P = (v_{i+1}, v_{i+2}, \ldots, v_j, b, a, v_i, v_{i-1}, \ldots, v_{j+1})$.
First, $P$ is a proper path since $v_i$ and $v_j$ are not incident to
$e_1, \ldots, e_{\gamma m}$.
Second, $v_{i+1} \notin X_A$ and $v_{j+1} \notin X_B$ by the definitions
of $T_a$ and $T_b$,
and third, both $v_{i+1}$ and $v_{j+1}$ have at most
$2\sqrt{\mu}|C| + 4$ bad neighbors in $P$, since
$v_{i+1}, v_{j+1} \notin B_0$, and the set of bad neighbors
in $P$ and in $C$ can differ only in at most four vertices
$v_j, a,b$, and $v_i$.

To check if $P$ is compatible, it suffices to check the
compatibility of four pairs
$\Big(\{v_{j-1}, v_j\}, \{v_j, b\}\Big)$,
$\Big(\{v_j, b\}, \{b, a\}\Big)$,
$\Big(\{b, a\}, \{a, v_i\}\Big)$, and
$\Big(\{a, v_i\}, \{v_i, v_{i-1}\}\Big)$.
The two pairs of edges
$\Big(\{v_j, b\}, \{b, a\}\Big)$ and
$\Big(\{b, a\}, \{a, v_i\}\Big)$ are both compatible by our choice
of $v_i$ and $v_j$.
If $|C| < (2-4\sqrt{\mu})m$, then by the choice of $e$ and
of the sets $B_a, B_b$, since $v_i \notin B_a$ and $v_j \notin B_b$,
we further see that the other pairs of edges are both compatible,
thus implying that $P$ is compatible, and is therefore smooth.
This by Lemma \ref{lem:rotation_almost_bipartite}
gives a proper compatible cycle longer than $C$ and contradicts
the maximality of $C$.
Therefore, we must have $|C| \ge (2-4\sqrt{\mu})m$.

In this case, $P$ is `almost' smoothly compatible with $\mathcal{F}$,
in the sense that it satisfies all the conditions except for possibly the
compatibility of two pairs of edges.
Define $\mathcal{F}_1$ as the incompatibility system obtained from
$\mathcal{F}$ by making the pairs of edges
$\Big(\{v_{j-1}, v_j\}, \{v_j, b\}\Big)$ and
$\Big(\{a, v_i\}, \{v_i, v_{i-1}\}\Big)$ to be compatible.
Note that $P$ is smoothly compatible with $\mathcal{F}_1$.
Hence by Lemma \ref{lem:rotation_almost_bipartite}, we can find a proper cycle
$C_1$ compatible with $\mathcal{F}_1$, with
$V(C_1) \supseteq V(P) \supseteq V(C)$ and 
\[
	|E(C_1) \setminus E(C)|
	\le
	|E(C_1) \setminus E(P)| + 3
	\le
	\frac{3}{2}|V(C_1) \setminus V(C)| + 6.
\]

Let $P_1$ be the path obtained from $C_1$ by removing the
edge $\{v_{i-1}, v_i\}$ if it is in $C_1$ (if not, then skip this
paragraph). Note that $P_1$ is a proper
path. We claim that $P_1$ is smoothly compatible with $\mathcal{F}_1$.
First, it is compatible with $\mathcal{F}_1$, since $C_1$ is.
Second, $v_{i-1} \notin X_A$ and $v_i \notin X_B$ since
$v_i \in T_a$.
Third, since $v_i \notin B_0$, we know that
$v_i$ is $2\sqrt{\mu}$-good for $C$. Since $V(C_1) \supseteq V(C)$
and $G$ is bipartite, it follows that $v_j$ has at most
\[
	2\sqrt{\mu}|C| + |E(C_1) \setminus E(C)| \le 2\sqrt{\mu}|C| + \frac{3}{2}(|C_1|-|C|) + 6 \le 8\sqrt{\mu} |C_1| = 8\sqrt{\mu}|P_1|
\]
bad neighbors in $P_1$, where the second inequality follows from
$|C| \ge (2-4\sqrt{\mu})m$. A similar estimate holds for the other endpoint $v_{i-1}$.
Let $\mathcal{F}_2$ be the incompatibility system obtained from
$\mathcal{F}$ by making the pair $\Big(\{a, v_j\}, \{v_j, v_{j-1}\}\Big)$ to be compatible.
Note that $P_1$ is compatible with $\mathcal{F}_2$,
since $P_1$ is compatible with $\mathcal{F}_1$ and does not
contain the edge $\{v_{j-1}, v_j\}$.
By Lemma \ref{lem:rotation_almost_bipartite}, there exists a proper cycle $C_2$
compatible with $\mathcal{F}_2$ whose vertex set contains $V(C) \cup \{a,b\}$. 

Let $P_2$ be the path obtained from $C_2$
by removing the edge $\{v_i, v_{i-1}\}$ if it is in $C_2$. 
An argument similar to above shows that $P_2$ is a proper path
smoothly compatible with $\mathcal{F}$. By Lemma \ref{lem:rotation_2},
we can find a proper cycle compatible with $\mathcal{F}$ whose vertex set
contains $V(C) \cup \{a, b\}$,
contradicting the maximality of $C$. Therefore, the cycle
$C$ is a Hamilton cycle.
\end{proof}

\section{Concluding remarks} \label{sec:conclusion}

\noindent $\bullet$ We have proven the existence of a constant $\mu>0$ such that the following holds
for large enough $n$: for every $n$-vertex Dirac graph $G$
with a given $\mu n$-bounded incompatibility system $\mathcal{F}$,
there exists a Hamilton cycle in $G$ compatible with $\mathcal{F}$.
The value of $\mu$ that we obtain is quite small ($\mu = 10^{-16}$),
and determining the best possible value of $\mu$ is an interesting open problem
remaining to be solved.
It is not clear what this value should be. The following variant of a
construction of Bollob\'as and Erd\H{o}s \cite{BoEr76}
shows that $\mu$ is at most $\frac{1}{4}$. Let $n$ be an integer of the form
$4k-1$, and let $G$ be an edge-disjoint union of
two $\frac{n+1}{4}$-regular graphs $G_1$ and $G_2$ on the
same $n$-vertex set. Color the edges
of $G_1$ in red, and of $G_2$ in blue. Note that $G$ does not contain
a properly colored Hamilton cycle since a Hamilton cycle of $G$
is of odd length. Let $\mathcal{F}$
be an incompatibility system defined over $G$, where incident edges
of the same color are incompatible. Then there exists a
Hamilton cycle compatible with $\mathcal{F}$ if and only if
there exists a properly colored Hamilton cycle. Since there is no
properly colored Hamilton cycle,
we see that there is no Hamilton cycle compatible with $\mathcal{F}$.

\medskip

\noindent $\bullet$ As mentioned in the introduction, the motivation for our work came
from a conjecture of H\"aggkvist (Conjecture \ref{conj:haggkvist}).
We note that the conjecture can be answered using a result
in \cite{KrLeSu14} that studied
Hamiltonicity Maker-Breaker game played on Dirac graphs. The theorem
proven there asserts the existence of a positive constant $\beta$ such that
Maker has a winning strategy in a $(1:\beta n/\log n)$
Hamiltonicity Maker-Breaker game played on Dirac graphs.
To see how this implies the conjecture, given a graph $G$ and
a $1$-bounded incompatibility system
$\mathcal{F}$, consider a Breaker's strategy
claiming at each turn the edges that are incompatible with
the edge that the Maker claimed in the previous turn; this strategy
forces Maker's graph to be compatible
with $\mathcal{F}$ at all stages. Since Maker has
a winning strategy for a $(1:2)$ game, we see that there exists a
Hamilton cycle compatible with $\mathcal{F}$. This analysis
gives a weaker version of our main theorem asserting the existence
of a compatible Hamilton cycle for every $\frac{1}{2}\beta n/\log n$-bounded
incompatibility system.

\medskip

\noindent $\bullet$
The concept of incompatibility systems appears to provide a new and interesting
take on robustness of graph properties. Further study of how various extremal results
can be strengthened using this notion appears to be a promising direction of research.
For example in the forthcoming paper \cite{KrLeSu_other}, we  show that there exists a constant $\mu>0$
such that with high probability over the choice of a random graph $G=G(n,p)$ with $p \gg \frac{\log n}{n}$,  for
any $\mu np$-bounded system $\mathcal{F}$ over $G$, there is a compatible
Hamilton cycle. This extends classical Hamiltonicity results of random graphs.

\vspace{0.4cm}
\noindent 
{\bf Acknowledgement.} A major part of this work was carried out when Benny Sudakov was visiting Tel Aviv University, Israel. 
He would like to thank the School of Mathematics in Tel Aviv University for hospitality and for creating a stimulating research
environment.

\end{document}